\newcommand{\nbOne}{\mathchoice {\rm 1\mskip-4mu l} {\rm 1\mskip-4mu l}
{\rm 1\mskip-4.5mu l} {\rm 1\mskip-5mu l}}
\newcommand{\ieg}{\left[\hspace{-0.9ex}\left[\hspace{0.5ex}}
\newcommand{\ied}{\hspace{0.5ex} \right]\hspace{-0.9ex}\right]}
\newcommand{\argmax}{\operatornamewithlimits{argmax}}
\newcommand{\relmiddle}[1]{\mathrel{}\middle#1\mathrel{}}
\newcommand{\BlackBox}{\rule{1.5ex}{1.5ex}}  
\newenvironment{proof}{\par\noindent{\bf Proof\ }}{\hfill\BlackBox\\[2mm]}
\newtheorem{definition}{Definition}
\newtheorem{lemma}{Lemma}
\newtheorem{remark}{Remark}
\newtheorem{theorem}{Theorem}
\newtheorem{hypothesis}{Hypothesis}
\long\def\acks#1{\vskip 0.3in\noindent{\large\bf Acknowledgements}
\noindent #1}
\begin{document}
\date{\today}

\begin{titlepage}

\title{$L_p$-norm Sauer-Shelah Lemma for Margin Multi-category Classifiers}

\author{
    Yann Guermeur \\
    LORIA-CNRS \\
    Campus Scientifique, BP~239 \\
    54506 Vand\oe uvre-l\`es-Nancy Cedex, France \\
    (e-mail: {\tt Yann.Guermeur@loria.fr})
  }

\maketitle

\noindent{\bf Running Title}: 
Sauer-Shelah Lemma for Margin Multi-category Classifiers

\noindent{\bf Keywords}:
margin multi-category classifiers, guaranteed risks,
$\epsilon$-entropy, $\gamma$-dimension,
generalized Sauer-Shelah lemmas

\noindent{\bf Mathematics Subject Classification}: 68Q32, 62H30

\thispagestyle{empty}

\end{titlepage}

\begin{abstract}
In the framework of agnostic learning,
one of the main open problems of the theory of multi-category
pattern classification is the characterization of the way the complexity
varies with the number $C$ of categories. More precisely,
if the classifier is characterized only through
minimal learnability hypotheses,
then the optimal dependency on $C$ that an upper bound on
the probability of error should exhibit is unknown.
We consider margin classifiers.
They are based on classes of vector-valued functions
with one component function per category,
and the classes of component functions
are uniform Glivenko-Cantelli classes.
For these classifiers, an $L_p$-norm Sauer-Shelah lemma is established.
It is then used to derive guaranteed risks
in the $L_\infty$ and $L_2$-norms.
These bounds improve over the state-of-the-art ones with respect to
their dependency on $C$, which is sublinear.
\end{abstract}

\section{Introduction}
\label{sec:introduction}

During a long period, the theory of multi-category pattern classification
was considered as a topic of limited importance. Two connected reasons
can be put forward to explain this phenomenon. On the one hand,
the theory dedicated to dichotomies was making rapid strides, on the other hand,
decomposition methods were seen as efficient solutions to tackle polytomies.
An obvious drawback of this line of reasoning is to neglect the specificities
of the multi-category case, such as the dependency of the complexity of the task
on the number $C$ of categories. In recent years, several studies addressed
this question, by deriving upper bounds on the probability of error
of multi-category classifiers, especially margin ones. However, most of
these {\em guaranteed risks}
were dedicated to specific families of classifiers, let them be kernel machines
\cite{Zha04,LeiDogBinKlo15}, neural networks \cite{AntBar99},
decision trees \cite{KuzMohSye14}
or nearest neighbors classifiers \cite{KonWei14}.
This article deals with margin classifiers.
They are based on classes of vector-valued functions
with one component function per category,
and the classes of component functions
are uniform Glivenko-Cantelli classes.
For these classifiers, an $L_p$-norm Sauer-Shelah lemma is established.
It is then used to derive guaranteed risks
in the $L_\infty$ and $L_2$-norms.
These bounds improve over the state-of-the-art ones with respect to
their dependency on $C$, which is sublinear.
Thus, they pave the way for the characterization of
the optimal dependency on $C$ that could be obtained in the framework
of agnostic learning, under minimal learnability/measurability
hypotheses regarding the classes of functions involved.

The organization of the paper is as follows.
Section~\ref{sec:margin-multi-category-classifiers}
deals with the theoretical framework
and the margin multi-category classifiers.
Section~\ref{sec:L_p-norm Sauer-Shelah Lemma} is devoted to the derivation
of the $L_p$-norm Sauer-Shelah lemma.
The bound based on the $L_\infty$-norm and that based on the $L_2$-norm
are respectively established in
Section~\ref{sec:L_infty-bound} and Section~\ref{sec:L_2-bound}.
At last, we draw conclusions and outline our ongoing research
in Section~\ref{sec:conclusions}. To make reading easier, 
basic results from the literature and technical lemmas
have been gathered in appendix.

\section{Margin multi-category classifiers}
\label{sec:margin-multi-category-classifiers}

The theoretical framework for the margin multi-category classifiers
has been introduced in \cite{Gue07b}. It is summarized below.

\subsection{Theoretical framework}

We consider the case of $C$-category pattern classification problems
\cite{DevGyoLug96} with $C \in \mathbb{N} \setminus \ieg 0, 2 \ied$.
Each object is represented by its description $x \in \mathcal{X}$ and the
set $\mathcal{Y}$ of the categories $y$ can be identified with the
set of indices of the categories: $\ieg 1, C \ied$.
We assume that $\left ( \mathcal{X}, \mathcal{A}_{\mathcal{X}} \right )$ and
$\left ( \mathcal{Y}, \mathcal{A}_{\mathcal{Y}} \right )$ 
are measurable spaces
and denote by $\mathcal{A}_\mathcal{X} \otimes \mathcal{A}_\mathcal{Y}$ 
the tensor-product sigma algebra
on the Cartesian product $ \mathcal{X} \times \mathcal{Y}$.
We make the hypothesis that the link between
descriptions and categories can be characterized by an unknown probability
measure $P$ on the measurable space
$\left ( \mathcal{X} \times \mathcal{Y}, 
\mathcal{A}_{\mathcal{X}} \otimes \mathcal{A}_{\mathcal{Y}} \right )$.
Let $Z = \left ( X,Y \right )$ be a random pair
with values in $\mathcal{Z} = \mathcal{X} \times \mathcal{Y}$,
distributed according to $P$.
The single knowledge source on $P$ available is an $m$-sample
$\mathbf{Z}_m = \left( Z_i  \right)_{1 \leqslant i \leqslant m} =
\left( \left ( X_i, Y_ i \right) \right)_{1 \leqslant i \leqslant m}$
made up of independent copies of $Z$ (in short $\mathbf{Z}_m \sim P^m$).
The theoretical framework is thus that of {\em agnostic learning} 
\cite{KeaSchSel94}. To simplify reasoning, in the sequel, 
the hypothesis $m > C$ is made.

We add an hypothesis to that framework: the fact that the classifiers
considered are based on classes of vector-valued functions 
with one component function per category,
and the classes of component functions are {\em uniform Glivenko-Cantelli}.
The definition of this property calls for the introduction of
an intermediate definition.

\begin{definition}[Empirical probability measure]
\label{def:empirical-probability-measure}
Let $\left ( \mathcal{T}, \mathcal{A}_{\mathcal{T}} \right )$
be a measurable space and 
let $T$ be a random variable with values in $\mathcal{T}$,
distributed according to a probability measure $P_T$
on $\left ( \mathcal{T}, \mathcal{A}_{\mathcal{T}} \right )$.
For $n \in \mathbb{N}^*$,
let $\mathbf{T}_n = \left( T_i  \right)_{1 \leqslant i \leqslant n}$
be an $n$-sample made up of independent copies of $T$.
The empirical measure supported on this sample, $P_{\mathbf{T}_n}$, is given by
$$
P_{\mathbf{T}_n} = \frac{1}{n} \sum_{i=1}^n \delta_{T_i},
$$
where $\delta_{T_i}$ denotes the Dirac measure centered on $T_i$.
\end{definition}

\begin{definition}[Uniform Glivenko-Cantelli class \cite{DudGinZin91}]
Let the probability measures $P_T$ and $P_{\mathbf{T}_n}$ be defined as 
in Definition~\ref{def:empirical-probability-measure}.
Let $\mathcal{F}$ be a class of measurable functions on $\mathcal{T}$.
Then $\mathcal{F}$ is a {\em uniform Glivenko-Cantelli class}
if for every $\epsilon \in \mathbb{R}_+^*$,
$$
\lim_{n \longrightarrow + \infty} \sup_{P_T} \;
\mathbb{P} \left ( \sup_{n' \geqslant n} \sup_{f \in \mathcal{F}}
\left |
\mathbb{E}_{T' \sim P_{\mathbf{T}_{n'}}} \left  [ f \left ( T' \right ) \right ]
- \mathbb{E}_{T \sim P_T} \left  [ f \left ( T \right ) \right ]
\right | > \epsilon \right ) = 0,
$$
where $\mathbb{P}$ denotes the infinite product measure $P_T^{\infty}$.
\end{definition}
Henceforth, we shall refer to uniform Glivenko-Cantelli classes
by the abbreviation {\em GC classes}. GC classes must be uniformly bounded
up to additive constants 
(see for instance Proposition~4 in \cite{DudGinZin91}).
For notational convenience, we replace this property by a stronger one:
the vector-valued functions take their values in a hypercube of
$\mathbb{R}^C$. The definition of a margin multi-category classifier
is thus the following one.

\begin{definition}[Margin multi-category classifiers]
\label{def:margin-multi-category-classifiers}
Let $\mathcal{G} = \prod_{k=1}^C \mathcal{G}_k$ be a class of
functions from $\mathcal{X}$ into
$\left [-M_{\mathcal{G}}, M_{\mathcal{G}} \right ]^{C}$
with $M_{\mathcal{G}} \in \left [ 1, +\infty \right )$.
The classes $\mathcal{G}_k$ of component functions are supposed to be 
GC classes.
For each function $g = \left ( g_{k} \right )_{1 \leqslant k \leqslant C}
\in \mathcal{G}$, a {\em margin multi-category classifier} on
$\mathcal{X}$ is obtained by application of 
the operator $\text{dr}$ 
from $\mathcal{G}$ into 
$\left ( \mathcal{Y} \bigcup \left \{ * \right \} \right )^{\mathcal{X}}$
named {\em decision rule} and defined as follows:
$$
\forall x \in \mathcal{X}, \;\;
\begin{cases}
\left | \argmax_{1 \leqslant k \leqslant C} g_k \left ( x \right ) \right | = 1
\Longrightarrow \text{dr}_g \left ( x \right ) =
\argmax_{1 \leqslant k \leqslant C} g_k \left ( x \right ) \\
\left | \argmax_{1 \leqslant k \leqslant C} g_k \left ( x \right ) \right | > 1
\Longrightarrow \text{dr}_g \left ( x \right ) = *
\end{cases}
$$
where $\left | \cdot \right |$ returns the cardinality
of its argument and $*$ stands for a dummy category.
\end{definition}
In words, $\text{dr}_g$ returns either the index of
the component function whose value is the highest, or the dummy category $*$
in case of ex \ae quo. In the case when the $g_k \left ( x \right )$ 
are class posterior
probability estimates, then $\text{dr}$ is simply Bayes' estimated
decision rule \cite{RicLip91}.
The qualifier {\em margin} refers to the fact that the generalization
capabilities of such classifiers can be characterized by means of the values
taken by the differences of the corresponding component functions.
The use of the dummy category to avoid breaking ties
is not central to the theory. Its main advantage rests in the fact that
it keeps the reasoning and formulas as simple as possible. 

With this definition at hand, the aim of the {\em learning process}
is to minimize over $\mathcal{G}$ the {\em probability of error}
$P \left ( \text{dr}_g \left ( X \right ) \neq Y \right )$.
This probability can be reformulated in a handy way
thanks to the introduction of additional functions.

\begin{definition}[Class of functions $\mathcal{F}_{\mathcal{G}}$]
\label{def:class-of-transformed-functions}
Let $\mathcal{G}$ be a class of functions satisfying
Definition~\ref{def:margin-multi-category-classifiers}.
For all $g \in \mathcal{G}$, the function
$f_g$ from $\mathcal{X} \times \ieg 1, C \ied$ 
into $\left [-M_{\mathcal{G}}, M_{\mathcal{G}} \right ]$ is defined by:
$$
\forall \left ( x, k \right ) \in \mathcal{X} \times \ieg 1, C \ied, \;\;
f_g \left ( x, k \right ) = \frac{1}{2} \left ( g_k \left ( x \right ) -
\max_{l \neq k} g_l \left ( x \right ) \right ).
$$
Then, the class $\mathcal{F}_{\mathcal{G}}$ is defined as follows:
$$
\mathcal{F}_{\mathcal{G}} = \left \{ f_g: \; g \in \mathcal{G} \right \}.
$$
\end{definition}

\begin{definition}[Expected risk $L$]
Let $\mathcal{G}$ be a class of functions satisfying
Definition~\ref{def:margin-multi-category-classifiers}
and let $\phi$ be the standard indicator loss function given by:
$$
\forall t \in \mathbb{R}, \;\;
\phi \left ( t \right ) = \nbOne_{\left \{ t \leqslant 0 \right \}}.
$$
The {\em expected risk} of any function $g \in \mathcal{G}$,
$L \left ( g \right )$, is given by:
$$
L \left ( g \right )
= \mathbb{E}_{\left ( X, Y \right ) \sim P}
\left [ \phi \circ f_g \left ( X, Y \right ) \right ]
= P \left ( \text{dr}_g \left ( X \right ) \neq Y \right ).
$$
Its {\em empirical risk} measured on the $m$-sample $\mathbf{Z}_m$ is:
$$
L_m \left ( g \right ) =
\mathbb{E}_{Z' \sim P_m}
\left [ \phi \circ f_g \left ( Z' \right ) \right ] =
\frac{1}{m} \sum_{i=1}^m
\phi \circ f_g \left ( Z_i \right ).
$$
\end{definition}
In order to take benefit from the fact that the classifiers of interest
are margin ones, the sample-based estimate of performance which is
actually used (involved in the different guaranteed risks)
is obtained by substituting to $\phi$
a (dominating) margin loss/cost function.
In this study, the definition used for those functions is the following one.

\begin{definition}[Margin loss functions]
\label{def:margin-loss-functions}
A class of {\em margin loss functions} $\phi_{\gamma}$
parameterized by $\gamma \in \left ( 0, 1 \right ]$ is a class
of nonincreasing functions from $\mathbb{R}$ into $\left [ 0, 1 \right ]$
satisfying:
\begin{enumerate}
\item $\forall \gamma \in \left ( 0, 1 \right ], \;\; 
       \phi_{\gamma} \left ( 0 \right ) = 1 
       \wedge \phi_{\gamma} \left ( \gamma \right ) = 0$;
\item $\forall \left ( \gamma, \gamma' \right ) \in \left ( 0, 1 \right ]^2,
       \;\; \gamma < \gamma' \Longrightarrow \forall t 
       \in \left ( 0, \gamma \right ), \;\;
       \phi_{\gamma} \left ( t \right )
       \leqslant \phi_{\gamma'} \left ( t \right )$.
\end{enumerate}
\end{definition}

\begin{remark}
The qualifier {\em dominating} is appropriate since we have for all
$\left ( \gamma, t \right ) \in \left ( 0, 1 \right ] \times \mathbb{R}$,
$\phi_{\gamma} \left ( t \right ) \geqslant \phi \left ( t \right )$.
The second property is especially useful to derive guaranteed risks
holding uniformly for all values of $\gamma$.
This can be achieved by means of Proposition~8 in \cite{Bar98}.
It is noteworthy that these losses are not convex. They can even be
discontinuous (whereas the definition used by Koltchinskii and Panchenko
in \cite{KolPan02} (Section~2) includes the Lipschitz property). 
\end{remark}
A risk obtained by substituting to $\phi$ a function
$\phi_{\gamma}$ is named a margin risk.

\begin{definition}[Margin risk $L_{\gamma}$]
\label{def:margin_risk}
Let $\mathcal{G}$ be a class of functions
satisfying Definition~\ref{def:margin-multi-category-classifiers}.
For every (ordered) pair $\left ( g, \gamma \right )
\in \mathcal{G} \times \left (0, 1 \right ]$,
the {\em risk with margin $\gamma$} of $g$,  $L_{\gamma} \left ( g \right )$,
is defined as:
$$
L_{\gamma} \left ( g \right ) = \mathbb{E}_{Z \sim P} 
\left [ \phi_{\gamma} \circ f_g \left ( Z \right ) \right ].
$$
$L_{\gamma, m} \left ( g \right )$
designates the corresponding empirical risk,
measured on the $m$-sample $\mathbf{Z}_m$:
$$
L_{\gamma, m} \left ( g \right ) = 
\mathbb{E}_{Z' \sim P_m} 
\left [ \phi_{\gamma} \circ f_g \left ( Z' \right ) \right ] =
\frac{1}{m} \sum_{i=1}^m
\phi_{\gamma} \circ f_g \left ( Z_i \right ).
$$
\end{definition}
Taking our inspiration from \cite{Bar98},
we use margin loss functions in combination
with a piecewise-linear squashing function.
In short, the idea is to restrict the available information 
to what is relevant for the assessment of the prediction accuracy
(the value of the margin loss is not affected), 
so as to optimize the way the introduction
of the margin parameter $\gamma$ is taken into account.

\begin{definition}[Piecewise-linear squashing function $\pi_{\gamma}$]
\label{def:piecewise-linear-squashing-function}
For $\gamma \in \left (0, 1 \right ]$,
the {\em piecewise-linear squashing function} $\pi_{\gamma}$ is defined by:
$$
\forall t \in \mathbb{R}, \;\;
\pi_{\gamma} \left ( t \right ) =
t \nbOne_{\left \{ t \in \left ( 0, \gamma \right ] \right \}}
+ \gamma \nbOne_{\left \{ t > \gamma \right \}}.
$$
\end{definition}
This definition actually satisfies the aforementioned specification
since we have:
$$
\forall \gamma \in \left ( 0, 1 \right ], \;\;
\phi_{\gamma} \circ \pi_{\gamma} = \phi_{\gamma}.
$$

\begin{definition}[Class of functions $\mathcal{F}_{\mathcal{G}, \gamma}$]
\label{def:class-of-regret-functions}
Let $\mathcal{G}$ be a class of functions
satisfying Definition~\ref{def:margin-multi-category-classifiers} and
$\mathcal{F}_{\mathcal{G}}$ the class of functions deduced from
$\mathcal{G}$ according to Definition~\ref{def:class-of-transformed-functions}.
For every pair $\left ( g, \gamma \right )
\in \mathcal{G} \times \left (0, 1 \right ]$,
the function
$f_{g, \gamma}$ from $\mathcal{X} \times \ieg 1, C \ied$
into $\left [ 0, \gamma \right ]$ is defined by:
$$
f_{g, \gamma} = \pi_{\gamma} \circ f_g.
$$
Then, the class $\mathcal{F}_{\mathcal{G}, \gamma}$ is defined as follows:
$$
\mathcal{F}_{\mathcal{G}, \gamma} = 
\left \{ f_{g, \gamma}: \; g \in \mathcal{G} \right \}.
$$
\end{definition}

\subsection{Scale-sensitive capacity measures}
\label{sec:capacity-measures}

The guaranteed risks are ordinarily
obtained in several main steps,
corresponding to a basic supremum inequality and successive
upper bounds on the capacity measure it involves, 
each of which corresponds to a change of capacity measure.
Although the measures which are central to this study are covering numbers, 
we start by giving the definition of the Rademacher complexity
since it is the measure appearing
first in the case of the $L_2$-norm.
For $n \in \mathbb{N}^*$, a Rademacher sequence $\boldsymbol{\sigma}_n$
is a sequence $\left ( \sigma_i \right )_{1 \leqslant i \leqslant n}$
of independent random signs, i.e., independent and identically distributed
random variables taking the values $-1$ and $1$ with probability
$\frac{1}{2}$ (symmetric Bernoulli or Rademacher random variables).

\begin{definition}[Rademacher complexity]
\label{def:Rademacher-complexity}
Let $\left ( \mathcal{T}, \mathcal{A}_{\mathcal{T}} \right )$
be a measurable space and
let $T$ be a random variable with values in $\mathcal{T}$,
distributed according to a probability measure $P_T$
on $\left ( \mathcal{T}, \mathcal{A}_{\mathcal{T}} \right )$.
For $n \in \mathbb{N}^*$,
let $\mathbf{T}_n = \left( T_i  \right)_{1 \leqslant i \leqslant n}$
be an $n$-sample made up of independent copies of $T$ and let
$\boldsymbol{\sigma}_n = \left ( \sigma_i \right )_{1 \leqslant i \leqslant n}$
be a Rademacher sequence.
Let $\mathcal{F}$ be a class of real-valued functions with domain $\mathcal{T}$.
The {\em empirical Rademacher complexity} of $\mathcal{F}$ is
$$
\hat{R}_n \left ( \mathcal{F} \right ) = 
\mathbb{E}_{\boldsymbol{\sigma}_n}
\left [ \sup_{f \in \mathcal{F}} \frac{1}{n}
\sum_{i=1}^n \sigma_i f \left ( T_i \right )
\relmiddle| \mathbf{T}_n \right ].
$$
The {\em Rademacher complexity} of $\mathcal{F}$ is
$$
R_n \left ( \mathcal{F} \right ) 
= \mathbb{E}_{\mathbf{T}_n} 
\left [ \hat{R}_n \left ( \mathcal{F} \right ) \right ]
= \mathbb{E}_{\mathbf{T}_n \boldsymbol{\sigma}_n} \left [
\sup_{f \in \mathcal{F}} \frac{1}{n}
\sum_{i=1}^n \sigma_i f \left ( T_i \right ) \right ].
$$
\end{definition}

\begin{remark}
The fact that the functional classes $\mathcal{F}$ of interest can be
uncountable calls for a specification. We make use of the standard convention
(see for instance Formula~(0.2) in \cite{Tal05}).
Let $\left ( T_s \right )_{s \in \mathcal{S}}$
be a stochastic process. Then,
$$
\mathbb{E} \left [ \sup_{s \in \mathcal{S}} T_s \right ] =
\sup_{ \left \{ \bar{\mathcal{S}} \subset \mathcal{S}: \;
\left | \bar{\mathcal{S}} \right | < +\infty \right \} }
\mathbb{E} \left [ \max_{s \in \bar{\mathcal{S}}} T_s \right ].
$$
\end{remark}
The concept of covering number ($\epsilon$-entropy), 
as well as the underlying
concepts of $\epsilon$-cover and $\epsilon$-net,
can be traced back to \cite{KolTih61}.

\begin{definition}[$\epsilon$-cover, $\epsilon$-net, covering numbers,
and $\epsilon$-entropy]
\label{def:covering-numbers}
Let $ \left ( E, \rho \right )$ 
be a pseudo-metric space, $E' \subset E$ and
$\epsilon \in \mathbb{R}_+^*$.
An {\em $\epsilon$-cover} of $E'$ is a coverage of $E'$
with open balls of radius $\epsilon$
the centers of which belong to $E$. These centers form an {\em $\epsilon$-net}
of $E'$. A {\em proper $\epsilon$-net} of $E'$ is an
{\em $\epsilon$-net} of $E'$ included in $E'$.
If $E'$ has an $\epsilon$-net of finite cardinality, then its
{\em covering number}
$\mathcal{N} \left ( \epsilon, E', \rho \right )$ is the smallest cardinality
of its $\epsilon$-nets:
$$
\mathcal{N} \left ( \epsilon, E', \rho \right ) =
\min \left \{ \left | E'' \right |: \; \left ( E'' \subset E \right ) \wedge 
\left ( \forall e \in E', \; \rho \left ( e, E'' \right ) < \epsilon 
\right ) \right \}.
$$
If there is no such finite net, then
the covering number is defined to be infinite.
The corresponding logarithm,
$\log_2 \left ( \mathcal{N} \left ( \epsilon, E', \rho \right ) \right )$,
is called the {\em minimal $\epsilon$-entropy} of $E'$, or simply
the {\em $\epsilon$-entropy} of $E'$.
$\mathcal{N}^{(p)} \left ( \epsilon, E', \rho \right )$ 
will designate a covering number
of $E'$ obtained by considering proper $\epsilon$-nets only.
In the finite case, we have thus:
$$
\mathcal{N}^{(p)} \left ( \epsilon, E', \rho \right ) =
\min \left \{ \left | E'' \right |: \; \left ( E'' \subset E' \right ) \wedge 
\left ( \forall e \in E', \; \rho \left ( e, E'' \right ) < \epsilon 
\right ) \right \}.
$$
\end{definition}
There is a close connection between covering and packing properties
of bounded subsets in pseudo-metric spaces.

\begin{definition}[$\epsilon$-separation and packing numbers \cite{KolTih61}]
Let $ \left ( E, \rho \right )$ be a pseudo-metric space and 
$\epsilon \in \mathbb{R}_+^*$.
A set $E' \subset E$ is
{\em $\epsilon$-separated} if, for any distinct points $e$ and $e'$ in $E'$,
$\rho \left ( e, e' \right) \geqslant \epsilon$.
The {\em $\epsilon$-packing number} of $E'' \subset E$,
$\mathcal{M} \left ( \epsilon, E'', \rho \right )$,
is the maximal cardinality of an $\epsilon$-separated subset of $E''$,
if such maximum exists. Otherwise, the $\epsilon$-packing number of $E''$
is defined to be infinite.
\end{definition}
In this study, the functional classes met are endowed with
empirical (pseudo-)metrics derived from the $L_p$-norm.

\begin{definition}[Pseudo-distance $d_{p, \mathbf{t}_n}$]
\label{def:pseudo-metric-d_p}
Let $\mathcal{F}$ be a class of real-valued functions on
$\mathcal{T}$. For $n \in \mathbb{N}^*$, let
$\mathbf{t}_n = 
\left ( t_i \right )_{1 \leqslant i \leqslant n} \in \mathcal{T}^n$. Then,
$$
\forall p \in \mathbb{N}^*,
\forall \left ( f, f' \right ) \in \mathcal{F}^2, \;\;
d_{p, \mathbf{t}_n}  \left ( f, f' \right ) 
= \left \| f - f' \right \|_{L_p \left ( \mu_{\mathbf{t}_n} \right )}
= \left ( \frac{1}{n} \sum_{i=1}^n
\left | f \left ( t_i \right ) -
f' \left ( t_i \right ) \right |^p
\right )^{\frac{1}{p}}
$$
and
$$
\forall \left ( f, f' \right ) \in \mathcal{F}^2, \;\;
d_{\infty, \mathbf{t}_n}  \left ( f, f' \right )
= \left \| f - f' \right \|_{L_{\infty} \left ( \mu_{\mathbf{t}_n} \right )}
= \max_{1 \leqslant i \leqslant n}
\left | f \left ( t_i \right ) - f' \left ( t_i \right ) \right |,
$$
where $\mu_{\mathbf{t}_n}$ denotes the uniform (counting) probability measure
on $\left \{ t_i: 1 \leqslant i \leqslant n \right \}$.
\end{definition}

\begin{definition}[Uniform covering numbers \cite{WilSmoSch01}
and uniform packing numbers \cite{Bar98}]
\label{def:uniform-covering-numbers}
Let  $\mathcal{F}$ be a class of real-valued functions on
$\mathcal{T}$ and $\bar{\mathcal{F}} \subset \mathcal{F}$.
For $p \in \mathbb{N}^* \bigcup \left \{ +\infty \right \}$,
$\epsilon \in \mathbb{R}_+^*$, and $n \in \mathbb{N}^*$,
the {\em uniform covering number} 
$\mathcal{N}_p \left ( \epsilon, \bar{\mathcal{F}}, n \right )$ 
and the {\em uniform packing number}
$\mathcal{M}_p \left ( \epsilon, \bar{\mathcal{F}}, n \right )$ 
are defined as follows:
$$
\begin{cases}
\mathcal{N}_p \left ( \epsilon, \bar{\mathcal{F}}, n \right ) =
\sup_{\mathbf{t}_n \in \mathcal{T}^n}
\mathcal{N} \left ( \epsilon, \bar{\mathcal{F}}, 
d_{p, \mathbf{t}_n} \right ) \\
\mathcal{M}_p \left ( \epsilon, \bar{\mathcal{F}}, n \right ) =
\sup_{\mathbf{t}_n \in \mathcal{T}^n}
\mathcal{M} \left ( \epsilon, \bar{\mathcal{F}}, 
d_{p, \mathbf{t}_n} \right )
\end{cases}.
$$
We define accordingly 
$\mathcal{N}_p^{(p)} \left ( \epsilon, \bar{\mathcal{F}}, n \right )$ as:
$$
\mathcal{N}_p^{(p)} \left ( \epsilon, \bar{\mathcal{F}}, n \right ) =
\sup_{\mathbf{t}_n \in \mathcal{T}^n}
\mathcal{N}^{(p)} \left ( \epsilon, \bar{\mathcal{F}}, 
d_{p, \mathbf{t}_n} \right ).
$$
\end{definition}
Our Sauer-Shelah lemma relates covering/packing numbers to
a scale-sensitive generalization of 
the Vapnik-Chervonenkis (VC) dimension \cite{VapChe71}:
the fat-shattering dimension \cite{KeaSch94} also known as 
the $\gamma$-dimension.

\begin{definition}[Fat-shattering dimension \cite{KeaSch94}]
\label{definition:fat_shattering_dim}
Let $\mathcal{F}$ be a class of functions from $\mathcal{T}$
into $\mathcal{S} \subset \mathbb{R}$.
For $\gamma \in \mathbb{R}_+^*$,
a subset $s_{\mathcal{T}^n} = \left \{ t_i: 1 \leqslant i \leqslant n \right \}$
of $\mathcal{T}$
is said to be {\em ${\gamma}$-shattered} by $\mathcal{F}$ if
there is a vector 
$\mathbf{b}_n =  \left ( b_i \right )_{1 \leqslant i \leqslant n}
\in \mathcal{S}^n$ 
such that, for every vector
$\mathbf{l}_n = \left ( l_i \right )_{1 \leqslant i \leqslant n}
\in \left \{ -1, 1 \right \}^n$, there is a function
$f_{\mathbf{l}_n} \in \mathcal{F}$ satisfying
$$
\forall i \in \ieg 1, n \ied, \;\;
l_i \left( f_{\mathbf{l}_n} \left ( t_i \right ) - b_i \right) \geqslant \gamma.
$$
The vector $\mathbf{b}_n$ is called a {\em witness} to 
the ${\gamma}$-shattering.
The {\em fat-shattering dimension with margin $\gamma$} of the class
$\mathcal{F}$, $\gamma\mbox{-dim} \left ( \mathcal{F} \right )$,
is the maximal cardinality of a subset of $\mathcal{T}$
${\gamma}$-shattered by $\mathcal{F}$,
if such maximum exists.
Otherwise,
$\mathcal{F}$ is said to have infinite 
fat-shattering dimension with margin $\gamma$.
\end{definition}

\begin{remark}
With the introduction of the set $\mathcal{S}$ (and the constraint
$\mathbf{b}_n \in \mathcal{S}^n$) 
in Definition~\ref{definition:fat_shattering_dim},
there is no need to make use of the strong dimension
(Definition~3.1 in \cite{AloBenCesHau97}).
A difference with the definition
used in \cite{MenVer03} regards the concept of shattering.
As most of the authors (see for instance \cite{AloBenCesHau97}),
we do not adopt the convention consisting in
considering that the empty set can be shattered. Using the
terminology of Mendelson and Vershynin
(see Section~2.2 in \cite{MenVer03}), the {\em trivial center}
is not involved in our computations.
\end{remark}
Each of the generalized Sauer-Shelah lemmas in the literature is based on
a main combinatorial result
that involves a class of functions whose domain
and codomain are finite sets.
The first property is simply obtained by application of a restriction
of the domain to the data at hand.
As for the finiteness of the codomain, if needed,
it is obtained by application of a discretization operator.
The present study makes use of the following one, already employed, 
for instance, in \cite{BarLon95}.

\begin{definition}[$\eta$-discretization operator]
Let $\mathcal{F}$ be a class of functions from $\mathcal{T}$
into $\left [ -M_{\mathcal{F}}, M_{\mathcal{F}} \right ]$
with $M_{\mathcal{F}} \in \mathbb{R}_+^*$.
For $\eta \in \mathbb{R}_+^*$, define the {\em $\eta$-discretization}
as an operator on $\mathcal{F}$ such that:
$$
\begin{array}{l l l l}
\left ( \cdot \right )^{\left ( \eta \right )}:
& \mathcal{F} & \longrightarrow & \mathcal{F}^{\left ( \eta \right )} \\
& f & \mapsto & f^{\left ( \eta \right )}
\end{array}
$$
$$
\forall t \in \mathcal{T}, \;\;
f^{\left ( \eta \right )} \left ( t \right ) =
\eta \left \lfloor \frac{f \left ( t \right )
+ M_{\mathcal{F}}}{\eta} \right \rfloor
$$
where the floor function $\left \lfloor \cdot \right \rfloor$ is defined by:
$$
\forall u \in \mathbb{R}, \;\; \left \lfloor u \right \rfloor =
\max \left \{ j \in \mathbb{Z}: \; j \leqslant u \right \}.
$$
\end{definition}
The finiteness of all the capacity measures considered in the sequel is ensured.
Precisely, Theorem~2.5 in \cite{AloBenCesHau97} 
(see also Theorem~2.4 in \cite{Men02}) tells us that
the fat-shattering dimension of a GC class is finite
for every positive value of $\gamma$,
and a corollary of the generalized Sauer-Shelah lemma
is that the finiteness of this dimension
implies the total boundedness.

\section{$L_p$-norm Sauer-Shelah Lemma}
\label{sec:L_p-norm Sauer-Shelah Lemma}

Our master lemma is made up of two partial results.
The first one, the {\em decomposition lemma},
relates the covering numbers of 
$\mathcal{F}_{\mathcal{G}, \gamma}$
to those of the classes of component functions $\mathcal{G}_k$.
The second one is the actual generalized Sauer-Shelah lemma.

\subsection{Master lemma}

\begin{lemma}[Decomposition lemma]
\label{lemma:from-multivariate-to-univariate-L_p}
Let $\mathcal{G}$ be a class of functions
satisfying Definition~\ref{def:margin-multi-category-classifiers}
and $\mathcal{F}_{\mathcal{G}}$ the class of functions deduced
from $\mathcal{G}$ according to
Definition~\ref{def:class-of-transformed-functions}.
For $\gamma \in \left ( 0, 1 \right ]$,
let $\mathcal{F}_{\mathcal{G}, \gamma}$ be the class of functions deduced
from $\mathcal{G}$ according to
Definition~\ref{def:class-of-regret-functions}.
Then, for $\epsilon \in \mathbb{R}_+^*$, 
$m \in \mathbb{N}^*$,
and $\mathbf{z}_m = \left ( \left ( x_i, y_i \right ) 
\right )_{1 \leqslant i \leqslant m} =
\left ( z_i \right )_{1 \leqslant i \leqslant m}$,

\begin{equation}
\label{eq:from-multivariate-to-univariate-L_p}
\forall p \in \mathbb{N}^* \bigcup \left \{ +\infty \right \}, \;\;
\mathcal{N}^{(p)} \left ( \epsilon, 
\mathcal{F}_{\mathcal{G}, \gamma}, d_{p, \mathbf{z}_m} \right ) \leqslant
\mathcal{N}^{(p)} \left ( \epsilon, 
\mathcal{F}_{\mathcal{G}}, d_{p, \mathbf{z}_m} \right ) \leqslant
\prod_{k=1}^C \mathcal{N}^{(p)} 
\left ( \frac{\epsilon}{C^{\frac{1}{p}}}, 
\mathcal{G}_k, d_{p, \mathbf{x}_m} \right ),
\end{equation}
where $\mathbf{x}_m = \left ( x_i \right )_{1 \leqslant i \leqslant m}$.
\end{lemma}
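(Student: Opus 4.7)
The plan is to establish the two claimed inequalities separately. For the first, $\mathcal{N}^{(p)}(\epsilon, \mathcal{F}_{\mathcal{G}, \gamma}, d_{p, \mathbf{z}_m}) \leqslant \mathcal{N}^{(p)}(\epsilon, \mathcal{F}_{\mathcal{G}}, d_{p, \mathbf{z}_m})$, I would observe that $\pi_\gamma$ as defined in Definition~\ref{def:piecewise-linear-squashing-function} is nondecreasing and piecewise linear with slopes in $\left \{ 0, 1 \right \}$, hence $1$-Lipschitz on $\mathbb{R}$. Consequently $|f_{g, \gamma}(z_i) - f_{g', \gamma}(z_i)| \leqslant |f_g(z_i) - f_{g'}(z_i)|$ for every index $i$, and therefore $d_{p, \mathbf{z}_m}(f_{g, \gamma}, f_{g', \gamma}) \leqslant d_{p, \mathbf{z}_m}(f_g, f_{g'})$ for every $p$, including $p = +\infty$. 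Any proper $\epsilon$-net $\{f_{g_j}\}$ of $\mathcal{F}_{\mathcal{G}}$ transports directly to the proper $\epsilon$-net $\{f_{g_j, \gamma}\}$ of $\mathcal{F}_{\mathcal{G}, \gamma}$, which yields the first inequality.

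For the second, the key step is a pointwise change-of-metric estimate that relates $f_g$ to the component functions of $g$:
$$
|f_g(x, k) - f_{g'}(x, k)| \leqslant \max_{1 \leqslant l \leqslant C} |g_l(x) - g'_l(x)|,
\quad (g, g') \in \mathcal{G}^2, \; (x, k) \in \mathcal{X} \times \ieg 1, C \ied.
$$
This is obtained by expanding $2(f_g(x, k) - f_{g'}(x, k)) = (g_k(x) - g'_k(x)) - (\max_{l \neq k} g_l(x) - \max_{l \neq k} g'_l(x))$, applying the triangle inequality, and using the elementary fact that $|\max_l a_l - \max_l a'_l| \leqslant \max_l |a_l - a'_l|$ to absorb the $\max_{l \neq k}$ terms; the factor $1/2$ combines the two bounds into a single maximum over all $l$.

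Raising to the power $p$, averaging over $\mathbf{z}_m$, and using $\max_k u_k^p \leqslant \sum_k u_k^p$ for nonnegative $u_k$ yields
$$
d_{p, \mathbf{z}_m}(f_g, f_{g'})^p \leqslant \frac{1}{m} \sum_{i=1}^m \max_{1 \leqslant k \leqslant C} |g_k(x_i) - g'_k(x_i)|^p \leqslant \sum_{k=1}^C d_{p, \mathbf{x}_m}(g_k, g'_k)^p,
$$
while for $p = +\infty$ the analogous estimate is simply $d_{\infty, \mathbf{z}_m}(f_g, f_{g'}) \leqslant \max_k d_{\infty, \mathbf{x}_m}(g_k, g'_k)$. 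To close the argument, for each $k$ I would take a proper $\epsilon / C^{1/p}$-net $\mathcal{G}_k^\star \subset \mathcal{G}_k$ of minimal cardinality in the metric $d_{p, \mathbf{x}_m}$ (with the convention $C^{1/\infty} = 1$). Since $\mathcal{G} = \prod_k \mathcal{G}_k$, the product $\prod_k \mathcal{G}_k^\star$ lies in $\mathcal{G}$, hence the induced family $\{f_{g^\star} : g^\star \in \prod_k \mathcal{G}_k^\star\}$ is a proper subset of $\mathcal{F}_{\mathcal{G}}$ of cardinality at most $\prod_k |\mathcal{G}_k^\star|$. Given $g \in \mathcal{G}$, choosing coordinate-wise a $g^\star_k \in \mathcal{G}_k^\star$ with $d_{p, \mathbf{x}_m}(g_k, g^\star_k) < \epsilon/C^{1/p}$ and plugging into the bound above gives $d_{p, \mathbf{z}_m}(f_g, f_{g^\star}) < \epsilon$, which yields the desired upper bound on $\mathcal{N}^{(p)}(\epsilon, \mathcal{F}_{\mathcal{G}}, d_{p, \mathbf{z}_m})$.

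The main obstacle I anticipate is the pointwise estimate in the second step: one must track the $1/2$ factor in Definition~\ref{def:class-of-transformed-functions} so that the two contributions (the $y$-th coordinate and the $\max_{l \neq y}$ term) consolidate into a single $\max_l$ rather than its double, and one must be sure that the $\max$ over $l \neq k$ is genuinely dominated by the $\max$ over all $l$. Once that inequality is in place, the remainder of the proof is routine $L_p$/$L_\infty$ bookkeeping together with the product structure of $\mathcal{G}$.
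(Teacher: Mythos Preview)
Your proposal is correct and follows essentially the same route as the paper: the first inequality via the $1$-Lipschitz property of $\pi_\gamma$, the second via the estimate $|\max_l a_l - \max_l a'_l| \leqslant \max_l |a_l - a'_l|$ on the argmax term combined with $\max_k u_k^p \leqslant \sum_k u_k^p$ to pass from a pointwise maximum to a sum of $d_{p,\mathbf{x}_m}$-distances. The only cosmetic difference is that the paper introduces auxiliary variables $\theta_{ki}$ to track the normalised coordinate deviations and performs a change of variable at the end, whereas you work directly at scale $\epsilon/C^{1/p}$ from the outset; both yield the same bound.
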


\begin{proof}
The left-hand side inequality in
Formula~(\ref{eq:from-multivariate-to-univariate-L_p})
is trivially true for $\epsilon > \gamma$.
Otherwise, it is a direct consequence of
the $1$-Lipschitz property of the function $\pi_{\gamma}$.
Similarly, the proof of the right-hand side inequality
is nontrivial only for $\epsilon \leqslant 2 M_{\mathcal{G}}$.
We first derive it for a finite value of $p$ only.
For every function $g = \left ( g_k \right )_{1 \leqslant k \leqslant C}
\in \mathcal{G}$ and every element $z = \left ( x, y \right )
\in \mathcal{X} \times \ieg 1, C \ied$, let
$k \left ( g, z \right ) \in \ieg 1, C \ied \setminus \left \{ y \right \}$
be an index of category such that
$f_g \left ( z \right ) = \frac{1}{2} \left ( 
g_{y} \left ( x \right ) - g_{k \left ( g, z \right )} 
\left ( x \right ) \right )$.
For all $k \in \ieg 1, C \ied$, let $\bar{\mathcal{G}}_k$ be a proper
$\epsilon$-net of $\mathcal{G}_k$ with respect to the pseudo-metric
$d_{p, \mathbf{x}_m}$ such that $\bar{\mathcal{G}}_k$
is of cardinality $\mathcal{N}^{(p)} \left ( \epsilon, \mathcal{G}_k, 
d_{p, \mathbf{x}_m} \right )$.
By construction, the cardinality of the class of functions 
$\bar{\mathcal{G}} = \prod_{k=1}^C \bar{\mathcal{G}}_k$ is
$\prod_{k=1}^C \mathcal{N}^{(p)} \left ( \epsilon, \mathcal{G}_k, 
d_{p, \mathbf{x}_m} \right )$,
and for every function 
$g = \left ( g_k \right )_{1 \leqslant k \leqslant C} \in \mathcal{G}$,
there exists a function
$\bar{g} = \left ( \bar{g}_k \right )_{1 \leqslant k \leqslant C}
\in \bar{\mathcal{G}}$ such that:
\begin{equation}
\label{eq:definition-of-bar-g}
\forall k \in \ieg 1, C \ied, \;\; 
d_{p, \mathbf{x}_m} \left ( g_k, \bar{g}_k \right ) < \epsilon.
\end{equation}
By definition of the empirical pseudo-metric,
for every $k \in \ieg 1, C \ied$ and every function $g_k \in \mathcal{G}_k$,
\begin{align}
d_{p, \mathbf{x}_m} \left ( g_k, \bar{g}_k \right ) < \epsilon
& \Longleftrightarrow \;
\left ( \frac{1}{m} \sum_{i=1}^m
\left | g_k \left ( x_i \right ) -
\bar{g}_k \left ( x_i \right ) \right |^p
\right )^{\frac{1}{p}} < \epsilon \nonumber \\
& \Longrightarrow \;
\forall i \in \ieg 1, m \ied, \;\;
\left | g_k \left ( x_i \right ) - \bar{g}_k \left ( x_i \right ) \right | 
< m^{\frac{1}{p}} \epsilon \nonumber \\
\label{eq:def_theta_ik}
& \Longrightarrow \;
\left ( \left | g_k \left ( x_i \right ) - 
\bar{g}_k \left ( x_i \right ) \right | 
\right )_{1 \leqslant i \leqslant m}  = m^{\frac{1}{p}} \epsilon
\left ( \theta_{ki} \right )_{1 \leqslant i \leqslant m}
\end{align}
where $\left ( \theta_{ki} 
\right )_{1 \leqslant i \leqslant m} \in \left [ 0, 1 \right )^m$.
Furthermore, if $g_{k \left ( g, z_i \right )} \left ( x_i \right )
\geqslant \bar{g}_{k \left ( \bar{g}, z_i \right )} \left ( x_i \right )$,
then
\begin{align*}
\left | g_{k \left ( g, z_i \right )} \left ( x_i \right ) -
\bar{g}_{k \left ( \bar{g}, z_i \right )} \left ( x_i \right )
\right |
& = \; 
g_{k \left ( g, z_i \right )} \left ( x_i \right ) -
\bar{g}_{k \left ( \bar{g}, z_i \right )} \left ( x_i \right ) \\
& \leqslant \; 
g_{k \left ( g, z_i \right )} \left ( x_i \right ) -
\bar{g}_{k \left ( g, z_i \right )} \left ( x_i \right ) \\
& \leqslant \; 
\left |  g_{k \left ( g, z_i \right )} \left ( x_i \right ) -
\bar{g}_{k \left ( g, z_i \right )} \left ( x_i \right ) \right | \\
& \leqslant \; 
\theta_{k \left ( g, z_i \right ) i} m^{\frac{1}{p}} \epsilon.
\end{align*}
Symmetrically, $g_{k \left ( g, z_i \right )} \left ( x_i \right )
\leqslant \bar{g}_{k \left ( \bar{g}, z_i \right )} \left ( x_i \right )$
implies that
$\left | g_{k \left ( g, z_i \right )} \left ( x_i \right ) -
\bar{g}_{k \left ( \bar{g}, z_i \right )} \left ( x_i \right )
\right | \leqslant \theta_{k \left ( \bar{g}, z_i \right ) i}
m^{\frac{1}{p}} \epsilon$.
To sum up,
\begin{equation}
\label{eq:majorant-delta-g_k}
\forall i \in \ieg 1, m \ied, \;\;
\left | g_{k \left ( g, z_i \right )} \left ( x_i \right ) -
\bar{g}_{k \left ( \bar{g}, z_i \right )} \left ( x_i \right )
\right | \leqslant \max \left ( \theta_{k \left ( g, z_i \right ) i},
\theta_{k \left ( \bar{g}, z_i \right ) i}
\right ) m^{\frac{1}{p}} \epsilon.
\end{equation}
For all $k \in \ieg 1, C \ied$,
let $\boldsymbol{\theta}_k = \left ( \theta_{ki} 
\right )_{1 \leqslant i \leqslant m}$.
Making use once more of (\ref{eq:definition-of-bar-g}) provides us with:
\begin{equation}
\label{eq:normed-theta-k}
\forall k \in \ieg 1, C \ied, \;\; 
\left \| \boldsymbol{\theta}_k \right \|_p < 1.
\end{equation}
As a consequence,
\begin{align}
d_{p, \mathbf{z}_m} \left ( f_g, f_{\bar{g}} \right )
& = \;
\left ( \frac{1}{m} \sum_{i=1}^m \left | 
f_g \left ( z_i \right ) - f_{\bar{g}} \left ( z_i \right )
\right |^p \right )^{\frac{1}{p}} \nonumber \\
& = \; \frac{1}{2}
\left ( \frac{1}{m} \sum_{i=1}^m \left | g_{y_i} \left ( x_i \right )
- g_{k \left ( g, z_i \right )} \left ( x_i \right ) 
-  \bar{g}_{y_i} \left ( x_i \right )
+ \bar{g}_{k \left ( \bar{g}, z_i \right )} \left ( x_i \right )
\right |^p \right )^{\frac{1}{p}} \nonumber \\
& \leqslant \; \frac{1}{2}
\left ( \frac{1}{m} \sum_{i=1}^m \left ( \left | g_{y_i} \left ( x_i \right )
-  \bar{g}_{y_i} \left ( x_i \right ) \right | + \left |
g_{k \left ( g, z_i \right )} \left ( x_i \right ) 
- \bar{g}_{k \left ( \bar{g}, z_i \right )} \left ( x_i \right ) \right |
\right )^p \right )^{\frac{1}{p}} \nonumber \\
\label{eq:from-multivariate-to-univariate-partial-1}
& \leqslant \; \frac{1}{2} \left ( \sum_{i=1}^m
\left ( \theta_{y_i i} + \max \left ( \theta_{k \left ( g, z_i \right ) i},
\theta_{k \left ( \bar{g}, z_i \right ) i}
\right ) \right )^p
\right )^{\frac{1}{p}} \epsilon \\
& \leqslant \; \left ( \sum_{i=1}^m \max_{1 \leqslant k \leqslant C}
\theta_{ki}^p \right )^{\frac{1}{p}} \epsilon \nonumber \\
& \leqslant \; \left ( \sum_{k=1}^C 
\left \| \boldsymbol{\theta}_k \right \|_p^p 
\right )^{\frac{1}{p}} \epsilon \nonumber \\
\label{eq:from-multivariate-to-univariate-partial-2}
& < \; C^{\frac{1}{p}} \epsilon.
\end{align}
Inequality~(\ref{eq:from-multivariate-to-univariate-partial-1})
is obtained by application of (\ref{eq:def_theta_ik}) and
(\ref{eq:majorant-delta-g_k}),
and Inequality~(\ref{eq:from-multivariate-to-univariate-partial-2})
springs from
Inequality~(\ref{eq:normed-theta-k}).
We have established that the set of functions $f_{\bar{g}}$
is a proper $\left ( C^{\frac{1}{p}} \epsilon \right )$-net 
of $\mathcal{F}_{\mathcal{G}}$
with respect to the pseudo-metric
$d_{p, \mathbf{z}_m}$.
Since its cardinality is at most that of $\bar{\mathcal{G}}$,
$$
\forall \mathbf{z}_m \in \left ( \mathcal{X} \times \ieg 1, C \ied \right )^m,
\;\; \mathcal{N}^{(p)} \left ( C^{\frac{1}{p}} \epsilon, 
\mathcal{F}_{\mathcal{G}}, d_{p, \mathbf{z}_m} \right )
\leqslant
\prod_{k=1}^C \mathcal{N}^{(p)} \left ( \epsilon, \mathcal{G}_k,
d_{p, \mathbf{x}_m} \right ).
$$
The right-hand side inequality in
Formula~(\ref{eq:from-multivariate-to-univariate-L_p})
then follows from performing a change of variable.
The proof for the uniform convergence norm
results from taking the limit when $p$ goes to infinity.
\end{proof}
The actual generalized Sauer-Shelah lemma is an extension
of Lemma~3.5 in \cite{AloBenCesHau97} and Lemma~8 in \cite{BarLon95}.
In the case when $p$ is finite, then the upper bound is {\em dimension free}
(does not depend on the number $n$ of points) thanks to the implementation of 
the probabilistic extraction principle described in \cite{MenVer03}.

\begin{lemma}[Generalized Sauer-Shelah lemma]
\label{lemma:Lemma-3.5-in-AloBenCesHau97-in-L_p}
Let $\mathcal{F}$ be a class of functions from $\mathcal{T}$ into
$\left [ -M_{\mathcal{F}}, M_{\mathcal{F}} \right ]$
with $M_{\mathcal{F}} \in \mathbb{R}_+^*$.
$\mathcal{F}$ is supposed to be a GC class.
For $\epsilon \in \left ( 0, M_{\mathcal{F}} \right ]$,
let $d \left ( \epsilon \right ) 
= \epsilon\mbox{-dim} \left ( \mathcal{F} \right )$.
Then for
$\epsilon \in \left ( 0, 2 M_{\mathcal{F}} \right ]$
and $n \in \mathbb{N}^*$,
\begin{equation}
\label{eq:Lemma-3.5-in-AloBenCesHau97-in-L_p}
\forall p \in \mathbb{N}^*, \;\;
\mathcal{M}_p \left ( \epsilon, \mathcal{F}, n \right )
\leqslant
2^{2 \left ( K_{\epsilon} \left ( p \right ) + 1 \right )}
\left ( \frac{6272 e K_{\epsilon} \left ( p \right )}{3}
\left ( \frac{2 M_{\mathcal{F}}}{\epsilon} \right )^{2p+1}
\right )^{2 K_{\epsilon} \left ( p \right ) 
d \left ( \frac{\epsilon}{45} \right )},
\end{equation}
where $K_{\epsilon} \left ( p \right )
= \left \lceil \left ( p+2 \right ) 
\log_2 \left ( \left \lceil \frac{112 M_{\mathcal{F}}}{\epsilon} \right \rceil
\right ) \right \rceil$,
and
\begin{equation}
\label{eq:Lemma-3.5-in-AloBenCesHau97}
\mathcal{M}_{\infty} \left ( \epsilon, \mathcal{F}, n \right )
\leqslant 2 \left ( \frac{16 M_{\mathcal{F}}^2 n}{\epsilon^2}
\right )^{d \left ( \frac{\epsilon}{4} \right ) \log_2 \left (
\frac{4 M_{\mathcal{F}} e n}
{d \left ( \frac{\epsilon}{4} \right ) \epsilon} \right )}.
\end{equation}
\end{lemma}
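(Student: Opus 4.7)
The plan is to establish Inequality~(\ref{eq:Lemma-3.5-in-AloBenCesHau97}) first, in a classical manner adapted from Alon, Ben-David, Cesa-Bianchi and Haussler, and then to deduce Inequality~(\ref{eq:Lemma-3.5-in-AloBenCesHau97-in-L_p}) from it via the probabilistic extraction principle of Mendelson and Vershynin \cite{MenVer03}.

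For the uniform-norm bound~(\ref{eq:Lemma-3.5-in-AloBenCesHau97}), the argument proceeds by discretization followed by a combinatorial Sauer-Shelah count. Fix any $\mathbf{t}_n \in \mathcal{T}^n$ and apply the $\eta$-discretization operator to $\mathcal{F}$ with $\eta = \epsilon/4$; the image class $\mathcal{F}^{\left ( \eta \right )}$ takes values in a finite set of cardinality at most $\left \lceil 2 M_{\mathcal{F}}/\eta \right \rceil + 1$. Any $\epsilon$-separated subset of $\mathcal{F}$ in $d_{\infty, \mathbf{t}_n}$ yields a subset of $\mathcal{F}^{\left ( \eta \right )}$ whose restrictions to $\mathbf{t}_n$ are pairwise $L_\infty$-separated at scale $\epsilon - 2 \eta$. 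Lemma~3.5 of \cite{AloBenCesHau97}, itself extending Lemma~8 of \cite{BarLon95}, then bounds the number of such distinct restrictions by a Sauer-Shelah expression involving $n$, the number of discretization levels, and the fat-shattering dimension of $\mathcal{F}^{\left ( \eta \right )}$ at scale $\epsilon/4$, which is upper bounded by $d \left ( \epsilon/4 \right )$. Substituting the explicit number of levels and simplifying yields the announced expression.

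For the dimension-free $L_p$ bound~(\ref{eq:Lemma-3.5-in-AloBenCesHau97-in-L_p}), let $N = \mathcal{M}_p \left ( \epsilon, \mathcal{F}, n \right )$ and fix an $\epsilon$-separated family $\left \{ f_1, \ldots, f_N \right \}$ in $d_{p, \mathbf{t}_n}$. Apply the $\eta$-discretization with $\eta$ of order $\epsilon$. The key step is to randomly extract a sub-sample $\mathbf{t}_K$ of size $K = K_\epsilon \left ( p \right )$ from $\mathbf{t}_n$ by sampling the $K$ indices uniformly and independently. For any fixed pair $\left ( i, j \right )$, a binomial-tail estimate shows that the restrictions of $f_i^{\left ( \eta \right )}$ and $f_j^{\left ( \eta \right )}$ to $\mathbf{t}_K$ coincide with probability at most $1/2$, the decisive input being that the set of coordinates on which these two functions differ by at least $\eta$ has relative size controlled from below by an explicit power of $\epsilon/M_{\mathcal{F}}$ (this is where the factor $\left ( 2 M_{\mathcal{F}}/\epsilon \right )^{2p+1}$ will emerge). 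Averaging the number of pairs with distinct restrictions then yields a deterministic realization of $\mathbf{t}_K$ on which at least half of the $\binom{N}{2}$ pairs remain distinguishable, so $N$ is bounded by the number of possible distinct restrictions on a sample of size $K$. The latter is in turn controlled by a multi-valued Sauer-Shelah count, this time involving $d \left ( \epsilon/45 \right )$ and $K$, producing the stated exponent $2 K d \left ( \epsilon/45 \right )$.

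The hard part will be the sharp balancing of constants. The discretization scale $\eta$, the sub-sampling size $K$ and its logarithmic dependence on $\left \lceil 112 M_{\mathcal{F}}/\epsilon \right \rceil$, the exponent $2p+1$ of $\left ( 2 M_{\mathcal{F}}/\epsilon \right )$, the prefactor $6272\,e/3$, and the scale $\epsilon/45$ at which the fat-shattering dimension is evaluated all emerge from carefully tuning the parameters so that the binomial-tail estimate survives with probability at least $1/2$ while the final Sauer-Shelah count stays as tight as possible. Once these constants are pinned down, the remaining calculations are routine bookkeeping.
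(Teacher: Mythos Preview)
Your treatment of the $L_\infty$ bound is fine and matches what the paper does (it simply cites Lemma~3.5 in \cite{AloBenCesHau97}). The $L_p$ argument, however, has a genuine gap.

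The problem is in the extraction step. You propose to sub-sample a set of \emph{fixed} size $K = K_\epsilon(p)$ and to show that each pair of discretized functions coincides on it with probability at most $1/2$. Averaging then gives a realization on which at least half of the $\binom{N}{2}$ pairs are distinguishable. But ``half the pairs distinguishable'' does \emph{not} bound $N$ by the number of distinct restrictions: with just two equivalence classes of equal size $N/2$, already $\binom{N}{2} - 2\binom{N/2}{2} \approx \binom{N}{2}/2$ pairs are distinguishable. So the ``so $N$ is bounded by \ldots'' step fails.

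What the paper actually does (following \cite{MenVer03}) is different in two essential ways. First, the extracted sub-sample has size $q \leqslant \dfrac{\ln |\mathcal{F}_\epsilon|}{K_e(p)\,\epsilon^{2p}}$, i.e.\ it depends on $\ln N$, not on the fixed constant $K_\epsilon(p)$. This size is chosen so that, via Bernstein's inequality and a union bound over all $<\tfrac{1}{2}|\mathcal{F}_\epsilon|^2$ pairs, \emph{every} pair remains separated on the sub-sample (at scale $\epsilon/4$). Second, applying the combinatorial lemma (the paper's Lemma~\ref{lemma:Lemma-8-in-BarLon95-in-L_p}) on $\mathbf{t}_q$ then yields a bound on $|\mathcal{F}_\epsilon|$ that still contains $q$, hence $\ln|\mathcal{F}_\epsilon|$, on the right-hand side. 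Resolving this implicit inequality is a separate step: the paper uses the elementary estimate $\ln^r(u) \leqslant r^r \sqrt{u}$ with $r = K_{N,p} = \lceil (p+2)\log_2(N)\rceil$, and \emph{this} is where $K_\epsilon(p)$ enters the final bound, as the exponent coming from the combinatorial lemma, not as the sub-sample size. There is also a case split (when $|\mathcal{F}_\epsilon| > \exp(K_e(p)\,n\,\epsilon^{2p})$, one works directly on $\mathbf{t}_n$) that your sketch omits.
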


\begin{proof}
Since~(\ref{eq:Lemma-3.5-in-AloBenCesHau97}) is simply an instance
of Lemma~3.5 in \cite{AloBenCesHau97}, we only prove
(\ref{eq:Lemma-3.5-in-AloBenCesHau97-in-L_p}).
By definition,
$$
\forall 
\mathbf{t}_n = \left( t_i \right)_{1 \leqslant i \leqslant n}
\in \mathcal{T}^n, \;\;
\mathcal{M} \left ( \epsilon, \mathcal{F}, d_{p, \mathbf{t}_n} \right )
= \mathcal{M} \left ( \epsilon, \left . \mathcal{F} \right |_{\mathbf{t}_n},
d_{p, \mathbf{t}_n} \right ),
$$
where $\left . \mathcal{F} \right |_{\mathbf{t}_n}$ is the set of
the restrictions to $\mathbf{t}_n$ of the functions in $\mathcal{F}$.
Let $\mathcal{F}_{\epsilon}$
be, among the subsets of $\left . \mathcal{F} \right |_{\mathbf{t}_n}$
$\epsilon$-separated with respect to the pseudo-metric $d_{p, \mathbf{t}_n}$,
a set of maximal cardinality. By definition,
$$
\left | \mathcal{F}_{\epsilon} \right | =
\mathcal{M} \left ( \epsilon, 
\left . \mathcal{F} \right |_{\mathbf{t}_n}, d_{p, \mathbf{t}_n} \right )
= \mathcal{M} \left ( \epsilon, \mathcal{F}_{\epsilon}, 
d_{p, \mathbf{t}_n} \right ).
$$
At this level, two cases must be considered.

\paragraph{First case}
Suppose that
$\left | \mathcal{F}_{\epsilon} \right | \leqslant
\exp \left ( K_e \left ( p \right ) n \epsilon^{2p} \right )$
where $K_e$ is the function of $p$ defined in 
Lemma~\ref{lemma:lemma-13-in-MenVer03-in-L_p}.
In that case, Lemma~\ref{lemma:lemma-13-in-MenVer03-in-L_p} applies,
and we can set $r$ equal to the smallest admissible value,
$\frac{\ln \left ( \left | \mathcal{F}_{\epsilon} \right | 
\right )} {K_e \left ( p \right ) \epsilon^{2p}}$, 
where $\ln$ is the Neperian (or natural) logarithm.
Consequently, there exists a subvector
$\mathbf{t}_q$ of $\mathbf{t}_n$ of size
\begin{equation}
\label{eq:partial-1-Lemma-3.5-in-AloBenCesHau97-in-L_p}
q \leqslant
\frac{\ln \left ( \left | \mathcal{F}_{\epsilon} \right | \right )} 
{K_e \left ( p \right ) \epsilon^{2p}}
\end{equation}
such that $\mathcal{F}_{\epsilon}$ is
$\left ( \left ( \frac{1}{2} \right )^{\frac{p+1}{p}} \epsilon 
\right )$-separated
with respect to the pseudo-metric $d_{p, \mathbf{t}_q}$,
and thus, since $\min_{p \in \mathbb{N}^*}
\left ( \frac{1}{2} \right )^{\frac{p+1}{p}} = \frac{1}{4}$,
$\frac{\epsilon}{4}$-separated with respect to the same pseudo-metric.
As a consequence,
$$
\left | \mathcal{F}_{\epsilon} \right |
= \mathcal{M} \left ( \frac{\epsilon}{4}, \mathcal{F}_{\epsilon}, 
d_{p, \mathbf{t}_q} \right )
= \mathcal{M} \left ( \frac{\epsilon}{4}, 
\left . \mathcal{F}_{\epsilon} \right |_{\mathbf{t}_q}, 
d_{p, \mathbf{t}_q} \right )
= \left | \left . \mathcal{F}_{\epsilon} \right |_{\mathbf{t}_q} \right |.
$$
For $\eta \in \left ( 0, \frac{\epsilon}{4} \right )$,
let $\left ( \left . \mathcal{F}_{\epsilon} \right |_{\mathbf{t}_q} 
\right )^{\left ( \eta \right )}$
be the image of $\left . \mathcal{F}_{\epsilon} \right |_{\mathbf{t}_q}$ 
by the discretization operator
$\left ( \cdot \right )^{\eta}$.
Since $\left | \left . \mathcal{F}_{\epsilon} \right |_{\mathbf{t}_q} \right |
= \mathcal{M} \left ( \frac{\epsilon}{4}, 
\left . \mathcal{F}_{\epsilon} \right |_{\mathbf{t}_q}, 
d_{p, \mathbf{t}_q} \right )$,
by application of Lemma~\ref{lemma:lemma-3.2.2-in-AloBenCesHau97-in-L_p},
\begin{align*}
\mathcal{M} \left ( \frac{\epsilon}{4}, 
\left . \mathcal{F}_{\epsilon} \right |_{\mathbf{t}_q}, 
d_{p, \mathbf{t}_q} \right )
& = \; \mathcal{M} 
\left ( \frac{\left ( \left ( \frac{\epsilon}{4} \right )^p
- \left ( \eta \right )^p \right )^{\frac{1}{p}}}{2},
\left ( \left . \mathcal{F}_{\epsilon} \right |_{\mathbf{t}_q} 
\right )^{\left ( \eta \right )}, d_{p, \mathbf{t}_q} \right ) \\
& \leqslant \; \mathcal{M} 
\left ( \frac{\epsilon - 4 \eta}{8},
\left ( \left . \mathcal{F}_{\epsilon} \right |_{\mathbf{t}_q} 
\right )^{\left ( \eta \right )}, d_{p, \mathbf{t}_q} \right ),
\end{align*}
where the inequality stems from the fact that
$$
\min_{p \in \mathbb{N}^*} \left ( \left ( \frac{\epsilon}{4} \right )^p
- \left ( \eta \right )^p \right )^{\frac{1}{p}} 
= \frac{\epsilon}{4} - \eta.
$$
For $N \in \mathbb{N}$ satisfying 
$N > \frac{56 M_{\mathcal{F}}}{\epsilon}$, 
let us set $\eta = \frac{2 M_{\mathcal{F}}}{N}$.
Since $\left ( \left . \mathcal{F}_{\epsilon} \right |_{\mathbf{t}_q} 
\right )^{\left ( \frac{2 M_{\mathcal{F}}}{N} \right )}$ 
is a class of functions whose domain has
cardinality $q$ and whose codomain is $\left \{ 2 M_{\mathcal{F}} \frac{j}{N}: 
0 \leqslant j \leqslant N \right \}$, 
Lemma~\ref{lemma:Lemma-8-in-BarLon95-in-L_p} provides us with
$$
\left | \mathcal{F}_{\epsilon} \right | \leqslant
2^{\left ( p+2 \right ) \log_2 \left ( N \right ) + 1} 
\left ( \frac{e \left ( N-1 \right ) q}{d_1} 
\right )^{\left ( p+2 \right ) \log_2 \left ( N \right ) d_1}
$$
where
$d_1 = \left ( \frac{1}{16} \left ( \epsilon - \frac{56 M_{\mathcal{F}}}{N}
\right ) \right )\mbox{-dim} \left ( \left ( \left . \mathcal{F}_{\epsilon} 
\right |_{\mathbf{t}_q} 
\right )^{\left ( \frac{2 M_{\mathcal{F}}}{N} \right )} \right )$.
Thus, making use of the upper bound on $q$ provided
by (\ref{eq:partial-1-Lemma-3.5-in-AloBenCesHau97-in-L_p}),
\begin{align}
\left | \mathcal{F}_{\epsilon} \right | 
& \leqslant \;
2^{\left ( p+2 \right ) \log_2 \left ( N \right ) + 1} 
\left ( \frac{e \left ( N-1 \right ) 
\ln \left ( \left | \mathcal{F}_{\epsilon} \right | \right )}
{K_e \left ( p \right ) \epsilon^{2p} d_1} 
\right )^{\left ( p+2 \right ) \log_2 \left ( N \right ) d_1} \nonumber \\
\label{eq:partial-5-Lemma-3.5-in-AloBenCesHau97-in-L_p}
& \leqslant \;
2^{\left ( p+2 \right ) \log_2 \left ( N \right ) + 1}
\left (
\frac{\ln \left ( \left | \mathcal{F}_{\epsilon} \right | \right )}
{d_1} \right )^{\left ( p+2 \right ) \log_2 \left ( N \right ) d_1}
\left ( \frac{e \left ( N-1 \right )}
{K_e \left ( p \right ) \epsilon^{2p}} \right )^{
\left ( p+2 \right ) \log_2 \left ( N \right ) d_1}.
\end{align}
For all $r \in \mathbb{N}^*$, let $h_r$
be the function on $\left [ 1, +\infty \right )$
mapping $u$ to 
$2^{r-1} r!~u^{\frac{1}{2}} - \ln^r \left ( u \right )$.
The function $h_1$ is positive on its domain and for all $r \in \mathbb{N}^*$,
$h_r \left ( 1 \right ) > 0$.
Since for all $r \geqslant 2$, $h_r' \left ( u \right ) = 
\frac{r}{u} h_{r-1} \left ( u \right )$,
proceeding by induction, one establishes that
all the functions $h_r$ are positive on their domain.
Furthermore, for all $r \in \mathbb{N}^*$, 
$2^{r-1} r! \leqslant r^r$.
Consequently, setting 
$K_{N,p} = \left \lceil \left ( p+2 \right ) 
\log_2 \left ( N \right ) \right \rceil$,
where the ceiling function $\left \lceil \cdot \right \rceil$ is defined by:
$$
\forall u \in \mathbb{R}, \;\; \left \lceil u \right \rceil =
\min \left \{ j \in \mathbb{Z}: \; j \geqslant u \right \},
$$
we obtain
\begin{align}
\left (
\frac{\ln \left ( \left | \mathcal{F}_{\epsilon} \right | \right )}
{d_1} \right )^{\left ( p+2 \right ) \log_2 \left ( N \right ) d_1}
& = \left ( \ln^{\left ( p+2 \right ) \log_2 \left ( N \right )}
\left ( \left | \mathcal{F}_{\epsilon} \right |^{\frac{1}{d_1}} 
\right ) \right )^{d_1} \nonumber \\
& \leqslant \;
\left ( \ln^{K_{N,p}}
\left ( \left | \mathcal{F}_{\epsilon} 
\right |^{\frac{1}{d_1}} \right ) \right )^{d_1} \nonumber \\
\label{eq:partial-6-Lemma-3.5-in-AloBenCesHau97-in-L_p}
& < \;
K_{N,p}^{K_{N,p} d_1}
\left | \mathcal{F}_{\epsilon} \right |^{\frac{1}{2}}.
\end{align}
A substitution of the right-hand side of
(\ref{eq:partial-6-Lemma-3.5-in-AloBenCesHau97-in-L_p})
into (\ref{eq:partial-5-Lemma-3.5-in-AloBenCesHau97-in-L_p}) gives
$$
\left | \mathcal{F}_{\epsilon} \right | 
\leqslant
2^{2 \left ( K_{N,p} + 1 \right )}
\left ( \frac{e \left ( N-1 \right ) K_{N,p}}
{K_e \left ( p \right ) \epsilon^{2p}} \right )^{2 K_{N,p} d_1}.
$$
To bound from above $d_1$, $N$ can be set equal to
$\left \lceil \frac{112 M_{\mathcal{F}}}{\epsilon} \right \rceil$.
Then,
\begin{align}
\label{eq:partial-3-Lemma-3.5-in-AloBenCesHau97-in-L_p}
d_1 & \leqslant \;
\left ( \frac{\epsilon}{32} 
\right )\mbox{-dim} \left ( \left ( \left . \mathcal{F}_{\epsilon} 
\right |_{\mathbf{t}_q} 
\right )^{\left ( \frac{2 M_{\mathcal{F}}}{
\left \lceil \frac{112 M_{\mathcal{F}}}{\epsilon} \right \rceil} \right )} 
\right ) \\
\label{eq:partial-4-Lemma-3.5-in-AloBenCesHau97-in-L_p}
& \leqslant \;
\left ( \frac{\epsilon}{32} - \frac{M_{\mathcal{F}}}{
\left \lceil \frac{112 M_{\mathcal{F}}}{\epsilon} \right \rceil}
\right )\mbox{-dim} \left ( \left . \mathcal{F}_{\epsilon} 
\right |_{\mathbf{t}_q} \right ) \\
& \leqslant \;
\left ( \frac{\epsilon}{32} - \frac{\epsilon}{112}
\right )\mbox{-dim} \left ( \left . \mathcal{F}_{\epsilon} 
\right |_{\mathbf{t}_q} \right ) \nonumber \\
& \leqslant \;
\left ( \frac{\epsilon}{45} \right )\mbox{-dim} \left ( \mathcal{F} \right ).
\nonumber
\end{align}
This sequence of computations makes use three times of
the fact that the fat-shattering dimension is a nonincreasing function
of the margin parameter. The transition from 
(\ref{eq:partial-3-Lemma-3.5-in-AloBenCesHau97-in-L_p}) to
(\ref{eq:partial-4-Lemma-3.5-in-AloBenCesHau97-in-L_p})
is provided by Lemma~\ref{lemma:lemma-3.2.1-in-AloBenCesHau97-in-L_p}.
As a consequence,
$$
\left | \mathcal{F}_{\epsilon} \right | 
\leqslant
2^{2 \left ( \left \lceil \left ( p+2 \right ) 
\log_2 \left ( \left \lceil \frac{112 M_{\mathcal{F}}}{\epsilon} \right \rceil
\right ) \right \rceil + 1 \right )}
\left ( \frac{112 e M_{\mathcal{F}}
\left \lceil \left ( p+2 \right ) 
\log_2 \left ( \left \lceil \frac{112 M_{\mathcal{F}}}{\epsilon} \right \rceil
\right ) \right \rceil}
{K_e \left ( p \right ) \epsilon^{2p+1}} 
\right )^{2 \left \lceil \left ( p+2 \right ) 
\log_2 \left ( \left \lceil \frac{112 M_{\mathcal{F}}}{\epsilon} \right \rceil
\right ) \right \rceil d \left ( \frac{\epsilon}{45} \right )}.
$$
A substitution into the right-hand side of the value of
$K_e \left ( p \right )$ produces for 
$\left | \mathcal{F}_{\epsilon} \right |$, i.e.,
$\mathcal{M} \left ( \epsilon, \mathcal{F}, d_{p, \mathbf{t}_n} \right )$,
the same upper bound
as that announced for $\mathcal{M}_p \left ( \epsilon, \mathcal{F}, n \right )$.
Thus, to conclude the proof of
(\ref{eq:Lemma-3.5-in-AloBenCesHau97-in-L_p})
under the assumption that
$\left | \mathcal{F}_{\epsilon} \right | \leqslant
\exp \left ( K_e \left ( p \right ) n \epsilon^{2p} \right )$,
it suffices to notice that this upper bound does not depend on $\mathbf{t}_n$
(it is even dimension free).

\paragraph{Second case}
Suppose conversely that
$\left | \mathcal{F}_{\epsilon} \right | >
\exp \left ( K_e \left ( p \right ) n \epsilon^{2p} \right )$,
i.e.,
\begin{equation}
\label{eq:partial-2-Lemma-3.5-in-AloBenCesHau97-in-L_p}
n < \frac{\ln \left ( \left | \mathcal{F}_{\epsilon} \right | \right )} 
{K_e \left ( p \right ) \epsilon^{2p}}.
\end{equation}
By application of Lemma~\ref{lemma:lemma-3.2.2-in-AloBenCesHau97-in-L_p},
for $\eta \in \left ( 0, \epsilon \right )$,
\begin{align*}
\left | \mathcal{F}_{\epsilon} \right | 
& = \;
\mathcal{M} 
\left ( \frac{\left ( \epsilon^p - \eta^p \right )^{\frac{1}{p}}}{2},
\left ( \mathcal{F}_{\epsilon} \right )^{(\eta)}, 
d_{p, \mathbf{t}_n} \right ) \\
& \leqslant \;
\mathcal{M} 
\left ( \frac{\epsilon - \eta}{2},
\left ( \mathcal{F}_{\epsilon} \right )^{(\eta)}, 
d_{p, \mathbf{t}_n} \right ).
\end{align*}
For $N \in \mathbb{N}$ satisfying
$N > \frac{14 M_{\mathcal{F}}}{\epsilon}$,
let us set $\eta = \frac{2 M_{\mathcal{F}}}{N}$.
Since
$\left ( \mathcal{F}_{\epsilon} 
\right )^{\left ( \frac{2 M_{\mathcal{F}}}{N} \right )}$
is a class of functions whose domain has
cardinality $n$ and whose codomain is $\left \{ 2 M_{\mathcal{F}} \frac{j}{N}: 
0 \leqslant j \leqslant N \right \}$,
Lemma~\ref{lemma:Lemma-8-in-BarLon95-in-L_p} provides us with
$$
\left | \mathcal{F}_{\epsilon} \right | \leqslant
2^{\left ( p+2 \right ) \log_2 \left ( N \right ) + 1} 
\left ( \frac{e \left ( N-1 \right ) n}{d_2} 
\right )^{\left ( p+2 \right ) \log_2 \left ( N \right ) d_2}
$$
where $d_2 = \left ( \frac{1}{4} \left ( \epsilon 
- \frac{14 M_{\mathcal{F}}}{N} \right ) \right )\mbox{-dim} 
\left ( \left ( \mathcal{F}_{\epsilon} 
\right )^{(\frac{2 M_{\mathcal{F}}}{N})} \right )$.
The substitution of the upper bound on $n$ provided by
(\ref{eq:partial-2-Lemma-3.5-in-AloBenCesHau97-in-L_p})
into this bound produces
\begin{equation}
\label{eq:partial-7-Lemma-3.5-in-AloBenCesHau97-in-L_p}
\left | \mathcal{F}_{\epsilon} \right | 
\leqslant
2^{2 \left ( K_{N,p} + 1 \right )}
\left ( \frac{e \left ( N-1 \right ) K_{N,p}}
{K_e \left ( p \right ) \epsilon^{2p}} \right )^{2 K_{N,p} d_2}.
\end{equation}
To bound from above $d_2$, $N$ can be set equal to
$\left \lceil \frac{28 M_{\mathcal{F}}}{\epsilon} \right \rceil$.
Then, the line of reasoning used for $d_1$ leads to
$$
d_2 \leqslant
\left ( \frac{\epsilon}{12} \right )\mbox{-dim} \left ( \mathcal{F} \right ).
$$
By substitution into (\ref{eq:partial-7-Lemma-3.5-in-AloBenCesHau97-in-L_p})
of the value of $N$ and this upper bound on $d_2$, an upper bound on
$\left | \mathcal{F}_{\epsilon} \right |$ is obtained which is smaller than
that provided by Inequality~(\ref{eq:Lemma-3.5-in-AloBenCesHau97-in-L_p}).
\end{proof}

\subsection{Comparison with the state of the art}

In order to limit the complexity of the formula corresponding
to finite values of $p$
(Inequality~(\ref{eq:Lemma-3.5-in-AloBenCesHau97-in-L_p})),
the constants have systematically been derived by considering 
the ``worst'' case: $p=1$. This implies that better constants
can be obtained by focusing on the value of $p$ of interest.
If the resulting gain is all the more important as this value is large,
it is already noticeable for $p=2$.
The result that compares directly with 
Lemma~\ref{lemma:Lemma-3.5-in-AloBenCesHau97-in-L_p} is
Theorem~3.2 in \cite{Men02}.
As Inequality~(\ref{eq:Lemma-3.5-in-AloBenCesHau97-in-L_p}),
the corresponding bound is dimension free.
The main difference rests in the dependency on
the fat-shattering dimension.
Whereas Inequality~(\ref{eq:Lemma-3.5-in-AloBenCesHau97-in-L_p})
corresponds to a growth rate of the $\epsilon$-entropy
with this dimension which is linear, 
Theorem~3.2 in \cite{Men02} exhibits an additional
logarithmic multiplicative factor.
Focusing on results derived for a specific $L_p$-norm,
the literature provides us with one example of generalized
Sauer-Shelah lemma based on the $L_1$-norm: Lemma~1 in \cite{BarKulPos97}
(whose basic combinatorial result is Lemma~8 in \cite{BarLon95}).
However, this result is not dimension free
(the growth rate of the $\epsilon$-entropy with $n$ is logarithmic).
As for the $L_2$-norm, the state of the art is provided by
Theorem~1 in \cite{MenVer03}. Since its original formulation 
involves unspecified universal constants,
to make comparison possible,
it is given below with explicit constants.

\begin{lemma}[After Theorem~1 in \cite{MenVer03}]
\label{lemma:generalized-Sauer-Shelah-lemma-in-L_2-MenVer03}
Let $\mathcal{F}$ be a class of functions from $\mathcal{T}$ into
$\left [ -M_{\mathcal{F}}, M_{\mathcal{F}} \right ]$
with $M_{\mathcal{F}} \in \mathbb{R}_+^*$.
$\mathcal{F}$ is supposed to be a GC class.
For $\epsilon \in \left ( 0, M_{\mathcal{F}} \right ]$,
let $d \left ( \epsilon \right )
= \epsilon\mbox{-dim} \left ( \mathcal{F} \right )$.
Then for
$\epsilon \in \left ( 0, 2 M_{\mathcal{F}} \right ]$
and $n \in \mathbb{N}^*$,
\begin{equation}
\label{eq:generalized-Sauer-Shelah-lemma-in-L_2-MenVer03}
\mathcal{M}_2 \left ( \epsilon, \mathcal{F}, n \right )
\leqslant \left ( 3584 e
\left ( \frac{2 M_{\mathcal{F}}}{\epsilon} \right )^5
\right )^{4 d \left ( \frac{\epsilon}{96} \right )}.
\end{equation}
\end{lemma}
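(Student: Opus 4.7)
The plan is to reproduce the argument of Theorem~1 in \cite{MenVer03} while tracking every universal constant explicitly. Structurally, the derivation parallels the proof of Lemma~\ref{lemma:Lemma-3.5-in-AloBenCesHau97-in-L_p} specialised to $p = 2$, but with the simplifications available because in the $L_2$ setting the exponent of the fat-shattering dimension can be kept constant rather than scaling with $K_\epsilon(p)$.

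First, I would fix $\mathbf{t}_n \in \mathcal{T}^n$ and let $\mathcal{F}_{\epsilon}$ be a maximal $\epsilon$-separated subset of $\left.\mathcal{F}\right|_{\mathbf{t}_n}$ in the pseudo-metric $d_{2, \mathbf{t}_n}$, so that $|\mathcal{F}_{\epsilon}| = \mathcal{M}(\epsilon, \mathcal{F}, d_{2, \mathbf{t}_n})$. As before, the analysis splits into two cases according to whether $|\mathcal{F}_{\epsilon}|$ exceeds a Mendelson-Vershynin threshold of the form $\exp(c n \epsilon^{4})$. In the bounded case, I would apply Lemma~\ref{lemma:lemma-13-in-MenVer03-in-L_p} with $p = 2$ to extract a subsample $\mathbf{t}_q$ of controlled size on which $\mathcal{F}_{\epsilon}$ remains $(\epsilon/4)$-separated with respect to $d_{2, \mathbf{t}_q}$; an $\eta$-discretization of the restricted class, followed by Lemmas~\ref{lemma:lemma-3.2.2-in-AloBenCesHau97-in-L_p} and~\ref{lemma:Lemma-8-in-BarLon95-in-L_p}, yields a first counting inequality in which $\ln|\mathcal{F}_{\epsilon}|$ still appears on the right-hand side. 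I would remove this self-reference via the auxiliary inequality $\ln^{r}(u) < r^{r} u^{1/2}$ already used in the proof of Lemma~\ref{lemma:Lemma-3.5-in-AloBenCesHau97-in-L_p}, and then invoke Lemma~\ref{lemma:lemma-3.2.1-in-AloBenCesHau97-in-L_p} to replace the fat-shattering dimension of the discretized restricted class by that of $\mathcal{F}$ itself.

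The main engineering task is then to choose the discretization parameter $N$ and the threshold constant $c$ so that three requirements are satisfied simultaneously: the witness scale for the fat-shattering dimension lands at $\epsilon/96$; the exponent collapses to $4 d(\epsilon/96)$ with no residual factor of $K_\epsilon(2)$, which is the specific gain of specialising to $p = 2$; and the remaining multiplicative base reduces to $3584 e (2 M_{\mathcal{F}}/\epsilon)^{5}$, the power $5 = 2p+1$ being inherited from Lemma~\ref{lemma:Lemma-3.5-in-AloBenCesHau97-in-L_p}. This calibration is the main obstacle of the proof: no new combinatorial ingredient is needed, but balancing the losses introduced by probabilistic extraction, $\eta$-discretization, and the binary Sauer-Shelah lemma so that they meet exactly at these numerical targets is delicate.

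The unbounded case is handled symmetrically: the strict inequality $n < \ln(|\mathcal{F}_{\epsilon}|)/(c \epsilon^{4})$ plays the role of the upper bound on $q$, the discretization and Lemma~\ref{lemma:Lemma-8-in-BarLon95-in-L_p} are applied directly on $\mathbf{t}_n$, and one checks that the resulting estimate is dominated by the one obtained in the first case. Since that bound is independent of $\mathbf{t}_n$, taking the supremum in the definition of $\mathcal{M}_2(\epsilon, \mathcal{F}, n)$ yields the announced inequality.
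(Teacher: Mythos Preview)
The paper does not actually prove this lemma: it is stated as a restatement ``with explicit constants'' of Theorem~1 in \cite{MenVer03}, and the paper immediately moves on to comparing it with Lemma~\ref{lemma:Lemma-3.5-in-AloBenCesHau97-in-L_p}. So there is no proof in the paper to compare against; I can only assess whether your outline would succeed.

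There is a genuine gap. Your plan is essentially to rerun the proof of Lemma~\ref{lemma:Lemma-3.5-in-AloBenCesHau97-in-L_p} with $p=2$ and claim that ``the exponent collapses to $4\,d(\epsilon/96)$ with no residual factor of $K_\epsilon(2)$.'' But nothing in that pipeline eliminates $K_\epsilon(2)$. If you follow the steps you list---extraction via Lemma~\ref{lemma:lemma-13-in-MenVer03-in-L_p}, discretisation via Lemma~\ref{lemma:lemma-3.2.2-in-AloBenCesHau97-in-L_p}, counting via Lemma~\ref{lemma:Lemma-8-in-BarLon95-in-L_p}, and then the self-reference trick $\ln^r(u)<r^r u^{1/2}$---you end up with an exponent $2K_{N,2}\,d_1$ where $K_{N,2}=\lceil 4\log_2(N)\rceil$ and $N\asymp M_{\mathcal F}/\epsilon$. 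That is precisely the extra $\log(M_{\mathcal F}/\epsilon)$ factor the paper itself highlights when contrasting Lemma~\ref{lemma:Lemma-3.5-in-AloBenCesHau97-in-L_p} with \cite{MenVer03}: specialising $p=2$ changes the constants inside $K_\epsilon(p)$, not its presence. No ``calibration'' of $N$ and the threshold $c$ can remove a multiplicative $\log_2(N)$ sitting in the exponent.

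The reason Mendelson and Vershynin obtain the clean exponent $4\,d(\epsilon/96)$ is that they do \emph{not} pass through the Bartlett--Long combinatorial lemma (our Lemma~\ref{lemma:Lemma-8-in-BarLon95-in-L_p}). They use a sharper $L_2$-specific counting result (ultimately resting on a Haussler-type estimate for VC classes applied levelwise) which bounds an $\epsilon$-separated set on $q$ points by something of the form $(c/\epsilon)^{c'd}$ with no $\log_2(N)$ in the exponent. Your outline is missing exactly this ingredient; with the lemmas you cite, the best you can reach is a version of Lemma~\ref{lemma:Lemma-3.5-in-AloBenCesHau97-in-L_p} at $p=2$, not Inequality~\eqref{eq:generalized-Sauer-Shelah-lemma-in-L_2-MenVer03}.
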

With this formulation at hand, it appears that 
even without optimizing the constants of 
Inequality~(\ref{eq:Lemma-3.5-in-AloBenCesHau97-in-L_p})
for the case $p=2$, none of the two bounds
is uniformly better than the other. The choice between them
should primarily be based on the behaviour of the fat-shattering dimensions
of interest.

\section{Bound based on the $L_{\infty}$-norm}
\label{sec:L_infty-bound}

The $L_{\infty}$-norm plays a central part in the theory of bounds.
Indeed, one can consider that it is already at the core of the initial result
of Vapnik and Chervonenkis \cite{VapChe71}.
Focusing on margin classifiers, it is the norm used in Bartlett's
seminal article \cite{Bar98}.

\subsection{State of the art}

To the best of our knowledge, the state-of-the-art result is precisely
a multi-class extension of Bartlett's result: Theorem~40 in \cite{Gue07b}.
It makes use of the same margin loss functions, defined as follows.

\begin{definition}[Margin loss functions $\phi_{\infty, \gamma}$]
\label{def:margin-loss-function-L_infty}
For $\gamma \in \left (0, 1 \right ]$,
the margin loss function $\phi_{\infty, \gamma}$ is defined by:
$$
\forall t \in \mathbb{R}, \;\;
\phi_{\infty, \gamma} \left ( t \right ) = 
\nbOne_{\left \{ t < \gamma \right \}}.
$$
\end{definition}
The basic supremum inequality is
a multi-class extension of Lemma~4 in \cite{Bar98},
with the first symmetrization being derived from the basic lemma
of Section~4.5.1 in \cite{Vap98}.

\begin{theorem}[After Theorem~22 in \cite{Gue07b}]
\label{theorem:basic-supremum-bound-uniform-convergence-norm}
Let $\mathcal{G}$ be a class of functions
satisfying Definition~\ref{def:margin-multi-category-classifiers}.
For $\gamma \in \left ( 0, 1 \right ]$,
let $\mathcal{F}_{\mathcal{G}, \gamma}$ be the class of functions deduced
from $\mathcal{G}$ according to
Definition~\ref{def:class-of-regret-functions}.
For a fixed $\gamma \in \left ( 0, 1 \right ]$
and a fixed $\delta \in \left ( 0, 1 \right )$,
with $P^m$-probability at least $1 - \delta$, uniformly for every function
$g \in \mathcal{G}$,
\begin{equation}
\label{eq:basic-supremum-bound-uniform-convergence-norm}
L \left ( g \right ) \leqslant L_{\gamma, m} \left ( g \right )
+ \sqrt{ \frac{2}{m} \left ( \ln \left ( 
\mathcal{N}_{\infty}^{(p)} \left ( \frac{\gamma}{2},
\mathcal{F}_{\mathcal{G}, \gamma},  2m \right ) \right )
+ \ln \left ( \frac{2}{\delta} \right )
\right )} + \frac{1}{m},
\end{equation}
where the margin loss function defining the empirical margin risk
is $\phi_{\infty, \gamma}$
(Definition~\ref{def:margin-loss-function-L_infty}).
\end{theorem}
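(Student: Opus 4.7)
The plan is to establish the bound through the classical three-step Vapnik scheme: symmetrization by an independent ghost sample, discretization via an $L_\infty$-net at scale $\gamma/2$, and concentration by Hoeffding's inequality combined with a union bound.

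First I would exploit the facts that $\phi_{\infty,\gamma}\geq\phi$ and $\phi_{\infty,\gamma}\circ\pi_\gamma=\phi_{\infty,\gamma}$ to work throughout with the class $\mathcal{F}_{\mathcal{G},\gamma}$, whose members take values in $[0,\gamma]$. I would then apply the basic symmetrization lemma (Section~4.5.1 of \cite{Vap98}): introducing an independent ghost $m$-sample $\mathbf{Z}'_m$ and concatenating with $\mathbf{Z}_m$ into $\mathbf{Z}_{2m}$, one has
\[
P^m\!\left(\sup_{g\in\mathcal{G}}\bigl(L(g)-L_{\gamma,m}(g)\bigr)>\epsilon+\tfrac{1}{m}\right)\leq 2\,P^{2m}\!\left(\sup_{g\in\mathcal{G}}\bigl(L'_m(g)-L_{\gamma,m}(g)\bigr)>\epsilon\right),
\]
where $L'_m(g)=\tfrac{1}{m}\sum_{i=1}^{m}\phi\circ f_g(Z'_i)$ is the indicator-loss empirical risk on the ghost half, and the additive $1/m$ absorbs the standard discrete-granularity correction of Vapnik's argument.

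Second, I would condition on $\mathbf{Z}_{2m}$ and discretize $\mathcal{F}_{\mathcal{G},\gamma}$. Let $\bar{\mathcal{F}}\subset\mathcal{F}_{\mathcal{G},\gamma}$ be a proper $(\gamma/2)$-net with respect to $d_{\infty,\mathbf{z}_{2m}}$ of cardinality at most $\mathcal{N}_\infty^{(p)}(\gamma/2,\mathcal{F}_{\mathcal{G},\gamma},2m)$, and for each $g$ let $\bar f\in\bar{\mathcal{F}}$ approximate $f_{g,\gamma}$ within $\gamma/2$ in the empirical $L_\infty$-norm on the $2m$ sample points. The pointwise implications $f_g(z)\leq 0\Rightarrow f_{g,\gamma}(z)=0\Rightarrow\bar f(z)<\gamma/2$ and $\bar f(z)<\gamma/2\Rightarrow f_{g,\gamma}(z)<\gamma\Rightarrow\phi_{\infty,\gamma}(f_g(z))=1$, valid at each point of $\mathbf{Z}_{2m}$, yield the sandwich
\[
L'_m(g)\leq\tfrac{1}{m}\sum_{i=1}^m\phi_{\infty,\gamma/2}(\bar f(Z'_i))\qquad\text{and}\qquad\tfrac{1}{m}\sum_{i=1}^m\phi_{\infty,\gamma/2}(\bar f(Z_i))\leq L_{\gamma,m}(g),
\]
so the supremum over $g$ is controlled by a maximum over the finite family $\bar{\mathcal{F}}$ of the difference of these two empirical averages.

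Third, I would apply Hoeffding's inequality for each fixed $\bar f\in\bar{\mathcal{F}}$. Conditional on $\mathbf{Z}_{2m}$, a random pairwise swap between $Z_i$ and $Z'_i$ leaves the joint law unchanged, so that difference is a mean of $m$ independent zero-mean random variables bounded in $[-1,1]$, and Hoeffding yields the one-sided tail $\exp(-m\epsilon^2/2)$. A union bound over $\bar{\mathcal{F}}$ combined with the factor $2$ from symmetrization gives an upper bound of $2\,\mathcal{N}_\infty^{(p)}(\gamma/2,\mathcal{F}_{\mathcal{G},\gamma},2m)\exp(-m\epsilon^2/2)$; equating this with $\delta$, solving for $\epsilon$, and reinstating the $1/m$ correction from step one recovers the stated bound. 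The main obstacle is the careful two-sided threshold bookkeeping in step two, which forces the covering resolution $\gamma/2$ and the doubled sample size $2m$ that appear in the final expression.
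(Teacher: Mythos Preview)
Your proposal is correct and follows precisely the route the paper indicates: the paper does not supply its own proof of this theorem but cites it as Theorem~22 in \cite{Gue07b}, describing it as ``a multi-class extension of Lemma~4 in \cite{Bar98}, with the first symmetrization being derived from the basic lemma of Section~4.5.1 in \cite{Vap98}.'' Your three steps---Vapnik's ghost-sample symmetrization with the $1/m$ correction, the $L_\infty$ covering at scale $\gamma/2$ with the two-sided threshold sandwich (which is exactly Bartlett's device), and Hoeffding plus a union bound over the net---constitute exactly that argument, and your bookkeeping on the implications $\phi\circ f_g\leq\phi_{\infty,\gamma/2}\circ\bar f\leq\phi_{\infty,\gamma}\circ f_g$ at the $2m$ sample points is sound.
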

The pathway leading from this inequality to Theorem~40 in \cite{Gue07b}
consists in relating the covering number of interest to
a $\gamma$-$\Psi$-dimension (see Definition~28 in \cite{Gue07b})
of a class of vector-valued functions.
The dependency on $C$ varies with the choice of this dimension.
In the case of the dimension which is the easiest to bound from above
(by application of the pigeonhole principle), the margin Natarajan dimension,
it is superlinear.

\subsection{Improved dependency on $C$}
Instead of working with vector-valued functions as in \cite{Gue07b}, 
it is more efficient
to handle separately the classes of component functions.
Starting from 
Inequality~(\ref{eq:basic-supremum-bound-uniform-convergence-norm})
and applying in sequence Lemma~\ref{lemma:from-multivariate-to-univariate-L_p}
(for $p = \infty$),
Lemma~\ref{lemma:Kolmogorov}
and Lemma~\ref{lemma:Lemma-3.5-in-AloBenCesHau97-in-L_p} 
(Lemma~3.5 in \cite{AloBenCesHau97}) produces
the master theorem in the uniform convergence norm.

\begin{theorem}
\label{theorem:final-guaranteed-risk-L_infty}
Let $\mathcal{G}$ be a class of functions
satisfying Definition~\ref{def:margin-multi-category-classifiers}.
For $\epsilon \in \left ( 0, M_{\mathcal{G}} \right ]$,
let $d \left ( \epsilon \right ) = \max_{1 \leqslant k \leqslant C}
\epsilon\mbox{-dim} \left ( \mathcal{G}_k \right )$.
For a fixed $\gamma \in \left ( 0, 1 \right ]$
and a fixed $\delta \in \left ( 0, 1 \right )$,
with $P^m$-probability at least $1 - \delta$, uniformly for every function
$g \in \mathcal{G}$,
$$
L \left ( g \right ) \leqslant
L_{\gamma, m} \left ( g \right )
+ \sqrt{ \frac{2}{m} \left ( 3 C
d \left ( \frac{\gamma}{8} \right )
\ln^2 \left ( \frac{128 M_{\mathcal{G}}^2 m}{\gamma^2} 
\right )
+ \ln \left ( \frac{2}{\delta} \right )
\right )} + \frac{1}{m}.
$$
\end{theorem}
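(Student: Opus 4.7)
The plan is to follow the four-step pipeline announced just above the statement. First, I invoke Theorem~\ref{theorem:basic-supremum-bound-uniform-convergence-norm} to reduce the problem to controlling $\ln \mathcal{N}_\infty^{(p)}(\gamma/2, \mathcal{F}_{\mathcal{G},\gamma}, 2m)$. Second, I apply Lemma~\ref{lemma:from-multivariate-to-univariate-L_p} at $p = +\infty$: since $C^{1/p} = 1$ in that limit, the proper covering number of $\mathcal{F}_{\mathcal{G},\gamma}$ is bounded by the product, over the $C$ component classes, of the proper $(\gamma/2)$-covering numbers of $\mathcal{G}_k$ with respect to $d_{\infty, \mathbf{x}_{2m}}$. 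Taking logarithms turns this product into a sum, and ultimately into a factor $C$ times the worst single contribution. Third, I pass from proper covering to packing via Lemma~\ref{lemma:Kolmogorov} (any maximal $\epsilon$-separated set is a proper $\epsilon$-net). Fourth, I apply Inequality~(\ref{eq:Lemma-3.5-in-AloBenCesHau97}), specialised to $\epsilon = \gamma/2$, $n = 2m$ and $M_{\mathcal{F}} = M_{\mathcal{G}}$, to bound each $\mathcal{M}_\infty(\gamma/2, \mathcal{G}_k, 2m)$.

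After these substitutions, the dominant term in $\sum_{k=1}^C \ln \mathcal{M}_\infty(\gamma/2, \mathcal{G}_k, 2m)$ has the shape
$$
C\,d\!\left(\tfrac{\gamma}{8}\right)\,\log_2\!\left(\tfrac{16\,M_{\mathcal{G}}\,e\,m}{d(\gamma/8)\,\gamma}\right)\,\ln\!\left(\tfrac{128\,M_{\mathcal{G}}^2\,m}{\gamma^2}\right),
$$
while the $C$-fold copy of the leading ``$2$''-prefactor merely contributes an additive $C\ln 2$. Using the hypotheses $M_{\mathcal{G}}\geqslant 1$, $\gamma\leqslant 1$, and the harmless assumption $d(\gamma/8)\geqslant 1$, the argument $16\,M_{\mathcal{G}}\,e\,m/(d\gamma)$ stays below $128\,M_{\mathcal{G}}^2\,m/\gamma^2$, so the $\log_2$ and the $\ln$ factors can be consolidated into a single $\ln^2(128\,M_{\mathcal{G}}^2\,m/\gamma^2)$ up to the factor $1/\ln 2$.

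The only genuine obstacle is the constant tracking at this last stage: one has to verify that $1/\ln 2 \approx 1.443$, the ``$2$''-prefactor from Inequality~(\ref{eq:Lemma-3.5-in-AloBenCesHau97}), the $C\ln 2$ slack from the $C$-fold product, the additive $\ln(2/\delta)$ term and the crude rounding $d \geqslant 1$ all fit together inside the single numerical constant $3$ multiplying $C\,d(\gamma/8)\,\ln^2(\cdot)$ in the statement. Since $L := \ln(128\,M_{\mathcal{G}}^2 m / \gamma^2)$ already satisfies $L^2 \gtrsim \ln^2(128)$ in the regime $m > C \geqslant 3$ and $\gamma \in (0,1]$, this verification is routine, and substitution of the final bound into Inequality~(\ref{eq:basic-supremum-bound-uniform-convergence-norm}) yields the announced guaranteed risk.
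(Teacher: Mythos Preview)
Your proof is correct and follows exactly the four-step pipeline of the paper's own argument: Theorem~\ref{theorem:basic-supremum-bound-uniform-convergence-norm}, then Lemma~\ref{lemma:from-multivariate-to-univariate-L_p} at $p=\infty$, then Lemma~\ref{lemma:Kolmogorov}, then Inequality~(\ref{eq:Lemma-3.5-in-AloBenCesHau97}), with the same key simplification $\log_2\!\big(16 M_{\mathcal{G}} e m /(d_k\gamma)\big)\leqslant \tfrac{1}{\ln 2}\ln\!\big(128 M_{\mathcal{G}}^2 m/\gamma^2\big)$ and the same absorption $2/\ln 2<3$. One small slip in your write-up: the term $\ln(2/\delta)$ does \emph{not} need to be absorbed into the constant $3$---it is kept as a separate additive term inside the square root in the final bound, exactly as in Inequality~(\ref{eq:basic-supremum-bound-uniform-convergence-norm}).
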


\begin{proof}
The sketch of the proof has been given at the beginning of the subsection.
The detail makes use of the fact that
$$
\forall k \in \ieg 1, C \ied, \;\; \log_2 \left ( 
\frac{16 M_{\mathcal{G}} e m}
{\left ( \frac{\gamma}{8} \right )\mbox{-dim} \left ( \mathcal{G}_k \right )
\gamma} \right ) \leqslant \frac{1}{\ln \left ( 2 \right )}
\ln \left ( \frac{128 M_{\mathcal{G}}^2 m}{\gamma^2} \right ).
$$
Thus,
\begin{align*}
L \left ( g \right ) & \leqslant \;
L_{\gamma, m} \left ( g \right )
+ \sqrt{ \frac{2}{m} \left ( \sum_{k=1}^C \ln \left ( 
\mathcal{N}_{\infty}^{(p)} \left ( \frac{\gamma}{2},
\mathcal{G}_k,  2m \right ) \right )
+ \ln \left ( \frac{2}{\delta} \right )
\right )} + \frac{1}{m} \\
& \leqslant \;
L_{\gamma, m} \left ( g \right )
+ \sqrt{ \frac{2}{m} \left ( \frac{2}{\ln \left ( 2 \right )}
\ln \left ( \frac{128 M_{\mathcal{G}}^2 m}{\gamma^2} \right )
\sum_{k=1}^C 
\left ( \frac{\gamma}{8} \right )\mbox{-dim} \left ( \mathcal{G}_k \right )
\ln \left ( 
\frac{16 M_{\mathcal{G}} e m}
{\left ( \frac{\gamma}{8} \right )\mbox{-dim} \left ( \mathcal{G}_k \right )
\gamma} \right )
+ \ln \left ( \frac{2}{\delta} \right )
\right )} + \frac{1}{m} \\
& \leqslant \;
L_{\gamma, m} \left ( g \right )
+ \sqrt{ \frac{2}{m} \left ( 3
\ln^2 \left ( \frac{128 M_{\mathcal{G}}^2 m}{\gamma^2} \right )
\sum_{k=1}^C 
\left ( \frac{\gamma}{8} \right )\mbox{-dim} \left ( \mathcal{G}_k \right )
+ \ln \left ( \frac{2}{\delta} \right )
\right )} + \frac{1}{m} \\
& \leqslant \; 
L_{\gamma, m} \left ( g \right )
+ \sqrt{ \frac{2}{m} \left ( 3 C
d \left ( \frac{\gamma}{8} \right )
\ln^2 \left ( \frac{128 M_{\mathcal{G}}^2 m}{\gamma^2} 
\right )
+ \ln \left ( \frac{2}{\delta} \right )
\right )} + \frac{1}{m}.
\end{align*}
\end{proof}

\subsection{Discussion}
Under the assumption that $d \left ( \epsilon \right )$ does not depend on $C$,
Theorem~\ref{theorem:final-guaranteed-risk-L_infty}
provides a guaranteed risk whose control term varies with $C$ and $m$
as a $O\left ( \ln \left ( m \right ) \sqrt{\frac{C}{m}} \right )$.
To sum up, the new bound exhibits the convergence rate of
Theorem~40 in \cite{Gue07b}, whereas its control term grows
only as the square root of $C$. Note that Lemma~19 in \cite{Zha04},
which provides a bound with the same growth,
holds for kernel multi-category classification methods only.
We now establish an improvement of this kind with the $L_2$-norm.

\section{Bound based on the $L_2$-norm}
\label{sec:L_2-bound}

As in the case of the uniform convergence norm,
the state-of-the-art result provides us not only with an element of comparison,
but also with a starting point for the derivation of our guaranteed risk.

\subsection{State of the art}

The sharpest bound in the $L_2$-norm is Theorem~3 in \cite{KuzMohSye14}.
The margin loss function involved in this result is a standard one,
the parameterized truncated hinge loss
(that satisfies both Definition~\ref{def:margin-loss-functions}
and the definition used by Koltchinskii and Panchenko in \cite{KolPan02}).

\begin{definition}[Parameterized truncated hinge loss $\phi_{2, \gamma}$,
Definition~4.3 in \cite{MohRosTal12}]
\label{def:margin-loss-function-L_2}
For $\gamma \in \left (0, 1 \right ]$,
the {\em parameterized truncated hinge loss} $\phi_{2, \gamma}$ is defined by:
$$
\forall t \in \mathbb{R}, \;\;
\phi_{2, \gamma} \left ( t \right ) = \nbOne_{\left \{ t \leqslant 0 \right \}}
+ \left ( 1 - \frac{t}{\gamma} \right )
\nbOne_{\left \{ t \in \left ( 0, \gamma \right ] \right \}}.
$$
\end{definition}
This guaranteed risk is built upon a basic supremum inequality which is
a partial result in the proof of Theorem~8.1 in \cite{MohRosTal12}
(with $\mathcal{F}_{\mathcal{G}}$ replaced with
$\mathcal{F}_{\mathcal{G}, \gamma}$). 

\begin{theorem}[After Theorem~8.1 in \cite{MohRosTal12}]
\label{theorem:basic-supremum-bound-L_2-norm}
Let $\mathcal{G}$ be a class of functions
satisfying Definition~\ref{def:margin-multi-category-classifiers}.
For $\gamma \in \left ( 0, 1 \right ]$,
let $\mathcal{F}_{\mathcal{G}, \gamma}$ be the class of functions deduced
from $\mathcal{G}$ according to
Definition~\ref{def:class-of-regret-functions}.
For a fixed $\gamma \in \left ( 0, 1 \right ]$
and a fixed $\delta \in \left ( 0, 1 \right )$,
with $P^m$-probability at least $1 - \delta$, uniformly for every function
$g \in \mathcal{G}$,
$$
L \left ( g \right ) \leqslant
L_{\gamma, m} \left ( g \right )
+ \frac{2}{\gamma} 
R_m \left ( \mathcal{F}_{\mathcal{G}, \gamma} \right )
+ \sqrt{\frac{\ln \left ( \frac{1}{\delta} \right ) }{2m}}
$$
where the margin loss function defining the empirical margin risk
is the parameterized truncated hinge loss 
(Definition~\ref{def:margin-loss-function-L_2}).
\end{theorem}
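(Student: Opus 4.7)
The plan is to follow the classical Rademacher-complexity pathway of Mohri, Rostamizadeh and Talwalkar, adapted to the margin multi-category setting via the piecewise-linear squashing function $\pi_{\gamma}$. First I would observe that $\phi_{2,\gamma}$ dominates the indicator loss $\phi$ pointwise (it equals $1$ on $(-\infty,0]$, decreases linearly to $0$ on $(0,\gamma]$, and is zero thereafter), so $L(g) \leqslant L_{\gamma}(g)$. The key rewriting uses the identity $\phi_{2,\gamma} \circ \pi_{\gamma} = \phi_{2,\gamma}$, verified region by region on $t \leqslant 0$, $t \in (0,\gamma]$ and $t > \gamma$. This identity expresses $L_{\gamma}(g)$ and $L_{\gamma,m}(g)$ as the expectation and empirical average of $\phi_{2,\gamma} \circ f_{g,\gamma}$, which is precisely what permits replacing $\mathcal{F}_{\mathcal{G}}$ by $\mathcal{F}_{\mathcal{G},\gamma}$ in the final Rademacher complexity, as announced in the parenthetical remark preceding the statement.

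Next I would control the one-sided uniform deviation
$$\Phi(\mathbf{Z}_m) \;=\; \sup_{g \in \mathcal{G}} \bigl( L_{\gamma}(g) - L_{\gamma,m}(g) \bigr).$$
Since $\phi_{2,\gamma}$ takes values in $[0,1]$, swapping a single $Z_i$ in the sample changes $\Phi$ by at most $1/m$, so McDiarmid's bounded differences inequality yields, with $P^m$-probability at least $1-\delta$,
$$\Phi(\mathbf{Z}_m) \;\leqslant\; \mathbb{E}\bigl[\Phi(\mathbf{Z}_m)\bigr] + \sqrt{\frac{\ln(1/\delta)}{2m}}.$$
The standard ghost-sample symmetrization (using the supremum convention recalled after Definition~\ref{def:Rademacher-complexity}) then produces
$$\mathbb{E}\bigl[\Phi(\mathbf{Z}_m)\bigr] \;\leqslant\; 2\, R_m\bigl(\phi_{2,\gamma} \circ \mathcal{F}_{\mathcal{G},\gamma}\bigr).$$

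Finally, Talagrand's Rademacher contraction principle strips off the loss. The function $\phi_{2,\gamma}$ has slope $-1/\gamma$ on $(0,\gamma]$ and is constant elsewhere, hence is $(1/\gamma)$-Lipschitz, so
$$R_m\bigl(\phi_{2,\gamma} \circ \mathcal{F}_{\mathcal{G},\gamma}\bigr) \;\leqslant\; \frac{1}{\gamma}\, R_m\bigl(\mathcal{F}_{\mathcal{G},\gamma}\bigr).$$
Chaining this with the symmetrization and McDiarmid bounds, together with $L(g) \leqslant L_{\gamma}(g)$, yields the announced inequality. The only point that requires a little care is the application of the contraction principle in the form that does not require $\phi_{2,\gamma}(0)=0$ (one has $\phi_{2,\gamma}(0)=1$); this is the standard Ledoux--Talagrand statement in which the additive constant brought by the nonzero value at the origin vanishes under the expectation over the symmetric signs $\sigma_i$. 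Every other step is routine, so this is the step that deserves explicit verification.
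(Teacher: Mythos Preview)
Your proposal is correct and is precisely the standard argument from Mohri--Rostamizadeh--Talwalkar that the paper invokes. The paper does not supply its own proof of this theorem: it states the result as ``a partial result in the proof of Theorem~8.1 in \cite{MohRosTal12} (with $\mathcal{F}_{\mathcal{G}}$ replaced with $\mathcal{F}_{\mathcal{G}, \gamma}$)'', and your write-up (domination $\phi \leqslant \phi_{2,\gamma}$, the identity $\phi_{2,\gamma}\circ\pi_{\gamma}=\phi_{2,\gamma}$ to pass to $\mathcal{F}_{\mathcal{G},\gamma}$, McDiarmid, symmetrization, Talagrand contraction with Lipschitz constant $1/\gamma$) reproduces exactly that cited argument.
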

Theorem~3 in \cite{KuzMohSye14} stems from
Theorem~\ref{theorem:basic-supremum-bound-L_2-norm}
by application of the following lemma.

\begin{lemma}
Let $\mathcal{G}$ be a class of functions
satisfying Definition~\ref{def:margin-multi-category-classifiers}.
For $\gamma \in \left ( 0, 1 \right ]$,
let $\mathcal{F}_{\mathcal{G}, \gamma}$ be the class of functions deduced
from $\mathcal{G}$ according to
Definition~\ref{def:class-of-regret-functions}. Then
\begin{equation}
\label{eq:K-M-S}
R_m \left ( \mathcal{F}_{\mathcal{G}, \gamma} \right )
\leqslant C R_m \left ( \bigcup_{k=1}^C \mathcal{G}_k \right ).
\end{equation}
\end{lemma}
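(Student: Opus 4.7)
The plan is to combine a Lipschitz contraction with the product structure of $\mathcal{G}$ through an additive decomposition of $f_g$. First, I would remove the squashing function: since $\pi_{\gamma}$ is $1$-Lipschitz with $\pi_{\gamma} \left ( 0 \right ) = 0$, the Ledoux--Talagrand contraction principle yields $R_m \left ( \mathcal{F}_{\mathcal{G}, \gamma} \right ) \leqslant R_m \left ( \mathcal{F}_{\mathcal{G}} \right )$. Then, exploiting the identity $f_g \left ( x, y \right ) = \frac{1}{2} \left ( g_y \left ( x \right ) - \max_{l \neq y} g_l \left ( x \right ) \right )$, the subadditivity of the supremum together with the symmetry of the Rademacher sequence gives
$$
R_m \left ( \mathcal{F}_{\mathcal{G}} \right ) \leqslant \frac{1}{2} \left [ R_m \left ( \mathcal{H}_1 \right ) + R_m \left ( \mathcal{H}_2 \right ) \right ],
$$
where $\mathcal{H}_1 = \left \{ \left ( x, y \right ) \mapsto g_y \left ( x \right ) : g \in \mathcal{G} \right \}$ and $\mathcal{H}_2 = \left \{ \left ( x, y \right ) \mapsto \max_{l \neq y} g_l \left ( x \right ) : g \in \mathcal{G} \right \}$. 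It remains to show that each of $R_m \left ( \mathcal{H}_1 \right )$ and $R_m \left ( \mathcal{H}_2 \right )$ is at most $C R_m \left ( \bigcup_{k=1}^C \mathcal{G}_k \right )$.

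For $\mathcal{H}_1$, I would use the identity $g_{Y_i} \left ( X_i \right ) = \sum_{k=1}^C \nbOne_{\{ Y_i = k \}} g_k \left ( X_i \right )$ together with the product structure $\mathcal{G} = \prod_{k=1}^C \mathcal{G}_k$ to split the inner supremum into a sum over $k$, and then apply Ledoux--Talagrand contraction with the $1$-Lipschitz, zero-preserving scalar map $t \mapsto \nbOne_{\{ Y_i = k \}} t$ for each $k$. Summing over $k$ and using monotonicity of $R_m$ with respect to class inclusion yields $R_m \left ( \mathcal{H}_1 \right ) \leqslant \sum_{k=1}^C R_m \left ( \mathcal{G}_k \right ) \leqslant C R_m \left ( \bigcup_{k=1}^C \mathcal{G}_k \right )$. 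For $\mathcal{H}_2$, the same strategy applies with the map $\left ( u_1, \ldots, u_C \right ) \mapsto \max_{l \neq y} u_l$, which is $1$-Lipschitz in $\ell_{\infty}$ and hence in $\ell_2$, replacing the coordinate projection; combining a vector-valued Rademacher contraction with the product structure of $\mathcal{G}$ produces the analogous bound.

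The main obstacle will be obtaining exactly the factor $C$ in the bound on $R_m \left ( \mathcal{H}_2 \right )$: a direct application of a Maurer-type vector contraction introduces a spurious constant (typically $\sqrt{2}$). Eliminating it requires exploiting that at each $\left ( X_i, Y_i \right )$ the max depends on only $C-1$ of the $C$ components---for instance, by partitioning the indices $i \in \ieg 1, m \ied$ according to $\arg\max_{l \neq Y_i} g_l \left ( X_i \right )$ and applying scalar contractions with appropriate indicator maps on each block. Once both pieces are in hand, summing them and halving yields the claim.
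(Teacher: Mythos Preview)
The paper does not actually prove this lemma: it is stated as the key step behind Theorem~3 in \cite{KuzMohSye14}, with no proof environment following it. So there is no ``paper's own proof'' to compare against; one can only assess your argument on its merits.

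Your first three steps are sound. The removal of $\pi_{\gamma}$ by Ledoux--Talagrand is correct, the splitting of $R_m \left ( \mathcal{F}_{\mathcal{G}} \right )$ into $\frac{1}{2} \left [ R_m \left ( \mathcal{H}_1 \right ) + R_m \left ( \mathcal{H}_2 \right ) \right ]$ is fine, and the treatment of $\mathcal{H}_1$ via the product structure and the coordinatewise contraction $t \mapsto \nbOne_{\left \{ Y_i = k \right \}} t$ yields exactly $R_m \left ( \mathcal{H}_1 \right ) \leqslant \sum_{k=1}^C R_m \left ( \mathcal{G}_k \right ) \leqslant C R_m \left ( \bigcup_k \mathcal{G}_k \right )$.

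The genuine gap is in your plan for $\mathcal{H}_2$. You correctly identify that a Maurer-type vector contraction costs a spurious $\sqrt{2}$. But your proposed workaround---partitioning the indices $i$ according to $\argmax_{l \neq Y_i} g_l \left ( X_i \right )$---cannot succeed, because this partition depends on the very function $g$ over which you are taking the supremum. You cannot fix the blocks before the sup, and taking a further supremum over all $C^m$ partitions destroys the bound. The standard constant-free argument instead exploits the identity $\max \left ( a, b \right ) = \frac{1}{2} \left ( a + b + \left | a - b \right | \right )$ recursively: at each step the scalar contraction $t \mapsto \left | t \right |$ (which is $1$-Lipschitz and fixes $0$) handles the absolute-value term, and the product structure handles the sum, giving $R_m \left ( \max_{1 \leqslant k \leqslant C} \tilde{\mathcal{G}}_k \right ) \leqslant \sum_{k=1}^C R_m \left ( \tilde{\mathcal{G}}_k \right )$ without any extra constant. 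Applying this with $\tilde{g}_k \left ( x, y \right ) = \nbOne_{\left \{ k \neq y \right \}} g_k \left ( x \right ) - M_{\mathcal{G}} \nbOne_{\left \{ k = y \right \}}$ (the shift is a fixed function, hence harmless for $R_m$) and one more coordinatewise contraction gives $R_m \left ( \mathcal{H}_2 \right ) \leqslant C R_m \left ( \bigcup_k \mathcal{G}_k \right )$, which closes the proof with the exact factor $C$.
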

Many margin classifiers, including neural networks and kernel machines,
satisfy the additional property that all the classes of component
functions are identical,
so that the growth with $C$ of the upper bound on
$R_m \left ( \mathcal{F}_{\mathcal{G}, \gamma} \right )$
provided by (\ref{eq:K-M-S}) is linear.
Furthermore, if the classifier is specifically a kernel machine,
then it is well known that by
combining the reproducing property with the Cauchy-Schwarz
inequality, it is possible to obtain an upper bound on the Rademacher
complexity which is a $O \left ( m^{-\frac{1}{2}} \right )$
(see for instance Lemma~22 in \cite{BarMen02}).
Thus, for kernel machines, the control term of Kuznetsov's bound
is a $O \left ( \frac{C}{\sqrt{m}} \right )$.
Kernel machines (with bounded range) satisfy
Definition~\ref{def:margin-multi-category-classifiers}.
This is easy to establish thanks to the characterization
of the GC classes provided by Theorem~2.5 in \cite{AloBenCesHau97}.
The finiteness of the $\gamma$-dimension
of a linear separator in a reproducing kernel Hilbert space
is a well-known result, which appears, for instance,
as a consequence of Theorem~4.6 in \cite{BarSha99}.
To sum up, the state-of-the-art result is a guaranteed risk
whose control term is at best a $O \left ( \frac{C}{\sqrt{m}} \right )$,
for a specific family of classifiers among those satisfying
Definition~\ref{def:margin-multi-category-classifiers}.

\subsection{Improved dependency on $C$}
\label{sec:chaining-and-decomposition}

Several results are available to bound from above
the expected suprema of empirical processes
(see for instance Chapters~1, 2, and 6 of \cite{Mas07}).
We resort to the standard approach, especially efficient
in the case of Rademacher processes,
the application of Dudley's chaining method \cite{Dud67}.

\begin{theorem}[Chained bound on the Rademacher complexity
of $\mathcal{F}_{\mathcal{G}, \gamma}$]
\label{theorem:final-guaranteed-risk-L_2}
Let $\mathcal{G}$ be a class of functions
satisfying Definition~\ref{def:margin-multi-category-classifiers}.
For $\gamma \in \left ( 0, 1 \right ]$,
let $\mathcal{F}_{\mathcal{G}, \gamma}$ be the class of functions deduced
from $\mathcal{G}$ according to
Definition~\ref{def:class-of-regret-functions}.
For $\epsilon \in \left ( 0, M_{\mathcal{G}} \right ]$, let
$d \left ( \epsilon \right ) = \max_{1 \leqslant k \leqslant C}
\epsilon\mbox{-dim} \left ( \mathcal{G}_k \right )$.
Let $h$ be a positive and decreasing function on $\mathbb{N}$ such that
$h \left ( 0 \right ) \geqslant \gamma$ and 
$h \left ( 1 \right ) \leqslant 2 M_{\mathcal{G}} \sqrt{C}$.
Then for all $N \in \mathbb{N}^*$,
\begin{equation}
\label{eq:chaining-sum-final}
R_m \left ( \mathcal{F}_{\mathcal{G}, \gamma} \right )
\leqslant
h \left ( N \right )
+ 4 \sqrt{\frac{5 C}{m}} 
\sum_{j=1}^N \left ( h \left ( j \right ) + h \left ( j-1 \right ) \right )
\sqrt{d \left ( \frac{h \left ( j \right )}{96 \sqrt{C}} \right )
\ln \left ( \frac{14 M_{\mathcal{G}} \sqrt{C}}{h \left ( j \right )} \right )}.
\end{equation}
\end{theorem}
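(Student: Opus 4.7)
My plan is to derive Inequality~(\ref{eq:chaining-sum-final}) by a Dudley chaining argument applied to the empirical Rademacher process indexed by $\mathcal{F}_{\mathcal{G}, \gamma}$, with the per-scale covering-number input supplied by the decomposition lemma (Lemma~\ref{lemma:from-multivariate-to-univariate-L_p}) combined with the $L_2$-norm Sauer--Shelah bound (Lemma~\ref{lemma:generalized-Sauer-Shelah-lemma-in-L_2-MenVer03}).

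First I would fix a realization $\mathbf{z}_m$ of the sample and view $\mathcal{F}_{\mathcal{G}, \gamma}$ as a subset of the pseudo-metric space $\left ( \mathcal{F}_{\mathcal{G}, \gamma}, d_{2, \mathbf{z}_m} \right )$. For each $j \in \ieg 0, N \ied$ I pick an $h(j)$-net $\mathcal{F}_j$ of minimal cardinality together with a nearest-element operator $\pi_j$. Since $h(0) \geqslant \gamma$ upper-bounds the $L_2$-diameter (the functions in $\mathcal{F}_{\mathcal{G}, \gamma}$ take values in $\left [ 0, \gamma \right ]$), one may take $\left | \mathcal{F}_0 \right | = 1$. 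Telescoping $f = \pi_N(f) + \sum_{j=1}^N \left ( \pi_j(f) - \pi_{j-1}(f) \right )$, the triangle inequality bounds each increment by $h(j) + h(j-1)$ in $d_{2, \mathbf{z}_m}$, and each ranges over at most $\left | \mathcal{F}_j \right | \cdot \left | \mathcal{F}_{j-1} \right | \leqslant \left | \mathcal{F}_j \right |^2$ values. Applying Massart's finite-class lemma at step $j$ and bounding the terminal residual by Cauchy--Schwarz at scale $h(N)$ yields $\hat R_m \left ( \mathcal{F}_{\mathcal{G}, \gamma} \right ) \leqslant h(N) + (2 / \sqrt{m}) \sum_{j=1}^N (h(j) + h(j-1)) \sqrt{\ln \left | \mathcal{F}_j \right |}$; the same inequality then holds for $R_m$ after replacing $\left | \mathcal{F}_j \right |$ by the uniform covering number $\mathcal{N}_2^{(p)}(h(j), \mathcal{F}_{\mathcal{G}, \gamma}, m)$.

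Next, I would invoke Lemma~\ref{lemma:from-multivariate-to-univariate-L_p} at $p = 2$ to bound this log covering number by $C \ln \mathcal{N}_{2, \max}^{(p)}(h(j)/\sqrt{C}, m)$ and then apply Lemma~\ref{lemma:generalized-Sauer-Shelah-lemma-in-L_2-MenVer03} on each class $\mathcal{G}_k$. The hypothesis $h(1) \leqslant 2 M_{\mathcal{G}} \sqrt{C}$ is exactly what ensures that Lemma~\ref{lemma:generalized-Sauer-Shelah-lemma-in-L_2-MenVer03} is nontrivial at every scale $h(j)$ with $j \geqslant 1$, delivering a per-coordinate log covering number of order $d(h(j)/(96 \sqrt{C})) \ln(M_{\mathcal{G}} \sqrt{C}/h(j))$. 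Inserting this estimate under the square root of the chaining sum extracts the $\sqrt{C}$ from the $C$-fold sum and yields precisely the target summand $(h(j) + h(j-1)) \sqrt{d(h(j)/(96 \sqrt{C})) \ln(14 M_{\mathcal{G}}\sqrt{C}/h(j))}$ with prefactor $\sqrt{C/m}$ times a numerical constant.

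The main obstacle is the constant-tracking step: (i) the routine passage between ordinary, proper, and packing numbers (Lemma~\ref{lemma:from-multivariate-to-univariate-L_p} uses proper covers, Massart's lemma needs only ordinary covers, and Lemma~\ref{lemma:generalized-Sauer-Shelah-lemma-in-L_2-MenVer03} is stated in terms of packing numbers), each conversion costing only bounded absolute constants; and (ii) the simplification of $\ln \left ( 3584 e \left ( 2 M_{\mathcal{G}} \sqrt{C}/h(j) \right )^5 \right )$ into a clean multiple of $\ln \left ( 14 M_{\mathcal{G}} \sqrt{C}/h(j) \right )$, which is made possible by $7^5 > 3584 e$. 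All numerical factors should coalesce into the announced prefactor $4\sqrt{5}$. This book-keeping is the delicate part, but structurally the proof is a straightforward chaining.
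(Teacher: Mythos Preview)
Your proposal is correct and follows essentially the same route as the paper: apply the Dudley chaining bound (Theorem~\ref{theorem:Dudley's-metric-entropy-bound}) using $\text{diam}(\mathcal{F}_{\mathcal{G},\gamma}) \leqslant \gamma \leqslant h(0)$, then successively invoke Lemma~\ref{lemma:from-multivariate-to-univariate-L_p} at $p=2$, Lemma~\ref{lemma:Kolmogorov} for the covering/packing conversion, and Lemma~\ref{lemma:generalized-Sauer-Shelah-lemma-in-L_2-MenVer03} with $\epsilon = h(j)/\sqrt{C}$, bounding $3584e$ by $7^5$ to obtain the $\ln(14 M_{\mathcal{G}}\sqrt{C}/h(j))$ factor. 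Your constant bookkeeping is also on target: the $2$ from chaining, the $C$ from decomposition, and the $4 \cdot 5$ from the Mendelson--Vershynin exponent combine to give the prefactor $4\sqrt{5C/m}$.
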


\begin{proof}
The initial part of the proof of Formula~(\ref{eq:chaining-sum-final})
is the application of
Theorem~\ref{theorem:Dudley's-metric-entropy-bound}.
Note that 
$\text{diam} \left ( \mathcal{F}_{\mathcal{G}, \gamma} \right ) 
\leqslant \gamma$, justifying the hypothesis on $h \left ( 0 \right )$.
An advantage of working with $\mathcal{F}_{\mathcal{G}, \gamma}$ 
instead of $\mathcal{F}_{\mathcal{G}}$ (directly) has thus been highlighted.
The end of the proof consists in applying in sequence
Lemma~\ref{lemma:from-multivariate-to-univariate-L_p} (for $p=2$),
Lemma~\ref{lemma:Kolmogorov} and 
Lemma~\ref{lemma:generalized-Sauer-Shelah-lemma-in-L_2-MenVer03}
(with $\epsilon = \frac{h \left ( j \right )}{\sqrt{C}}$
and $3584 e$ bounded from above by $7^5$).
\end{proof}
Thanks to the choice $h \left ( j \right ) = 2^{-j} \sqrt{C} \gamma$, 
under the assumption that $d \left ( \epsilon \right )$
does not depend on $C$, then Theorem~\ref{theorem:final-guaranteed-risk-L_2}
provides a guaranteed risk whose control term grows linearly with $C$,
a dependency at least as good as that of Theorem~3 in \cite{KuzMohSye14}.
The improvement announced results from substituting to the hypothesis of
GC classes a slightly stronger one.

\begin{hypothesis}
\label{hypothesis:restriction-gamma-dim}
We consider classes of functions $\mathcal{G}$
satisfying Definition~\ref{def:margin-multi-category-classifiers}
plus the fact that there exists a pair
$\left ( d_{\mathcal{G}}, K_{\mathcal{G}} \right )
\in \mathbb{N}^* \times \mathbb{R}_+^*$ such that 
\begin{equation}
\label{eq:gamma-dim-bound}
\forall \epsilon \in \left ( 0, M_{\mathcal{G}} \right ], \;\; 
\max_{1 \leqslant k \leqslant C}
\epsilon\mbox{-dim} \left ( \mathcal{G}_k \right )
\leqslant K_{\mathcal{G}} \epsilon^{- d_{\mathcal{G}}}.
\end{equation}
\end{hypothesis}
If Hypothesis~\ref{hypothesis:restriction-gamma-dim} is satisfied,
then the classes $\mathcal{G}_k$ are 
{\em universal Donsker classes} \cite{Men02}.
Theorem~4.6 in \cite{BarSha99} tells us that it is the case,
with $d_{\mathcal{G}} = 2$, if each of the classes $\mathcal{G}_k$
corresponds to the class of functions computed by a support vector machine
(SVM) \cite{CorVap95}. As a consequence, this is the case
(with $d_{\mathcal{G}} = 2$) if $\mathcal{G}$ is the class of functions
computed by a multi-class SVM
\cite{Gue12,LeiDogBinKlo15,DogGlaIge16}.

\begin{theorem}
\label{theorem:dependency-on-m-and-gamma-L_2-norm}
Let $\mathcal{G}$ be a class of functions
satisfying Hypothesis~\ref{hypothesis:restriction-gamma-dim}.
For $\gamma \in \left ( 0, 1 \right ]$,
let $\mathcal{F}_{\mathcal{G}, \gamma}$ be the class of functions deduced
from $\mathcal{G}$ according to
Definition~\ref{def:class-of-regret-functions}.

\noindent If $d_{\mathcal{G}} = 1$, then
\begin{equation}
\label{eq:bound-on-Rademacher-complexity-1}
R_m \left ( \mathcal{F}_{\mathcal{G}, \gamma} \right ) 
\leqslant 
160 \sqrt{\frac{30 K_{\mathcal{G}} \gamma}{m}} C^{\frac{3}{4}}
\left [
\sqrt{\frac{\ln \left ( F \left ( C \right ) \right )}{2}}
+ \sqrt{\frac{\pi}{8}} F \left ( C \right )
\left ( 1 - \mbox{erf} \left ( \sqrt{\ln \left ( F \left ( C \right ) \right )}
\right ) \right )
\right ],
\end{equation}
where
$$
F \left ( C \right ) = 2 \sqrt{\frac{14 M_{\mathcal{G}}}{\gamma}} 
C^{\frac{1}{4}}
$$
and $\mbox{erf}$ stands for the {\em error function}, i.e.,
$ \mbox{erf} \left ( t \right ) = \frac{2}{\sqrt{\pi}}
\int_0^t e^{-u^2} \; du$.

\noindent If $d_{\mathcal{G}} = 2$, then
$$
R_m \left ( \mathcal{F}_{\mathcal{G}, \gamma} \right ) 
\leqslant \frac{\gamma C^{\frac{3}{4}}}{\sqrt{m}}
+ 1152 \sqrt{\frac{5 K_{\mathcal{G}}}{m}} C
\left \lceil \frac{1}{2} \log_2 \left ( \frac{m}{C} \right ) \right \rceil
\sqrt{\ln \left ( \frac{14 M_{\mathcal{G}} \sqrt{m}}
{\gamma C^{\frac{1}{4}}} \right )}.
$$
At last, if $d_{\mathcal{G}} > 2$, then
\begin{equation}
\label{eq:bound-on-Rademacher-complexity-3+}
R_m \left ( \mathcal{F}_{\mathcal{G}, \gamma} \right ) 
\leqslant \sqrt{C} \left (
\gamma \left ( \frac{C}{m} \right )^{\frac{1}{d_{\mathcal{G}}}}
+ 8 \cdot 96^{\frac{d_{\mathcal{G}}}{2}}
\left ( 2^{\frac{2}{d_{\mathcal{G}} - 2}} + 1 \right )
\cdot \gamma^{1 - \frac{d_{\mathcal{G}}}{2}}
\sqrt{5 K_{\mathcal{G}}} 
\left ( \frac{C}{m} \right )^{\frac{1}{d_{\mathcal{G}}}}
\sqrt{
\ln \left ( \frac{14 M_{\mathcal{G}}}{\gamma}
\left ( \frac{m}{C} \right )^{\frac{1}{d_{\mathcal{G}}}}
\right )} \right ).
\end{equation}
\end{theorem}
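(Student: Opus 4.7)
The plan is to specialize the chained bound~(\ref{eq:chaining-sum-final}) under Hypothesis~\ref{hypothesis:restriction-gamma-dim}. Substituting $d(h(j)/(96\sqrt{C})) \leqslant K_{\mathcal{G}}(96\sqrt{C})^{d_{\mathcal{G}}}h(j)^{-d_{\mathcal{G}}}$ with a dyadic schedule $h(j)=2^{-j}h(0)$ (so that $h(j)+h(j-1)=3h(j)$), each summand of~(\ref{eq:chaining-sum-final}) reduces to a multiple of $C^{d_{\mathcal{G}}/4}\,h(j)^{1-d_{\mathcal{G}}/2}\sqrt{\ln(14M_{\mathcal{G}}\sqrt{C}/h(j))}$. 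The three regimes in the statement correspond to the sign of $1-d_{\mathcal{G}}/2$, and each is handled by a tailored choice of $(h(0),N)$.

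For $d_{\mathcal{G}}>2$ I would take $h(0)=\sqrt{C}\gamma$ and $N=\lceil(1/d_{\mathcal{G}})\log_2(m/C)\rceil$. The summand grows geometrically in $j$, so the series is dominated by its last term, and the sum-to-last-term ratio $2^{d_{\mathcal{G}}/2-1}/(2^{d_{\mathcal{G}}/2-1}-1)$ is bounded by $2^{2/(d_{\mathcal{G}}-2)}+1$, producing the announced combinatorial prefactor. The identities $h(N)=\sqrt{C}\gamma(C/m)^{1/d_{\mathcal{G}}}$ and $14M_{\mathcal{G}}\sqrt{C}/h(N)=(14M_{\mathcal{G}}/\gamma)(m/C)^{1/d_{\mathcal{G}}}$ then match term-by-term the bracketed expression in~(\ref{eq:bound-on-Rademacher-complexity-3+}).

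For $d_{\mathcal{G}}=2$ the exponent $1-d_{\mathcal{G}}/2$ vanishes and each summand equals $3\cdot 96\sqrt{K_{\mathcal{G}}}\sqrt{C}\sqrt{\ln(\cdot)}$, so the sum contributes a factor $N$ after bounding the logarithm at $j=N$. I would take $h(0)=C^{1/4}\gamma$ (admissible since $C\geqslant 3$ and $\gamma\leqslant 1$) and $N=\lceil(1/2)\log_2(m/C)\rceil$, which yields $h(N)\leqslant \gamma C^{3/4}/\sqrt{m}$ and $14M_{\mathcal{G}}\sqrt{C}/h(N)\leqslant 14M_{\mathcal{G}}\sqrt{m}/(\gamma C^{1/4})$; multiplying by the chaining prefactor $4\sqrt{5C/m}$ collects into $1152\sqrt{5K_{\mathcal{G}}/m}\,C\,N$.

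For $d_{\mathcal{G}}=1$ the summand decays as $2^{-j/2}\sqrt{\ln(\cdot)}$ and the series converges, so I would let $N\to\infty$. Taking $h(0)=\gamma$, I would dominate the sum by the integral $\int_0^\infty 2^{-x/2}\sqrt{\ln(a\cdot 2^x)}\,dx$ with $a=14M_{\mathcal{G}}\sqrt{C}/\gamma$ (monotonicity is automatic since $a>e$), apply the Gaussian substitution $u^2=(1/2)\ln(a\cdot 2^x)$ to obtain $(4\sqrt{2a}/\ln 2)\int_{u_0}^\infty u^2 e^{-u^2}\,du$, and integrate by parts ($v=u$, $dw=ue^{-u^2}\,du$) to split this into a boundary term and a complementary-error-function tail. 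Re-expressing $a$ and $u_0$ through $F(C)=2\sqrt{14M_{\mathcal{G}}/\gamma}\,C^{1/4}$ (which satisfies $F(C)^2=4a$) produces the two bracketed terms $\sqrt{\ln(F(C))/2}$ and $\sqrt{\pi/8}\,F(C)(1-\mbox{erf}(\sqrt{\ln F(C)}))$, both pre-multiplied by a chaining factor that collects into $160\sqrt{30K_{\mathcal{G}}\gamma/m}\,C^{3/4}$. The main obstacle is precisely this case: while the discrete-to-integral passage is routine, the Gaussian substitution, integration by parts, and repackaging via $F(C)$ must be executed carefully to recover the announced numerical constants, whereas the other two cases are direct algebraic consequences of the chained bound and the polynomial fat-shattering hypothesis.
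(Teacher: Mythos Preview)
Your overall strategy coincides with the paper's: plug Hypothesis~\ref{hypothesis:restriction-gamma-dim} into the chained bound~(\ref{eq:chaining-sum-final}) to obtain an inequality of the form
\[
R_m(\mathcal{F}_{\mathcal{G},\gamma}) \leqslant h(N) + 4\cdot 96^{d_{\mathcal{G}}/2}\sqrt{\frac{5K_{\mathcal{G}}}{m}}\,C^{(d_{\mathcal{G}}+2)/4}\sum_{j=1}^N \frac{h(j)+h(j-1)}{h(j)^{d_{\mathcal{G}}/2}}\sqrt{\ln\!\left(\frac{14M_{\mathcal{G}}\sqrt{C}}{h(j)}\right)},
\]
and then treat the three regimes separately. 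However, your \emph{uniform} dyadic choice $h(j)=2^{-j}h(0)$, parametrized from $h(0)$, differs from the paper's case-tailored schedules, and the discrepancies block the exact constants announced in the theorem.

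For $d_{\mathcal{G}}=2$ there is a sign error: from $h(N)\leqslant \gamma C^{3/4}/\sqrt{m}$ you can only deduce $14M_{\mathcal{G}}\sqrt{C}/h(N)\geqslant 14M_{\mathcal{G}}\sqrt{m}/(\gamma C^{1/4})$, not $\leqslant$; the ceiling in $N$ would leave you with $28$ in place of $14$ inside the logarithm. The paper sidesteps this by parametrizing \emph{from} $h(N)$, setting $h(j)=\gamma C^{3/4}m^{-1/2}2^{-j+N}$ so that $h(N)=\gamma C^{3/4}/\sqrt{m}$ exactly. For $d_{\mathcal{G}}>2$ the paper likewise anchors at $h(N)$ and uses the step $2^{2/(d_{\mathcal{G}}-2)}$, chosen precisely so that $h(j)^{1-d_{\mathcal{G}}/2}$ has geometric ratio~$2$, whence $S_N<2(2^{2/(d_{\mathcal{G}}-2)}+1)$ and the prefactor $8\cdot 96^{d_{\mathcal{G}}/2}(2^{2/(d_{\mathcal{G}}-2)}+1)$ drops out without further estimation; your dyadic step gives ratio $2^{(d_{\mathcal{G}}-2)/2}$ and, combined with the ceiling slop in $N$, a strictly larger constant.

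The most substantive mismatch is the case $d_{\mathcal{G}}=1$. The paper takes $h(j)=\gamma\cdot 2^{-2j}$, so that the substitution $\epsilon=2^{-j}$ converts the sum into $\int_0^{1/2}\sqrt{\ln(14M_{\mathcal{G}}\sqrt{C}/(\gamma\epsilon^2))}\,d\epsilon$, whose closed form is exactly the bracket in~(\ref{eq:bound-on-Rademacher-complexity-1}) with erf argument $\sqrt{\ln F(C)}$. Your schedule $h(j)=\gamma\cdot 2^{-j}$ leads to the integral $\int_0^\infty 2^{-x/2}\sqrt{\ln(a\cdot 2^x)}\,dx$; the Gaussian substitution you describe produces the lower limit $u_0=\sqrt{(\ln a)/2}=\sqrt{\ln F(C)-\ln 2}$, not $\sqrt{\ln F(C)}$, and an extra factor $1/\ln 2$ that does not cancel. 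Hence your claim that the output ``produces the two bracketed terms'' of~(\ref{eq:bound-on-Rademacher-complexity-1}) is incorrect: you obtain a valid bound of the same shape, but not the one stated.
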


\begin{proof}
A substitution of Inequality~(\ref{eq:gamma-dim-bound}) into
Inequality~(\ref{eq:chaining-sum-final}) provides:
\begin{equation}
\label{eq:bound-on-Rademacher-complexity-partial-1}
R_m \left ( \mathcal{F}_{\mathcal{G}, \gamma} \right ) \leqslant
h \left ( N \right )
+ 4 \cdot 96^{\frac{d_{\mathcal{G}}}{2}}
\sqrt{\frac{5 K_{\mathcal{G}}}{m}} 
C^{\frac{d_{\mathcal{G}}+2 }{4}}
\sum_{j=1}^N 
\frac{h \left ( j \right ) + h \left ( j-1 \right )}
{h \left ( j \right )^{\frac{d_{\mathcal{G}}}{2}}}
\sqrt{
\ln \left ( \frac{14 M_{\mathcal{G}} \sqrt{C}}{h \left ( j \right )} \right )}.
\end{equation}
At this point, we distinguish three cases according to the value taken by
$d_{\mathcal{G}}$.

\paragraph{First case: $d_{\mathcal{G}} = 1$}
This case is the only one for which the entropy integral of
Formula~(\ref{eq:Dudley's-integral-inequality}) exists.
Setting for all $j \in \mathbb{N}$, 
$h \left ( j \right ) = \gamma \cdot 2^{-2j}$,
we obtain
\begin{equation}
\label{eq:bound-on-Rademacher-complexity-1-partial-1}
R_m \left ( \mathcal{F}_{\mathcal{G}, \gamma} \right )
\leqslant
160 \sqrt{\frac{30 K_{\mathcal{G}} \gamma}{m}} C^{\frac{3}{4}}
\int_0^\frac{1}{2} \sqrt{
\ln \left ( \frac{14 M_{\mathcal{G}} \sqrt{C}}
{\gamma \epsilon^2} \right )} \; d \epsilon.
\end{equation}
The computation of the integral gives
\begin{equation}
\label{eq:bound-on-Rademacher-complexity-1-partial-2}
\int_0^\frac{1}{2} \sqrt{
\ln \left ( \frac{14 M_{\mathcal{G}} \sqrt{C}}
{\gamma \epsilon^2} \right )} \; d \epsilon 
= \sqrt{\frac{\ln \left ( F \left ( C \right ) \right )}{2}}
+ \frac{F \left ( C \right )}{\sqrt{2}} \frac{\sqrt{\pi}}{2}
\left ( 1 - \mbox{erf} \left ( \sqrt{\ln \left ( F \left ( C \right ) \right )}
\right ) \right ).
\end{equation}
Inequality (\ref{eq:bound-on-Rademacher-complexity-1}) then results
from a substitution of the right-hand side of
(\ref{eq:bound-on-Rademacher-complexity-1-partial-2})
into (\ref{eq:bound-on-Rademacher-complexity-1-partial-1}).

\paragraph{Second case: $d_{\mathcal{G}} = 2$}

It stems from (\ref{eq:bound-on-Rademacher-complexity-partial-1}) that
$$
R_m \left ( \mathcal{F}_{\mathcal{G}, \gamma} \right ) \leqslant
h \left ( N \right )
+ 384 \sqrt{\frac{5 K_{\mathcal{G}}}{m}} C
\sum_{j=1}^N 
\frac{h \left ( j \right ) + h \left ( j-1 \right )}{h \left ( j \right )}
\sqrt{
\ln \left ( \frac{14 M_{\mathcal{G}} \sqrt{C}}{h \left ( j \right )} \right )}.
$$
For 
$N 
= \left \lceil \frac{1}{2} \log_2 \left ( \frac{m}{C} \right ) \right \rceil$,
we set
$h \left ( j \right ) = \gamma C^{\frac{3}{4}} m^{-\frac{1}{2}} 2^{-j+N}$.
Note that these choices are feasible since $N \in \mathbb{N}^*$
due to $m > C$,
$h \left ( 0 \right ) \geqslant \gamma C^{\frac{1}{4}} > \gamma$,
and 
$h \left ( 0 \right ) < 2 \gamma C^{\frac{1}{4}} < 2 M_{\mathcal{G}} \sqrt{C}$.
Then,
\begin{align*}
R_m \left ( \mathcal{F}_{\mathcal{G}, \gamma} \right ) 
& \leqslant \;
\frac{\gamma C^{\frac{3}{4}}}{\sqrt{m}}
+ 1152 \sqrt{\frac{5 K_{\mathcal{G}}}{m}} C
\sum_{j=1}^N 
\sqrt{
\ln \left ( \frac{14 M_{\mathcal{G}} \sqrt{m} \cdot 2^{j-N}}
{\gamma C^{\frac{1}{4}}} \right )} \\
& \leqslant \;
\frac{\gamma C^{\frac{3}{4}}}{\sqrt{m}}
+ 1152 \sqrt{\frac{5 K_{\mathcal{G}}}{m}} C
\left \lceil \frac{1}{2} \log_2 \left ( \frac{m}{C} \right ) \right \rceil
\sqrt{ \ln \left ( \frac{14 M_{\mathcal{G}} \sqrt{m}}
{\gamma C^{\frac{1}{4}}} \right )}.
\end{align*}

\paragraph{Third case: $d_{\mathcal{G}} > 2$}
For $N = \left \lceil \frac{d_{\mathcal{G}} - 2}{2 d_{\mathcal{G}}} 
\log_2 \left (
\frac{m}{C} \right ) \right \rceil$, let us set
$h \left ( j \right ) = \gamma 
C^{\frac{1}{2} + \frac{1}{d_{\mathcal{G}}}} 
m^{-\frac{1}{d_{\mathcal{G}}}} 
2^{\frac{2}{d_{\mathcal{G}} - 2} \left ( -j+N \right )}$.
Obviously, the constraints on $N$ and the function $h$
are once more satisfied. By substitution into
(\ref{eq:bound-on-Rademacher-complexity-partial-1}), we get:
\begin{equation}
\label{eq:partial-bound-on-Rademacher-complexity-3+}
R_m \left ( \mathcal{F}_{\mathcal{G}, \gamma} \right ) 
\leqslant
\sqrt{C} \left (
\gamma \left ( \frac{C}{m} \right )^{\frac{1}{d_{\mathcal{G}}}}
+ 4 \cdot 96^{\frac{d_{\mathcal{G}}}{2}}
\cdot \gamma^{1 - \frac{d_{\mathcal{G}}}{2}}
\sqrt{5 K_{\mathcal{G}}} 
\left ( \frac{C}{m} \right )^{\frac{1}{d_{\mathcal{G}}}}
\sqrt{
\ln \left ( \frac{14 M_{\mathcal{G}}}{\gamma}
\left ( \frac{m}{C} \right )^{\frac{1}{d_{\mathcal{G}}}} \right )}
S_N \right )
\end{equation}
with 
$$
S_N = \sum_{j=1}^N 
\frac{2^{\frac{2}{d_{\mathcal{G}} - 2} \left ( -j+N \right )}
+ 2^{\frac{2}{d_{\mathcal{G}} - 2} \left ( -j+1+N \right )}}
{2^{\frac{d_{\mathcal{G}}}{d_{\mathcal{G}} - 2} \left ( -j+N \right )}}.
$$
Now,
\begin{align*}
S_N
& = \;
\left ( 2^{\frac{2}{d_{\mathcal{G}} - 2}} + 1 \right )
\sum_{j=1}^N 2^{j-N} \\
& < \;
2 \left ( 2^{\frac{2}{d_{\mathcal{G}} - 2}} + 1 \right ).
\end{align*}
Inequality~(\ref{eq:bound-on-Rademacher-complexity-3+}) results from
a substitution of this upper bound on $S_N$ into
(\ref{eq:partial-bound-on-Rademacher-complexity-3+}).
\end{proof}

\subsection{Discussion}

The implementation of Dudley's chaining method 
under Hypothesis~\ref{hypothesis:restriction-gamma-dim} highlights 
the {\em phase transition} already identified by Mendelson in \cite{Men02}
(see also \cite{Men01}). Besides this well-known phenomenon regarding
the convergence rate, a parallel one can be noticed regarding 
the dependency on $C$. Indeed, if this dependency is always sublinear,
as announced, it varies significantly between $\sqrt{C}$ and $C$,
as a function of the value of $d_{\mathcal{G}}$.
Its asymptotic value is $\sqrt{C}$.
It is noteworthy that the behaviours observed are highly sensitive to
the choice of the function $h$. 
We have already noticed in the beginning of the section that
setting $h \left ( j \right ) = 2^{-j} \sqrt{C} \gamma$
has for consequence that the dependency on $C$ is uniformly linear.
Another example is instructive.
In the case $d_{\mathcal{G}} = 1$, choosing
$h \left ( j \right ) = \gamma \cdot 2^{-j}$ leads to
$$
R_m \left ( \mathcal{F}_{\mathcal{G}, \gamma} \right ) 
\leqslant 
96 \sqrt{\frac{30 K_{\mathcal{G}}}{m}} C \int_0^{\frac{\gamma}{2 \sqrt{C}}}
\sqrt{ \frac{1}{\epsilon}
\ln \left ( \frac{14 M_{\mathcal{G}}}{\epsilon} \right )} \; 
d \epsilon.
$$

\section{Conclusions and ongoing research}
\label{sec:conclusions}

An $L_p$-norm Sauer-Shelah lemma dedicated to margin multi-category classifiers
whose classes of component functions are uniform Glivenko-Cantelli classes
has been established. 
Its use makes it possible to improve the dependency on the number $C$
of categories of the state-of-the-art guaranteed risks based on
the $L_{\infty}$-norm and the $L_2$-norm.
In both cases, this dependency becomes sublinear.
Furthermore, in the favourable cases, the confidence interval can grow
with $C$ as slowly as a $O \left ( \sqrt{C} \right )$.

Our current work consists in continuing the unification of the approaches used
to derive the bounds with respect to the different $L_p$-norms.
The aim is to make the comparison of the resulting guaranteed risks
more straightforward, as a step towards the characterization of
the intrinsic complexity of the computation of polytomies.
We also look for improvements resulting from the use of new tools
from the theory of empirical processes.
In that respect, the recent developments of the implementation
of the chaining method appear promising.

Our results have been established under minimal assumptions regarding
the pattern classification problem, the classifier and the margin loss function.
Our future work will consist in assessing the benefit
that one can derive from this study under different assumptions,
such as those made in \cite{LeiDogBinKlo15}.

\acks{The author would like to thank R.~Vershynin for his explanations
on the proof of Theorem~1 in \cite{MenVer03} and A.~Kontorovich
for bringing to his attention the bounds in \cite{MohRosTal12}.
Thanks are also due to F.~Lauer and K.~Musayeva
for carefully reading this manuscript.
This work was partly funded by a CNRS research grant.}

\appendix

\section{Basic results and technical lemmas}

Our formulation of Dudley's metric entropy bound,
tailored for our needs, generalizes
that established in transcripts of Bartlett's lectures which can
be found online (see also \cite{AudBou07}). The integral inequality
appears as an instance of Corollary~13.2 in \cite{BouLugMas13}.

\begin{theorem}[Dudley's metric entropy bound]
\label{theorem:Dudley's-metric-entropy-bound}
Let $\mathcal{F}$ be a class of bounded
real-valued functions on $\mathcal{T}$.
For $n \in \mathbb{N}^*$, let
$\mathbf{t}_n = 
\left( t_i \right)_{1 \leqslant i \leqslant n} \in \mathcal{T}^n$
and let $\text{diam} \left ( \mathcal{F} \right ) =
\sup_{\left ( f, f' \right ) \in \mathcal{F}^2} 
\left \| f - f' \right \|_{L_2 \left ( \mu_{\mathbf{t}_n} \right )}$
be the diameter of $\mathcal{F}$ in the
$L_2 \left ( \mu_{\mathbf{t}_n} \right )$ seminorm.
Let $h$ be a positive and decreasing function on $\mathbb{N}$ such that
$h \left ( 0 \right ) \geqslant \text{diam} \left ( \mathcal{F} \right )$.
Then for $N \in \mathbb{N}^*$,
\begin{equation}
\label{eq:generalized-Dudley-metric-entropy-bound}
\hat{R}_n \left ( \mathcal{F} \right ) \leqslant h \left ( N \right )
+ 2 \sum_{j=1}^N \left ( h \left ( j \right ) + h \left ( j-1 \right ) \right )
\sqrt{ \frac{\ln \left (  
\mathcal{N}^{(p)} \left ( h \left ( j \right ), \mathcal{F}, 
d_{2, \mathbf{t}_n} \right )
\right )}{n}}
\end{equation}
and
\begin{equation}
\label{eq:Dudley's-integral-inequality}
\hat{R}_n \left ( \mathcal{F} \right ) \leqslant
12 \int_0^{\frac{1}{2} \cdot \text{diam} \left ( \mathcal{F} \right )}
\sqrt{\frac{ \ln \left ( 
\mathcal{N}^{(p)} \left ( \epsilon, \mathcal{F},
d_{2, \mathbf{t}_n} \right ) \right )}{n}} \; d \epsilon.
\end{equation}
\end{theorem}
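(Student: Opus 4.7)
The plan is to prove the discrete inequality~(\ref{eq:generalized-Dudley-metric-entropy-bound}) by implementing Dudley's chaining argument, and then to derive the integral inequality~(\ref{eq:Dudley's-integral-inequality}) from it by a geometric choice of scales.

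For every $j \in \ieg 0, N \ied$, I would select a proper $h \left ( j \right )$-net $\mathcal{F}_j$ of $\mathcal{F}$ in the pseudo-metric $d_{2, \mathbf{t}_n}$, of cardinality $\mathcal{N}^{(p)} \left ( h \left ( j \right ), \mathcal{F}, d_{2, \mathbf{t}_n} \right )$. The hypothesis $h \left ( 0 \right ) \geqslant \text{diam} \left ( \mathcal{F} \right )$ ensures that $\mathcal{F}_0$ can be taken as an arbitrary singleton $\left \{ f_0 \right \} \subset \mathcal{F}$. To each $f \in \mathcal{F}$ I associate $\pi_j \left ( f \right ) \in \mathcal{F}_j$ such that $\left \| f - \pi_j \left ( f \right ) \right \|_{L_2 \left ( \mu_{\mathbf{t}_n} \right )} < h \left ( j \right )$. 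The telescoping identity
$$
f - f_0 = \left ( f - \pi_N \left ( f \right ) \right ) + \sum_{j=1}^N \left ( \pi_j \left ( f \right ) - \pi_{j-1} \left ( f \right ) \right )
$$
then produces, after applying $\frac{1}{n} \sum_i \sigma_i \cdot \left ( t_i \right )$, taking the supremum over $f$, and the expectation over $\boldsymbol{\sigma}_n$, three contributions: a term involving only $f_0$, whose expectation vanishes by symmetry of the Rademacher variables; a chaining sum; and a residual term depending on $f - \pi_N \left ( f \right )$.

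The residual term is controlled deterministically via Cauchy-Schwarz: $\left | \frac{1}{n} \sum_i \sigma_i \left ( f - \pi_N \left ( f \right ) \right ) \left ( t_i \right ) \right | \leqslant \left \| f - \pi_N \left ( f \right ) \right \|_{L_2 \left ( \mu_{\mathbf{t}_n} \right )} < h \left ( N \right )$ uniformly in $f$ and $\boldsymbol{\sigma}_n$. Level $j$ of the chaining sum is a maximum over at most $\mathcal{N}^{(p)} \left ( h \left ( j \right ), \mathcal{F}, d_{2, \mathbf{t}_n} \right )^2$ pairs $\left ( \pi_j \left ( f \right ), \pi_{j-1} \left ( f \right ) \right )$, since $h \left ( j-1 \right ) \geqslant h \left ( j \right )$ implies that the $h \left ( j-1 \right )$-cover has cardinality no larger than the $h \left ( j \right )$-cover. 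By the triangle inequality, the $L_2$-norm of any such difference is at most $h \left ( j \right ) + h \left ( j-1 \right )$. Massart's sub-Gaussian maximal inequality for finite families of Rademacher sums then yields the contribution $2 \left ( h \left ( j \right ) + h \left ( j-1 \right ) \right ) \sqrt{\ln \left ( \mathcal{N}^{(p)} \left ( h \left ( j \right ), \mathcal{F}, d_{2, \mathbf{t}_n} \right ) \right ) / n}$ at level $j$. Summing over $j$ and adding the $h \left ( N \right )$ residual delivers~(\ref{eq:generalized-Dudley-metric-entropy-bound}).

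For the integral version, I would specialize to $h \left ( j \right ) = 2^{-j} \text{diam} \left ( \mathcal{F} \right )$, so that $h \left ( j \right ) + h \left ( j-1 \right ) = 3 h \left ( j \right )$. Using the monotonicity of $\mathcal{N}^{(p)} \left ( \cdot , \mathcal{F}, d_{2, \mathbf{t}_n} \right )$ in its first argument, each summand $h \left ( j \right ) \sqrt{\ln \left ( \mathcal{N}^{(p)} \left ( h \left ( j \right ), \mathcal{F}, d_{2, \mathbf{t}_n} \right ) \right ) / n}$ is bounded above by $2 \int_{h \left ( j+1 \right )}^{h \left ( j \right )} \sqrt{\ln \left ( \mathcal{N}^{(p)} \left ( \epsilon, \mathcal{F}, d_{2, \mathbf{t}_n} \right ) \right ) / n} \; d \epsilon$. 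Letting $N \longrightarrow +\infty$ so that $h \left ( N \right ) \longrightarrow 0$, and summing from $j=1$ onward, gives the $12 \int_0^{\text{diam} \left ( \mathcal{F} \right ) / 2} \cdots \; d \epsilon$ form of~(\ref{eq:Dudley's-integral-inequality}). The main point of vigilance is the tracking of the prefactor through both steps: the $2$ in the chaining sum comes jointly from $\sqrt{2 \ln}$ in Massart's lemma and from majorizing both cover sizes by that at scale $h \left ( j \right )$; the $12 = 6 \cdot 2$ in the integral form arises from the $3 h \left ( j \right ) = h \left ( j \right ) + h \left ( j-1 \right )$ substitution combined with the factor $2$ lost when passing from the sum to the integral.
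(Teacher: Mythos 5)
Your proposal is correct and follows essentially the same route as the paper's proof: the same telescoping chain through proper $h(j)$-nets with a singleton at level $0$, the residual term controlled by Cauchy--Schwarz, each link bounded via the triangle inequality and Massart's finite class lemma applied to at most $\mathcal{N}^{(p)} \left ( h \left ( j \right ), \mathcal{F}, d_{2, \mathbf{t}_n} \right )^2$ pairs, and the integral form obtained from the geometric choice $h \left ( j \right ) = 2^{-j} \cdot \text{diam} \left ( \mathcal{F} \right )$ with the identical bookkeeping of the factors $3$ and $2$ yielding the constant $12$. No gaps.
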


\begin{proof}
For $j \in \mathbb{N}^*$,
let $\bar{\mathcal{F}}_j$ be a proper $h \left ( j \right )$-net 
of $\mathcal{F}$ with respect to $d_{2, \mathbf{t}_n}$ such that
$\left | \bar{\mathcal{F}}_j \right | = 
\mathcal{N}^{(p)} \left ( h \left ( j \right ), \mathcal{F}, 
d_{2, \mathbf{t}_n} \right )$.
We set $\bar{\mathcal{F}}_0 = \left \{ \bar{f}_0 \right \}$
where $\bar{f}_0$ is any function in $\mathcal{F}$.
Note that since $h \left ( 0 \right )$ can be equal to
$\text{diam} \left ( \mathcal{F} \right )$, the construction of
$\bar{\mathcal{F}}_0$ does not ensure that this set is
a proper $h \left ( 0 \right )$-net
of $\mathcal{F}$ with respect to $d_{2, \mathbf{t}_n}$
(the minimum cardinality of such a net can be superior or equal to $2$).
The Rademacher process underlying the Rademacher complexity is centered, i.e.,
$$
\forall f \in \mathcal{F}, \;\; \mathbb{E}_{\boldsymbol{\sigma}_n} \left [ 
\frac{1}{\sqrt{n}} \sum_{i=1}^n \sigma_i f \left ( t_i \right )
\right ] = 0.
$$
Thus,
$$
\hat{R}_n \left ( \mathcal{F} \right ) = 
\mathbb{E}_{\boldsymbol{\sigma}_n}
\left [ \sup_{f \in \mathcal{F}} \frac{1}{n}
\sum_{i=1}^n \sigma_i \left ( f \left ( t_i \right ) 
- \bar{f}_0 \left ( t_i \right ) \right ) \right ].
$$
For each $f \in \mathcal{F}$ and each $j \in \mathbb{N}^*$,
choose $\bar{f}_j \in \bar{\mathcal{F}}_j$ such that
$\left \| f - \bar{f}_j \right \|_{L_2 \left ( \mu_{\mathbf{t}_n} \right )} 
< h \left ( j \right )$. Notice that
$$
f - \bar{f}_0 = 
f - \bar{f}_N + \sum_{j=1}^N \left ( \bar{f}_j - \bar{f}_{j-1} \right ).
$$
As a consequence, making use of the sub-additivity of the supremum function
provides us with:

\begin{equation}
\label{eq:generalized-Dudley-partial-1}
\hat{R}_n \left ( \mathcal{F} \right ) \leqslant
\mathbb{E}_{\boldsymbol{\sigma}_n}
\left [ \sup_{f \in \mathcal{F}} \frac{1}{n}
\sum_{i=1}^n \sigma_i \left ( f \left ( t_i \right ) 
- \bar{f}_N \left ( t_i \right ) \right ) \right ]
+ \sum_{j=1}^N \mathbb{E}_{\boldsymbol{\sigma}_n}
\left [ \sup_{f \in \mathcal{F}} \frac{1}{n}
\sum_{i=1}^n \sigma_i \left ( \bar{f}_j \left ( t_i \right ) 
- \bar{f}_{j-1} \left ( t_i \right ) \right ) \right ].
\end{equation}
To bound from above the first term of the right-hand side of
(\ref{eq:generalized-Dudley-partial-1}),
we can make use in sequence of the Cauchy-Schwarz inequality and 
the definition of $h$.

\begin{align}
\mathbb{E}_{\boldsymbol{\sigma}_n}
\left [ \sup_{f \in \mathcal{F}} \frac{1}{n}
\sum_{i=1}^n \sigma_i \left ( f \left ( t_i \right ) 
- \bar{f}_N \left ( t_i \right ) \right ) \right ]
& \leqslant \; \mathbb{E}_{\boldsymbol{\sigma}_n}
\left [ \sup_{f \in \mathcal{F}} \left \{
\left ( \frac{1}{n} \sum_{i=1}^n \sigma_i^2 \right )^{\frac{1}{2}}
\left ( \frac{1}{n} \sum_{i=1}^n \left ( f \left ( t_i \right )
- \bar{f}_N \left ( t_i \right ) \right )^2 \right )^{\frac{1}{2}}
\right \} \right ] \nonumber \\
& \leqslant \; \sup_{f \in \mathcal{F}}
\left \| f - \bar{f}_N \right \|_{L_2 \left ( \mu_{\mathbf{t}_n} \right )}
\mathbb{E}_{\boldsymbol{\sigma}_n}
\left [ \left ( \frac{1}{n} \sum_{i=1}^n \sigma_i^2 \right )^{\frac{1}{2}}
\right ] \nonumber \\
\label{eq:generalized-Dudley-partial-2}
& < \; h \left ( N \right ).
\end{align}
As for the second term of the right-hand side of 
(\ref{eq:generalized-Dudley-partial-1}),
we make use of Massart's finite class lemma
(Lemma~5.2 in \cite{Mas00}). This calls for
the derivation of an upper bound on
$ \left \| \frac{1}{n} \left ( \bar{f}_j \left ( t_i \right ) - 
\bar{f}_{j-1} \left ( t_i \right ) \right )_{1 \leqslant i \leqslant n} 
\right \|_2 =
\frac{1}{\sqrt{n}} \left \| \bar{f}_j - \bar{f}_{j-1} 
\right \|_{L_2 \left ( \mu_{\mathbf{t}_n} \right )}$
for all $j \in \ieg 1, N \ied$. This upper bound is obtained by
application of Minkowski's inequality:

\begin{align}
\left \| \bar{f}_j - \bar{f}_{j-1} 
\right \|_{L_2 \left ( \mu_{\mathbf{t}_n} \right )}
& = \; \left \| \bar{f}_j - f + f - \bar{f}_{j-1} 
\right \|_{L_2 \left ( \mu_{\mathbf{t}_n} \right )} \nonumber \\
& \leqslant \; \left \| \bar{f}_j - f
\right \|_{L_2 \left ( \mu_{\mathbf{t}_n} \right )} + \left \| f - \bar{f}_{j-1}
\right \|_{L_2 \left ( \mu_{\mathbf{t}_n} \right )}
\nonumber \\
\label{eq:generalized-Dudley-partial-3}
& < h \left ( j \right ) + h \left ( j-1 \right ).
\end{align}
We can check that (\ref{eq:generalized-Dudley-partial-3})
still holds for $j=1$ since
$$
\begin{cases}
\left \| \bar{f}_1 - f 
\right \|_{L_2 \left ( \mu_{\mathbf{t}_n} \right )} < h \left ( 1 \right ) \\
\left \| f - \bar{f}_0
\right \|_{L_2 \left ( \mu_{\mathbf{t}_n} \right )} 
\leqslant \text{diam} \left ( \mathcal{F} \right )
\leqslant h \left ( 0 \right )
\end{cases}
\Longrightarrow
\left \| \bar{f}_1 - f \right \|_{L_2 \left ( \mu_{\mathbf{t}_n} \right )}
+ \left \| f - \bar{f}_0 \right \|_{L_2 \left ( \mu_{\mathbf{t}_n} \right )}
< h \left ( 1 \right ) + h \left ( 0 \right ).
$$
Applying Lemma~5.2 in \cite{Mas00} with
(\ref{eq:generalized-Dudley-partial-3}) gives:
\begin{align}
\forall j \in \ieg 1, N \ied, \;\;
\mathbb{E}_{\boldsymbol{\sigma}_n}
\left [ \sup_{f \in \mathcal{F}} \frac{1}{n}
\sum_{i=1}^n \sigma_i \left ( \bar{f}_j \left ( t_i \right ) 
- \bar{f}_{j-1} \left ( t_i \right ) \right ) \right ]
& \leqslant \; \frac{h \left ( j \right ) + h \left ( j-1 \right )}{\sqrt{n}}
\sqrt{ 2 \ln \left ( \left | \bar{\mathcal{F}}_j \right |
\left | \bar{\mathcal{F}}_{j-1} \right | \right )} \nonumber \\
\label{eq:generalized-Dudley-partial-4}
& \leqslant \; 2 \left ( h \left ( j \right ) + h \left ( j-1 \right ) \right )
\sqrt{ \frac{\ln \left ( \left | \bar{\mathcal{F}}_j \right | \right )}{n}}.
\end{align}
The substitution of (\ref{eq:generalized-Dudley-partial-2}) and
(\ref{eq:generalized-Dudley-partial-4}) into
(\ref{eq:generalized-Dudley-partial-1}) produces
(\ref{eq:generalized-Dudley-metric-entropy-bound}). Furthermore,
setting for all $j \in \mathbb{N}$, $h \left ( j \right ) = 2^{-j} \cdot
\text{diam} \left ( \mathcal{F} \right )$, gives:
\begin{align}
\hat{R}_n \left ( \mathcal{F} \right ) & \leqslant \;
\text{diam} \left ( \mathcal{F} \right ) \left (
2^{-N} + 6 \sum_{j=1}^N 2^{-j} \sqrt{\frac{ \ln \left ( 
\mathcal{N}^{(p)} \left ( 2^{-j} \cdot \text{diam} \left ( \mathcal{F} \right ),
\mathcal{F}, d_{2, \mathbf{t}_n} \right ) \right )}{n}} \right ) \nonumber \\
\label{eq:Dudley-partial-a2}
& \leqslant \;
\text{diam} \left ( \mathcal{F} \right ) \left (
2^{-N} + 12 \sum_{j=1}^N \left ( 2^{-j} - 2^{- \left (j + 1 \right )} \right )
\sqrt{\frac{ \ln \left ( 
\mathcal{N}^{(p)} \left ( 2^{-j} \cdot \text{diam} \left ( \mathcal{F} \right ),
\mathcal{F}, d_{2, \mathbf{t}_n} \right ) \right )}{n}} \right ) \\
\label{eq:Dudley-partial-a3}
& \leqslant \; 
2^{-N} \cdot \text{diam} \left ( \mathcal{F} \right )
+ 12 \int_{\frac{1}{2^{N+1}} \cdot \text{diam} \left ( \mathcal{F} \right )
}^{\frac{1}{2} \cdot \text{diam} \left ( \mathcal{F} \right )}
\sqrt{\frac{ \ln \left ( 
\mathcal{N}^{(p)} \left ( \epsilon, \mathcal{F},
d_{2, \mathbf{t}_n} \right ) \right )}{n}} \; d \epsilon.
\end{align}
Inequality~(\ref{eq:Dudley-partial-a3}) springs from
Inequality~(\ref{eq:Dudley-partial-a2}) since a covering number is
a nonincreasing function of $\epsilon$
(on the interval 
$\left [ 2^{- \left (j + 1 \right )} 
\cdot \text{diam} \left ( \mathcal{F} \right ), 
2^{-j} \cdot \text{diam} \left ( \mathcal{F} \right ) \right ]$,
$\mathcal{N}^{(p)} \left ( 
2^{-j} \cdot \text{diam} \left ( \mathcal{F} \right ),
\mathcal{F}, d_{2, \mathbf{t}_n} \right ) \leqslant
\mathcal{N}^{(p)} \left ( \epsilon, \mathcal{F}, d_{2, \mathbf{t}_n} \right )$).
Inequality~(\ref{eq:Dudley's-integral-inequality}) is simply the asymptotic
formulation of Inequality~(\ref{eq:Dudley-partial-a3}) 
(for $N$ going to infinity).
\end{proof}

\begin{lemma}[After Theorem~IV in \cite{KolTih61}]
\label{lemma:Kolmogorov}
Let $\left (E, \rho \right )$ be a pseudo-metric space.
For every totally bounded set $E' \subset E$ and
$\epsilon \in \mathbb{R}_+^*$,
$$
\mathcal{N}^{(p)} \left ( \epsilon, E', \rho \right ) \leqslant
\mathcal{M} \left ( \epsilon, E', \rho \right ).
$$
\end{lemma}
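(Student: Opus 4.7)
The plan is to realize a proper $\epsilon$-net of $E'$ as a maximal $\epsilon$-separated subset of $E'$, thereby simultaneously bounding $\mathcal{N}^{(p)}$ from above by $\mathcal{M}$. First I would invoke total boundedness: it guarantees that $\mathcal{M}(\epsilon, E', \rho)$ is finite, so there exists an $\epsilon$-separated subset $S \subset E'$ attaining this maximum, i.e., $|S| = \mathcal{M}(\epsilon, E', \rho)$.

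Next I would show that $S$ is a proper $\epsilon$-net of $E'$ in the sense of Definition~\ref{def:covering-numbers}. Let $e \in E'$. If $e \in S$, then $\rho(e, S) = 0 < \epsilon$ trivially. If $e \notin S$, then $S \cup \{e\}$ has cardinality strictly greater than $\mathcal{M}(\epsilon, E', \rho)$, so by maximality it cannot be $\epsilon$-separated. Since $S$ itself is $\epsilon$-separated, the pair witnessing the failure must involve $e$; hence there exists $s \in S$ with $\rho(e, s) < \epsilon$, showing $\rho(e, S) < \epsilon$. Combining the two cases, $S$ is a proper $\epsilon$-net of $E'$, and since $S \subset E' \subset E$ the definition of $\mathcal{N}^{(p)}$ yields $\mathcal{N}^{(p)}(\epsilon, E', \rho) \leqslant |S| = \mathcal{M}(\epsilon, E', \rho)$.

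There is no genuine obstacle here; the only subtle point is the compatibility between the strict inequality $\rho(e, s) < \epsilon$ in the covering definition and the non-strict inequality $\rho(e, e') \geqslant \epsilon$ in the separation definition. These are exactly complementary, so the failure of $S \cup \{e\}$ to be $\epsilon$-separated forces strict inequality $\rho(e, s) < \epsilon$ for some $s \in S$, which is precisely what the covering notion requires. Total boundedness is needed only to ensure the maximum is attained, so that the extension-by-$e$ argument genuinely produces a contradiction.
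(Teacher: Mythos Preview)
Your argument is correct and is exactly the standard one: a maximal $\epsilon$-separated subset of $E'$ is automatically a proper $\epsilon$-net, and total boundedness guarantees the maximum is attained so that the maximality argument goes through. The paper does not supply its own proof of this lemma; it simply states it as a classical fact attributed to Kolmogorov and Tikhomirov, so there is nothing to compare against beyond noting that your proof is the canonical one.
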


\begin{lemma}[After Lemma~13 in \cite{MenVer03}]
\label{lemma:lemma-13-in-MenVer03-in-L_p}
Let $\mathcal{T} = \left \{ t_i: \; 1 \leqslant i \leqslant n \right \}$
be a finite set
and $\mathbf{t}_n = \left( t_i \right)_{1 \leqslant i \leqslant n}$.
Let $\mathcal{F}$ be a finite class of functions from $\mathcal{T}$
into $\left [ -M_{\mathcal{F}}, M_{\mathcal{F}} \right ]$
with $M_{\mathcal{F}} \in \mathbb{R}_+^*$. Let $p \in \mathbb{N}^*$.
Assume that for some $\epsilon \in \left ( 0, 2M_{\mathcal{F}} \right ]$,
$\mathcal{F}$ is $\epsilon$-separated
with respect to the pseudo-metric $d_{p, \mathbf{t}_n}$.
If $r \in \left [ 1, n \right ]$ is such that
$\left | \mathcal{F} \right | \leqslant
\exp \left ( K_e \left ( p \right ) r \epsilon^{2p} \right )$
with
$$
K_e \left ( p \right ) = \frac{3}
{112 \left ( 2 M_{\mathcal{F}} \right )^{2p}},
$$
then there exists a subvector $\mathbf{t}_q$
of $\mathbf{t}_n$ of size $q \leqslant r$ such that
$\mathcal{F}$ is 
$\left ( \left ( \frac{1}{2} \right )^{\frac{p+1}{p}}
\epsilon \right )$-separated with respect to
the pseudo-metric $d_{p, \mathbf{t}_q}$.
\end{lemma}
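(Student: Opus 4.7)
The plan is to prove this probabilistic extraction lemma by a random sampling argument combined with a union bound, generalizing Mendelson and Vershynin's Lemma~13 (which treats $p=2$) to arbitrary $p \in \mathbb{N}^*$. The target $\frac{\epsilon}{2^{(p+1)/p}}$-separation amounts to requiring that $\frac{1}{q}\sum_{j=1}^q \left| f(t_{i_j}) - f'(t_{i_j}) \right|^p \geqslant \epsilon^p / 2^{p+1}$ for every distinct pair $(f,f') \in \mathcal{F}^2$, whereas the hypothesis provides the stronger bound $\epsilon^p$ for the average over the full $\mathbf{t}_n$; the factor $2^{p+1}$ gap is precisely the slack we shall exploit via concentration.

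First I would draw a random vector $\mathbf{I} = \left( I_j \right)_{1 \leqslant j \leqslant r}$ of indices in $\ieg 1, n \ied$, independently and uniformly, and consider the induced random subvector $\mathbf{t}_{\mathbf{I}}$. For each ordered pair of distinct functions $(f,f')$, the i.i.d. random variables $X_j(f,f') = \left| f(t_{I_j}) - f'(t_{I_j}) \right|^p$ take values in $\left[0,(2M_{\mathcal{F}})^p\right]$ and their common expectation equals $d_{p,\mathbf{t}_n}(f,f')^p$, which is $\geqslant \epsilon^p$ by the $\epsilon$-separation hypothesis. Hoeffding's one-sided inequality then bounds the probability that their empirical average drops below $\epsilon^p / 2^{p+1}$ by $\exp\bigl( -2r \epsilon^{2p} (1 - 2^{-(p+1)})^2 / (2M_{\mathcal{F}})^{2p} \bigr)$.

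Next I would apply the union bound over the at most $\left|\mathcal{F}\right|^2$ ordered pairs. The resulting failure probability is strictly less than one as soon as $\left|\mathcal{F}\right| \leqslant \exp\bigl( K_e(p) r \epsilon^{2p} \bigr)$ with $K_e(p) = 3 / (112 (2M_{\mathcal{F}})^{2p})$; the constant has been kept deliberately conservative so as to hold uniformly in $p \in \mathbb{N}^*$, since the Hoeffding-driven quantity $(1 - 2^{-(p+1)})^2 / (2M_{\mathcal{F}})^{2p}$ is nondecreasing in $p$ and already noticeably larger than $K_e(p)$ at $p=1$ (the worst case). A realization of $\mathbf{I}$ for which every pair meets the threshold must then exist, and the associated deterministic subvector is the $\mathbf{t}_q$ announced in the statement, with $q \leqslant r$.

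The chief obstacle I anticipate is the passage from a with-replacement sample to an honest subvector of size $q \leqslant r$: collapsing repeated indices alters the denominator of the empirical $L_p$ average and could in principle degrade the separation. This can be handled either by adopting the convention that a subvector inherits repeats of $\mathbf{t}_n$ (so $q = r$ and $d_{p,\mathbf{t}_{\mathbf{I}}}$ is untouched), or by switching to a Bernoulli-style subsampling where indices are retained independently with probability $r/n$, giving a random subvector of expected size $r$ whose concentration is controlled by a Bernstein-type tail. Either variant preserves the argument up to minor adjustments that remain absorbed in the conservative choice of $K_e(p)$; past this bookkeeping, the proof is a routine deployment of the probabilistic method.
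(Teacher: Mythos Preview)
Your proposal is essentially correct, and in fact your fallback suggestion---Bernoulli subsampling controlled by a Bernstein-type tail---is precisely the route the paper takes. The paper never uses with-replacement sampling or Hoeffding: it retains each index $i$ independently with probability $\mu = r/(2n)$, applies Bernstein's inequality to the centered variables $(\mu-\epsilon_i)\left|\delta_f(t_i)\right|^p$ (whose variance is $O(\mu)$, which is exactly why Hoeffding is too blunt here and would produce an exponent of order $r^2/n$ rather than $r$), and then bounds the size of the selected index set via Markov, $\mathbb{P}\left(\left|S_1\right|>r\right)\leqslant \tfrac{1}{2}$. Note that the factor $\tfrac{1}{2}$ in $\mu$ is what makes this last step usable; your suggested rate $r/n$ would only yield $\mathbb{P}\left(\left|S_1\right|>r\right)\leqslant 1$, so a small adjustment is needed there.

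Your primary route via i.i.d.\ uniform sampling with replacement and Hoeffding is arithmetically cleaner and does clear the constant $K_e(p)$ comfortably, but the repeats issue you flag is real: a with-replacement sample is not a subvector of $\mathbf{t}_n$ in the sense of the statement, and collapsing duplicates alters the empirical $L_p$ average in a direction you do not control. Hence your first fix (reinterpreting ``subvector'') does not deliver the lemma as written, while your second fix is the paper's argument. In short, you have correctly located both the method and its obstruction; the paper simply commits to the Bernoulli--Bernstein variant from the start rather than treating it as a contingency.
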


\begin{proof}
Let us set $\mathcal{F} = \left \{ f_j: \; 1 \leqslant j \leqslant 
\left | \mathcal{F} \right | \right \}$
and $\mathcal{D}_{\mathcal{F}} = \left \{ 
f_j - f_{j'}: \; 1 \leqslant j < j'  \leqslant \left | \mathcal{F} \right |
\right \}$. The set $\mathcal{D}_{\mathcal{F}}$
has cardinality $\left | \mathcal{D}_{\mathcal{F}} \right | <
\frac{1}{2} \left | \mathcal{F} \right |^2$.
Fix $r \in \left [1, n \right ]$
satisfying the assumptions of the lemma and let
$\left ( \epsilon_i \right )_{1 \leqslant i \leqslant n}$ be a sequence
of $n$ independent Bernoulli random variables with common expectation
$\mu = \frac{r}{2n}$.
Then, by application of the $\epsilon$-separation property,
for every $\delta_f$ in $\mathcal{D}_{\mathcal{F}}$,
\begin{align}
\mathbb{P}
\left ( \frac{1}{n} \sum_{i=1}^n \epsilon_i 
\left | \delta_f \left ( t_i \right ) \right |^p
< \mu \left ( \frac{\epsilon}{2} \right )^p \right )
& \leqslant \;
\mathbb{P} \left ( \frac{1}{n}
\sum_{i=1}^n \left ( \mu - \epsilon_i \right )
\left | \delta_f \left ( t_i \right ) \right |^p
> \left ( 1 - \frac{1}{2^p} \right ) \mu \epsilon^p 
\right ) \nonumber \\
\label{eq:partial1-lemma-13-in-MenVer03-in-L_p}
& \leqslant \; 
\mathbb{P} \left ( \frac{1}{n}
\sum_{i=1}^n \left ( \mu - \epsilon_i \right )
\left | \delta_f \left ( t_i \right ) \right |^p
> \frac{1}{2} \mu \epsilon^p 
\right ).
\end{align}
Since by construction, for all $i \in \ieg 1, n \ied$,
$\mathbb{E} \left [
\left ( \mu - \epsilon_i \right ) 
\left | \delta_f \left ( t_i \right ) \right |^p \right ] = 0$
and $\left | \mu - \epsilon_i \right |
\left | \delta_f \left ( t_i \right ) \right |^p
\leqslant \left ( 2 M_{\mathcal{F}} \right )^p \left ( 1 - \mu \right )
< \left ( 2 M_{\mathcal{F}} \right )^p$ 
with probability one,
the right-hand side of (\ref{eq:partial1-lemma-13-in-MenVer03-in-L_p}) can be
bounded from above thanks to Bernstein's inequality \cite{Ber46}.
Given that
$$
\frac{1}{n} \sum_{i=1}^n \mathbb{E} \left [ \left ( \mu - \epsilon_i \right )^2
\delta_f \left ( t_i \right )^{2p} \right ]
\leqslant \left ( 2 M_{\mathcal{F}} \right )^{2p} \mu \left ( 1 - \mu \right )
< \left ( 2 M_{\mathcal{F}} \right )^{2p} \mu,
$$
we obtain
\begin{align*}
\mathbb{P}
\left ( \frac{1}{n} \sum_{i=1}^n \epsilon_i 
\left | \delta_f \left ( t_i \right ) \right |^p
< \mu \left ( \frac{\epsilon}{2} \right )^p \right )
& \leqslant \;
\exp \left ( - \frac{3 \mu n \epsilon^{2p}}
{4 \left ( 6 \left ( 2 M_{\mathcal{F}} \right )^{2p} + 
\left ( 2 M_{\mathcal{F}} \right )^p \epsilon^p \right )} \right ) \\
& \leqslant \; 
\exp \left ( - \frac{3 r \epsilon^{2p}}
{56 \left ( 2 M_{\mathcal{F}} \right )^{2p}} \right ) \\
& \leqslant \; 
\exp \left ( -2 K_e \left ( p \right ) r \epsilon^{2p} \right ).
\end{align*}
Therefore, given the assumption on $r$, applying
the union bound provides us with:
\begin{align}
\mathbb{P} \left ( \exists \delta_f \in \mathcal{D}_{\mathcal{F}}: \;
\left ( \frac{1}{r} \sum_{i=1}^n \epsilon_i 
\left | \delta_f \left ( t_i \right ) \right |^p
\right )^{\frac{1}{p}} < 
\left ( \frac{1}{2} \right )^{\frac{p+1}{p}} \epsilon \right )
& = \;
\mathbb{P} \left ( \exists \delta_f \in \mathcal{D}_{\mathcal{F}}: \;
\frac{1}{n} \sum_{i=1}^n \epsilon_i
\left | \delta_f \left ( t_i \right ) \right |^p
< \mu \left ( \frac{\epsilon}{2} \right )^p
\right ) \nonumber \\
& \leqslant \;
\sum_{\delta_f \in \mathcal{D}_{\mathcal{F}}} 
\mathbb{P} \left ( \frac{1}{n}
\sum_{i=1}^n \epsilon_i
\left | \delta_f \left ( t_i \right ) \right |^p
< \mu \left ( \frac{\epsilon}{2} \right )^p \right ) \nonumber \\
& \leqslant \;
\left | \mathcal{D}_{\mathcal{F}} \right |
\cdot \exp \left ( -2 K_e \left ( p \right ) r \epsilon^{2p} \right ) 
\nonumber \\
& < \;
\frac{1}{2} \exp^2 \left ( K_e \left ( p \right ) r \epsilon^{2p} \right )
\cdot \exp \left ( -2 K_e \left ( p \right ) r \epsilon^{2p} \right ) 
\nonumber \\
\label{eq:partial3-lemma-13-in-MenVer03-in-L_p}
& < \; \frac{1}{2}.
\end{align}
Moreover, if $\mathcal{S}_1$ is the random set
$\left \{ i \in \ieg 1, n \ied: \; \epsilon_i = 1 \right \}$,
then by Markov's inequality,
\begin{equation}
\label{eq:partial4-lemma-13-in-MenVer03-in-L_p}
\mathbb{P} \left ( \left | S_1 \right | > r \right ) =
\mathbb{P} \left ( \sum_{i=1}^n \epsilon_i > r \right ) \leqslant
\frac{1}{2}.
\end{equation}
Combining (\ref{eq:partial3-lemma-13-in-MenVer03-in-L_p})
and (\ref{eq:partial4-lemma-13-in-MenVer03-in-L_p})
by means of the union bound provides us with
$$
\mathbb{P}
\left \{ \left ( \exists \delta_f \in \mathcal{D}_{\mathcal{F}}: \;
\left ( \frac{1}{r} \sum_{i=1}^n \epsilon_i 
\left | \delta_f \left ( t_i \right ) \right |^{p}
\right )^{\frac{1}{p}}
< \left ( \frac{1}{2} \right )^{\frac{p+1}{p}} \epsilon \right ) 
\vee \left ( \left | S_1 \right | > r \right ) \right \} < 1
$$
or equivalently
$$
\mathbb{P}
\left \{ \left ( \forall \delta_f \in \mathcal{D}_{\mathcal{F}}: \;
\left ( \frac{1}{r} \sum_{i=1}^n \epsilon_i
\left | \delta_f \left ( t_i \right ) \right |^{p}
\right )^{\frac{1}{p}}
\geqslant \left ( \frac{1}{2} \right )^{\frac{p+1}{p}} \epsilon \right )
\wedge \left ( \left | S_1 \right | \leqslant r \right ) \right \} > 0
$$
which implies that
$$
\mathbb{P}
\left \{ \left ( \forall \delta_f \in \mathcal{D}_{\mathcal{F}}: \;
\left \| \delta_f \right \|_{L_p \left ( 
\mu_{\left ( t_i \right )_{i \in \mathcal{S}_1}} \right )}
\geqslant \left ( \frac{1}{2} \right )^{\frac{p+1}{p}} \epsilon \right )
\wedge \left ( \left | S_1 \right | \leqslant r \right ) \right \} > 0.
$$
This translates into the fact that there exists
a subvector $\mathbf{t}_q$
of $\mathbf{t}_n$ of size $q \leqslant r$
such that the class $\mathcal{F}$ is
$\left ( \left ( \frac{1}{2} \right )^{\frac{p+1}{p}}
\epsilon \right )$-separated
with respect to the pseudo-metric $d_{p, \mathbf{t}_q}$, i.e., our claim.
\end{proof}

\begin{lemma}
\label{lemma:lemma-3.2.2-in-AloBenCesHau97-in-L_p}
Let $\mathcal{F}$ be a class of functions from $\mathcal{T}$ into
$\left [ -M_{\mathcal{F}}, M_{\mathcal{F}} \right ]$ with
$M_{\mathcal{F}} \in \mathbb{R}_+^*$.
For $n \in \mathbb{N}^*$, let
$\mathbf{t}_n = \left( t_i \right)_{1 \leqslant i \leqslant n} 
\in \mathcal{T}^n$.
For all $\epsilon \in \left ( 0, 2 M_{\mathcal{F}}\right ]$,
all $\eta \in \left ( 0, \epsilon \right )$,
and all $p \in \mathbb{N}^*$,
if a subset of $\mathcal{F}$ is $\epsilon$-separated
in the pseudo-metric $d_{p, \mathbf{t}_n}$,
then the $\eta$-discretization operator acts on it as an injective mapping
and the image obtained is a set
$\left ( \frac{\left ( \epsilon^p - \eta^p \right )^{\frac{1}{p}}}{2} 
\right )$-separated
in the same pseudo-metric. As a consequence,
$$
\forall p \in \mathbb{N}^*, \;\;
\mathcal{M} \left ( \epsilon, \mathcal{F}, d_{p, \mathbf{t}_n} \right )
\leqslant
\mathcal{M} 
\left ( \frac{\left ( \epsilon^p - \eta^p \right )^{\frac{1}{p}}}{2},
\mathcal{F}^{(\eta)}, d_{p, \mathbf{t}_n} \right ).
$$
\end{lemma}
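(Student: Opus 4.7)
The plan is to analyze, for two distinct elements $f_1, f_2$ of an arbitrary $\epsilon$-separated subset $\mathcal{F}'$ of $\mathcal{F}$, the pointwise relationship between $u_i := f_1(t_i) - f_2(t_i)$ and its discretized counterpart $v_i := f_1^{(\eta)}(t_i) - f_2^{(\eta)}(t_i)$. Writing $f_j(t_i) = f_j^{(\eta)}(t_i) - M_{\mathcal{F}} + \xi_j^{(i)}$ with $\xi_j^{(i)} \in [0, \eta)$ (the remainder of the Euclidean division hidden in the definition of the $\eta$-discretization operator) yields the two structural facts on which everything rests: $u_i - v_i = \xi_1^{(i)} - \xi_2^{(i)} \in (-\eta, \eta)$, and $v_i$ is an integer multiple of $\eta$.

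Injectivity then follows in one line: if $f_1^{(\eta)} = f_2^{(\eta)}$, every $v_i$ vanishes, so $|u_i| < \eta$ pointwise, hence $d_{p, \mathbf{t}_n}(f_1, f_2) < \eta < \epsilon$, contradicting the $\epsilon$-separation of $\mathcal{F}'$.

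The crux is the separation bound, for which I would argue coordinatewise. If $|u_i| \geq \eta$, then $v_i$ cannot be zero (else $|u_i| = |u_i - v_i| < \eta$); being a nonzero multiple of $\eta$, $|v_i| \geq \eta$, and the triangle inequality gives $|u_i| \leq |v_i| + \eta \leq 2 |v_i|$, that is, $|u_i|^p \leq 2^p |v_i|^p$. If $|u_i| < \eta$, then trivially $|u_i|^p < \eta^p$. Both cases are unified by the single pointwise inequality $|u_i|^p \leq 2^p |v_i|^p + \eta^p$, whose empirical average over $i \in \ieg 1, n \ied$ reads $d_{p, \mathbf{t}_n}(f_1, f_2)^p \leq 2^p \, d_{p, \mathbf{t}_n}(f_1^{(\eta)}, f_2^{(\eta)})^p + \eta^p$. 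Combining with $d_{p, \mathbf{t}_n}(f_1, f_2) \geq \epsilon$ and rearranging delivers exactly $d_{p, \mathbf{t}_n}(f_1^{(\eta)}, f_2^{(\eta)}) \geq \frac{(\epsilon^p - \eta^p)^{1/p}}{2}$.

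The main subtlety will be recognising that the discrete structure of $v_i$ is essential here: a naive Minkowski argument using only $|u_i - v_i| < \eta$ would yield at best $d_{p, \mathbf{t}_n}(f_1^{(\eta)}, f_2^{(\eta)}) > \epsilon - \eta$, which is weaker than the announced bound as soon as $\eta$ approaches $\epsilon$. The inequality between packing numbers then follows by applying both conclusions to a maximal $\epsilon$-separated subset of $\mathcal{F}$ of cardinality $\mathcal{M}(\epsilon, \mathcal{F}, d_{p, \mathbf{t}_n})$: its image under $(\cdot)^{(\eta)}$ has the same cardinality (by injectivity) and is $\frac{(\epsilon^p - \eta^p)^{1/p}}{2}$-separated in $\mathcal{F}^{(\eta)}$.
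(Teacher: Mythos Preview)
Your proof is correct and follows essentially the same approach as the paper's: both rest on the two structural facts that $v_i$ is an integer multiple of $\eta$ and $|u_i - v_i| < \eta$, leading to the key pointwise bound $|u_i|^p \leq 2^p |v_i|^p + \eta^p$ (the paper writes this as $(|\delta_i| + |\delta'_i|)^p < (2|\delta_i|)^p + \eta^p$ with $\delta_i = v_i$, $\delta'_i = u_i - v_i$). The only cosmetic difference is that the paper splits cases on whether $v_i = 0$ while you split on whether $|u_i| \geq \eta$, and you spell out the injectivity explicitly whereas the paper leaves it implicit in the strict positivity of the separation bound.
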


\begin{proof}
Proving Lemma~\ref{lemma:lemma-3.2.2-in-AloBenCesHau97-in-L_p}
amounts to establishing that
\begin{equation}
\label{eq:lemma-3.2.2-in-AloBenCesHau97_in_L_p}
\forall \left ( f_1, f_2 \right ) \in \mathcal{F}^2, \;\;
\left ( d_{p, \mathbf{t}_n} 
\left ( f_1, f_2 \right ) \geqslant \epsilon \right )
\wedge \left ( \eta \in \left ( 0, \epsilon \right ) \right )
\Longrightarrow
d_{p, \mathbf{t}_n} 
\left ( f_1^{(\eta)}, f_2^{(\eta)} \right ) 
\geqslant \frac{\left ( \epsilon^p - \eta^p \right )^{\frac{1}{p}}}{2}.
\end{equation}
For $i \in \ieg 1, n \ied$, let 
$\delta_i = \left ( f_1^{(\eta)} \left ( t_i \right ) 
- f_2^{(\eta)} \left ( t_i \right ) \right )$ and
$\delta'_i = f_1 \left ( t_i \right ) - f_2 \left ( t_i \right ) - \delta_i$.
By construction, there exists $N_i \in \mathbb{N}$ such that
$\left | \delta_i \right | = \eta N_i$, and
$\left | \delta'_i \right | < \eta$.
If $N_i > 0$, then $\left | \delta_i \right | + \left | \delta'_i \right |
< 2 \left | \delta_i \right |$,
otherwise $\left | \delta_i \right | + \left | \delta'_i \right | < \eta$,
with the consequence that in all cases,
$\left ( \left | \delta_i \right |
+ \left | \delta'_i \right | \right )^p 
< \left ( 2 \left | \delta_i \right | \right )^p + \eta^p$.
Thus,
\begin{align*}
\left (
d_{p, \mathbf{t}_n} \left ( f_1, f_2 \right ) \geqslant \epsilon \right )
\wedge \left ( \eta \in \left ( 0, \epsilon \right ) \right )
& \Longrightarrow \;
\frac{1}{n} \sum_{i=1}^n \left | \delta_i + \delta'_i \right |^p
\geqslant \epsilon^p \\
& \Longrightarrow \;
\frac{1}{n} \sum_{i=1}^n \left ( \left | \delta_i \right |
+ \left | \delta'_i \right | \right )^p
\geqslant \epsilon^p \\
& \Longrightarrow \;
\frac{1}{n} \sum_{i=1}^n \left ( 2 \left | \delta_i \right | \right )^p + \eta^p
\geqslant \epsilon^p \\
& \Longrightarrow \; 
\left ( 2 d_{p, \mathbf{t}_n} \left ( f_1^{(\eta)}, f_2^{(\eta)} \right )
\right )^p + \eta^p \geqslant \epsilon^p \\
& \Longrightarrow \;
d_{p, \mathbf{t}_n} \left ( f_1^{(\eta)}, f_2^{(\eta)} \right ) \geqslant 
\frac{\left ( \epsilon^p - \eta^p \right )^{\frac{1}{p}}}{2}.
\end{align*}
To sum up, we have established (\ref{eq:lemma-3.2.2-in-AloBenCesHau97_in_L_p}),
i.e., the lemma.
\end{proof}

\begin{lemma}
\label{lemma:lemma-3.2.1-in-AloBenCesHau97-in-L_p}
Let $\mathcal{F}$ be a class of functions from $\mathcal{T}$ into
$\left [ -M_{\mathcal{F}}, M_{\mathcal{F}} \right ]$ with
$M_{\mathcal{F}} \in \mathbb{R}_+^*$.
For all $\epsilon \in \left ( 0, M_{\mathcal{F}} \right ]$
and all $\eta \in \left ( 0, 2 \epsilon \right )$,
$$
\epsilon\mbox{-dim} \left ( \mathcal{F}^{(\eta)} \right )
\leqslant \left ( \epsilon - \frac{\eta}{2} \right )\mbox{-dim} 
\left ( \mathcal{F} \right ).
$$
\end{lemma}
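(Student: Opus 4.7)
The plan is a direct translation argument on shattered sets. Suppose that
$s_{\mathcal{T}^n} = \left\{ t_i : 1 \leqslant i \leqslant n \right\}$
is $\epsilon$-shattered by $\mathcal{F}^{(\eta)}$, with witness
$\mathbf{b}_n = \left( b_i \right)_{1 \leqslant i \leqslant n}$
in the (discretized) codomain of $\mathcal{F}^{(\eta)}$. The goal is to exhibit
a witness $\mathbf{b}'_n \in \left[ -M_{\mathcal{F}}, M_{\mathcal{F}} \right]^n$
and, for every $\mathbf{l}_n \in \left\{ -1, 1 \right\}^n$, a function
$f_{\mathbf{l}_n} \in \mathcal{F}$ showing that the same set is
$\left( \epsilon - \eta/2 \right)$-shattered by $\mathcal{F}$. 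This will
immediately yield $\epsilon\mbox{-dim}( \mathcal{F}^{(\eta)} )
\leqslant ( \epsilon - \eta/2 )\mbox{-dim}( \mathcal{F} )$.

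First I would record the pointwise defect induced by the $\eta$-discretization.
From the definition of $\left( \cdot \right)^{\left( \eta \right)}$, for every
$f \in \mathcal{F}$ and $t \in \mathcal{T}$,
$f^{(\eta)}(t) \leqslant f(t) + M_{\mathcal{F}} < f^{(\eta)}(t) + \eta$, so
$f(t) - \left( f^{(\eta)}(t) - M_{\mathcal{F}} + \eta/2 \right) \in
\left[ -\eta/2, \eta/2 \right)$.
Next, for each $\mathbf{l}_n \in \left\{ -1, 1 \right\}^n$, let
$f^{(\eta)}_{\mathbf{l}_n} \in \mathcal{F}^{(\eta)}$ be a function realizing
the shattering with witness $\mathbf{b}_n$, and let
$f_{\mathbf{l}_n} \in \mathcal{F}$ be any preimage of
$f^{(\eta)}_{\mathbf{l}_n}$ under the discretization operator. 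Define the
shifted witness $b'_i = b_i - M_{\mathcal{F}} + \eta/2$. Combining the
pointwise defect with the shattering inequality
$l_i \left( f^{(\eta)}_{\mathbf{l}_n}(t_i) - b_i \right) \geqslant \epsilon$
gives
$l_i \left( f_{\mathbf{l}_n}(t_i) - b'_i \right)
\geqslant l_i \left( f^{(\eta)}_{\mathbf{l}_n}(t_i) - b_i \right) - \eta/2
\geqslant \epsilon - \eta/2$
for all $i$, regardless of the sign $l_i$.

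It remains to check that $\mathbf{b}'_n$ is an admissible witness, i.e.,
$b'_i \in \left[ -M_{\mathcal{F}}, M_{\mathcal{F}} \right]$ for every $i$.
Since $\mathbf{b}_n$ witnesses an $\epsilon$-shattering of $\mathcal{F}^{(\eta)}$
and the values of $f^{(\eta)}_{\mathbf{l}_n}$ lie in
$\left[ 0, 2 M_{\mathcal{F}} \right]$, one necessarily has
$b_i \in \left[ \epsilon, 2 M_{\mathcal{F}} - \epsilon \right]$, which together
with the hypothesis $\eta < 2 \epsilon$ gives
$b'_i \in \left[ \epsilon - M_{\mathcal{F}} + \eta/2,
M_{\mathcal{F}} - \epsilon + \eta/2 \right] \subset
\left[ -M_{\mathcal{F}}, M_{\mathcal{F}} \right]$. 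Hence
$s_{\mathcal{T}^n}$ is $\left( \epsilon - \eta/2 \right)$-shattered by
$\mathcal{F}$, and the announced inequality on fat-shattering dimensions
follows by taking the supremum over shattered subsets.

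The conceptual content is elementary: the discretization shifts every function
value by less than $\eta$ relative to the centered grid $\left\{ j \eta -
M_{\mathcal{F}} + \eta/2 : j \in \mathbb{Z} \right\}$, so translating the
witness by $-M_{\mathcal{F}} + \eta/2$ recovers all but $\eta/2$ of the margin.
The main bookkeeping obstacle is ensuring that $\mathbf{b}'_n$ remains inside
the codomain of $\mathcal{F}$; the constraint $\eta < 2 \epsilon$ in the
statement is precisely what is needed to guarantee that the admissible window
for $b_i$ is non-empty and that the shifted witness stays in
$\left[ -M_{\mathcal{F}}, M_{\mathcal{F}} \right]$.
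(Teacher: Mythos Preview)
Your proof is correct and follows the same shift-of-witness argument as the paper: both use the new witness $b'_i = b_i - M_{\mathcal{F}} + \eta/2$ and exploit the pointwise defect $f(t) - \bigl(f^{(\eta)}(t) - M_{\mathcal{F}} + \eta/2\bigr) \in [-\eta/2,\eta/2)$ to recover an $(\epsilon-\eta/2)$-shattering of $\mathcal{F}$. Your write-up is actually more thorough than the paper's, which omits the verification that $\mathbf{b}'_n \in [-M_{\mathcal{F}},M_{\mathcal{F}}]^n$ required by Definition~\ref{definition:fat_shattering_dim}; your observation that this is exactly where the hypothesis $\eta < 2\epsilon$ enters is a nice addition.
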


\begin{proof}
To prove Lemma~\ref{lemma:lemma-3.2.1-in-AloBenCesHau97-in-L_p},
it suffices to notice that
\begin{align*}
f^{\left ( \eta \right )} \left ( t \right ) - b \geqslant \epsilon
& \Longrightarrow \;
\eta \left \lfloor \frac{f \left ( t \right )
+ M_{\mathcal{F}}}{\eta} \right \rfloor - b \geqslant \epsilon \\
& \Longrightarrow \;
f \left ( t \right ) + M_{\mathcal{F}} - b \geqslant \epsilon \\
& \Longrightarrow \;
f \left ( t \right ) - \left ( b + \frac{\eta}{2} - M_{\mathcal{F}} \right )
\geqslant \epsilon - \frac{\eta}{2}
\end{align*}
and
\begin{align*}
f^{\left ( \eta \right )} \left ( t \right ) - b \leqslant - \epsilon
& \Longrightarrow \;
\eta \left \lfloor \frac{f \left ( t \right )
+ M_{\mathcal{F}}}{\eta} \right \rfloor - b \leqslant - \epsilon \\
& \Longrightarrow \;
f \left ( t \right ) + M_{\mathcal{F}} - \eta - b \leqslant -\epsilon \\
& \Longrightarrow \;
f \left ( t \right ) - \left ( b + \frac{\eta}{2} - M_{\mathcal{F}} \right )
\leqslant - \left ( \epsilon - \frac{\eta}{2} \right ).
\end{align*}
\end{proof}
In the framework of this study, the main combinatorial result
evoqued in Section~\ref{sec:capacity-measures} is
the following lemma, which extends Lemma~8 in \cite{BarLon95}.

\begin{lemma}
\label{lemma:Lemma-8-in-BarLon95-in-L_p}
Let $\mathcal{T} = \left \{ t_i: 1 \leqslant i \leqslant n \right \}$
be a finite set and
$\mathbf{t}_n = \left( t_i \right)_{1 \leqslant i \leqslant n}$.
Let $\mathcal{F}$ be a class of functions
from $\mathcal{T}$ into 
$\mathcal{S} = \left \{ 2 M_{\mathcal{F}} \frac{j}{N}: 
0 \leqslant j \leqslant N \right \}$
with $M_{\mathcal{F}} \in \mathbb{R}_+^*$ and 
$N \in \mathbb{N} \setminus \ieg 0, 3 \ied$.
For $\epsilon \in \left ( \frac{6 M_{\mathcal{F}}}{N}, 
2 M_{\mathcal{F}} \right ]$, let
$d = \left ( \frac{\epsilon}{2} 
- \frac{3 M_{\mathcal{F}}}{N} \right )\mbox{-dim} 
\left ( \mathcal{F} \right )$. Then
\begin{equation}
\label{eq:Sauer-Shelah-L_p}
\forall p \in \mathbb{N}^*, \;\;
\mathcal{M} \left ( \epsilon, \mathcal{F}, d_{p, \mathbf{t}_n} \right ) 
< 2^{\left ( p+2 \right ) \log_2 \left ( N \right ) + 1}
\left ( \frac{e \left ( N-1 \right ) n}{d} 
\right )^{\left ( p+2 \right ) \log_2 \left ( N \right ) d}.
\end{equation}
\end{lemma}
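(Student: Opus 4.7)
My plan is to follow the combinatorial strategy of Bartlett and Long's Lemma~8 (the $L_\infty$ case), adapting it to the $L_p$ setting. The extra factor $(p+2)$ in the exponent of the bound signals how the $L_p$-relaxation propagates through the argument relative to the $L_\infty$ analogue.

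First, I would fix a maximal $\epsilon$-separated subset $\mathcal{F}_\epsilon \subseteq \mathcal{F}$ with respect to $d_{p, \mathbf{t}_n}$, so $\left|\mathcal{F}_\epsilon\right| = \mathcal{M}\left(\epsilon, \mathcal{F}, d_{p, \mathbf{t}_n}\right)$, and encode each $f \in \mathcal{F}_\epsilon$ as its binary profile across the $N-1$ thresholds $\theta_c = 2 M_{\mathcal{F}} c / N$, $c \in \ieg 1, N-1 \ied$. Since the codomain $\mathcal{S}$ is discrete with step $2M_{\mathcal{F}}/N$, this encoding is injective, and for any pair $\left(f, f'\right)$ one has $\left|f\left(t_i\right) - f'\left(t_i\right)\right| = \left(2 M_{\mathcal{F}}/N\right) \sum_{c=1}^{N-1} \left|\nbOne_{\left\{f\left(t_i\right) \geqslant \theta_c\right\}} - \nbOne_{\left\{f'\left(t_i\right) \geqslant \theta_c\right\}}\right|$. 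Using the boundedness $\left|f\left(t_i\right) - f'\left(t_i\right)\right| \leqslant 2 M_{\mathcal{F}}$, the $L_p$-separation $\epsilon^p \leqslant \frac{1}{n}\sum_i \left|f\left(t_i\right) - f'\left(t_i\right)\right|^p$ transfers into a combinatorial lower bound on the number of index--threshold pairs $(i,c)$ at which the binary profiles of $f$ and $f'$ disagree.

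Next, I would run the Bartlett--Long permutation/counting argument on the resulting binary-valued family over the index set $\ieg 1, n \ied \times \ieg 1, N-1 \ied$. The argument produces, from a sufficiently large separated class, a fat-shattered configuration whose witness vector is compatible with the discretization. The crucial bookkeeping step is to check that the margin of the extracted shattering is at least $\epsilon/2 - 3 M_{\mathcal{F}}/N$: the $\epsilon/2$ comes from the usual halving needed to turn pairwise separation into a witness-centered configuration, and the $3 M_{\mathcal{F}}/N$ comes from the granularity of the threshold grid (one discretization step for each of $f$, $f'$ and the witness $b$). Once this margin is identified, Lemma~\ref{lemma:lemma-3.2.1-in-AloBenCesHau97-in-L_p}-type transfer reduces the fat-shattering dimension of the binary family to $d = \left(\epsilon/2 - 3 M_{\mathcal{F}}/N\right)\mbox{-dim}\left(\mathcal{F}\right)$, and a Sauer--Shelah-type upper bound on the total binary footprint yields the polynomial factor $\left(e (N-1) n / d\right)^{(p+2) \log_2(N) d}$ together with the leading $2^{(p+2) \log_2(N) + 1}$.

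The main obstacle I anticipate is the combinatorial core: Bartlett--Long exploits the fact that $L_\infty$-separation forces each pair to differ by at least $\epsilon$ on some single coordinate, which underlies their tree-of-depth $\log_2(N)$ construction. In $L_p$, the discriminating mass is spread across coordinates, so the binary encoding step is essential to recover a ``single-coordinate'' style disagreement, and the net effect is that each level of the tree must be replicated $p+2$ times (once for the natural discretization and $p+1$ times to account for the $L_p$-aggregation). Keeping the constants $\epsilon/2$, $3 M_{\mathcal{F}}/N$, $N-1$ and $(p+2) \log_2(N)$ mutually consistent, especially under the hypothesis $\epsilon > 6 M_{\mathcal{F}}/N$, will be the principal technical burden.
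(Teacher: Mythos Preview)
Your proposal diverges from the paper's proof and, as written, has a genuine gap at the combinatorial core. The paper does not binarize via thresholds. It runs the Bartlett--Long recursive shattering count directly on the $\mathcal{S}$-valued functions: for even $r$, split an $\epsilon$-separated set of size $r$ into $r/2$ pairs $(f_+,f_-)$; the $L_p$-separation together with boundedness forces each pair to satisfy $|f_+(t_i)-f_-(t_i)|\geqslant \epsilon-2M_{\mathcal{F}}/N$ on at least $n/N^p$ indices (this Markov-type count is where the $p$ enters); a first pigeonhole produces an index $i_0$ common to at least $r/(2N^p)$ pairs, and a second pigeonhole over the fewer-than-$N^2/2$ admissible value pairs in $\mathcal{S}^2$ isolates at least $r/N^{p+2}$ pairs sharing the same values at $i_0$. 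This yields the recursion $\mbox{shat}(r)\geqslant 2\,\mbox{shat}\left(\left\lceil r/N^{p+2}\right\rceil\right)$, which, iterated against the count $K=\sum_{j\leqslant d}\binom{n}{j}(N-1)^j$ of witness pairs, gives the bound. The exponent $(p+2)\log_2(N)$ thus arises from two distinct pigeonhole losses, $N^p$ and $N^2$, not from ``replicating the tree $p+2$ times''.

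Your threshold scheme runs into two concrete obstacles. First, the disagreement count $\sum_i\sum_c\left|\nbOne_{\{f(t_i)\geqslant\theta_c\}}-\nbOne_{\{f'(t_i)\geqslant\theta_c\}}\right|$ equals $(N/2M_{\mathcal{F}})\cdot n\cdot d_{1,\mathbf{t}_n}(f,f')$, an $L_1$ quantity; since $d_{1,\mathbf{t}_n}\leqslant d_{p,\mathbf{t}_n}$ for empirical measures, the $L_p$-hypothesis does not directly lower-bound it, and invoking boundedness costs a factor $(\epsilon/2M_{\mathcal{F}})^{p-1}$ whose propagation to the claimed exponent you have not traced. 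Second, your appeal to a ``Lemma~\ref{lemma:lemma-3.2.1-in-AloBenCesHau97-in-L_p}-type transfer'' to recover $d$ is misplaced: that lemma compares $\epsilon\mbox{-dim}\left(\mathcal{F}^{(\eta)}\right)$ with $\left(\epsilon-\eta/2\right)\mbox{-dim}\left(\mathcal{F}\right)$ for the $\eta$-discretized class, not for a binary threshold family on the enlarged index set $\ieg 1,n\ied\times\ieg 1,N-1\ied$, so the step that converts shattering of the binary family back to $\left(\epsilon/2-3M_{\mathcal{F}}/N\right)$-shattering of $\mathcal{F}$ is unsupported. In the paper, the margin $\epsilon/2-3M_{\mathcal{F}}/N$ comes instead from choosing the witness $b_0=\frac{2M_{\mathcal{F}}}{N}\left\lfloor\frac{N}{4M_{\mathcal{F}}}(s_++s_-)\right\rfloor$ at the pivot coordinate and checking the two inequalities $f_\pm(t_{i_0})-b_0\gtrless\pm(\epsilon/2-3M_{\mathcal{F}}/N)$ directly.
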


\begin{proof}
First, note that
$$
\begin{cases}
d \geqslant 1 \\
\mathcal{M} \left ( \epsilon, \mathcal{F}, d_{p, \mathbf{t}_n} \right )
\geqslant 2
\end{cases}
\Longrightarrow
\begin{cases}
\epsilon \in \left ( \frac{6 M_{\mathcal{F}}}{N}, 
2 M_{\mathcal{F}} + \frac{6 M_{\mathcal{F}}}{N} \right ] \\
\epsilon \in \left ( 0, 2 M_{\mathcal{F}} \right ]
\end{cases}
\Longrightarrow
\begin{cases}
\epsilon \in \left ( \frac{6 M_{\mathcal{F}}}{N}, 
2 M_{\mathcal{F}} \right ] \\
N > 3
\end{cases}.
$$
For $q \in \ieg 1, d \ied$,
let the pair $\left ( s_{\mathcal{T}^q}, \mathbf{b}_q \right )$
be such that $s_{\mathcal{T}^q}$ is a subset of $\mathcal{T}$ of cardinality $q$
and $\mathbf{b}_q \in \left ( \mathcal{S} \setminus 
\left \{ 0, 2 M_{\mathcal{F}} \right \} \right )^q$.
Such a pair will be said to be $\gamma$-shattered by a subset of
$\mathcal{F}$ if $s_{\mathcal{T}^q}$ is $\gamma$-shattered by
this subset and $\mathbf{b}_q$ is a witness to this shattering.
Setting $K = \sum_{j=0}^d { n \choose j} \left ( N - 1 \right )^j$,
the number of such pairs is equal to
$\sum_{j=1}^d { n \choose j} \left ( N - 1 \right )^j$, i.e., to $K-1$.
Fix $\epsilon \in \left ( \frac{6 M_{\mathcal{F}}}{N}, 
2 M_{\mathcal{F}} \right ]$
and $p \in \mathbb{N}^*$.
For each $r \in \ieg 2, \mathcal{M} \left ( \epsilon, \mathcal{F}, 
d_{p, \mathbf{t}_n} \right ) \ied$,
let $\mbox{shat} \left ( r \right )$
be the maximum integer such that any subset of $\mathcal{F}$ of
cardinality $r$ which is $\epsilon$-separated in the metric
$d_{p, \mathbf{t}_n}$
$\left ( \frac{\epsilon}{2} - \frac{3 M_{\mathcal{F}}}{N} \right )$-shatters
at least $\mbox{shat} \left ( r \right )$ pairs
$\left ( s_{\mathcal{T}^q}, \mathbf{b}_q \right )$.
Obviously, the function $\mbox{shat}$ is nondecreasing.
We now establish that $\mbox{shat} \left ( 2 \right ) \geqslant 1$.
Indeed, let $\left \{ f_+, f_- \right \}$ be a subset of $\mathcal{F}$
$\epsilon$-separated in the metric $d_{p, \mathbf{t}_n}$.
By definition,
$$
\left ( \frac{1}{n} \sum_{i=1}^n \left | f_+  \left ( t_i \right )
- f_- \left ( t_i \right ) \right |^p \right )^{\frac{1}{p}}
\geqslant \epsilon,
$$
with the consequence that there exists $i_0 \in \ieg 1, n \ied$
such that $\left | f_+  \left ( t_{i_0} \right )
- f_- \left ( t_{i_0} \right ) \right | \geqslant \epsilon$.
Without loss of generality, we make the hypothesis that
$f_+  \left ( t_{i_0} \right ) - f_- \left ( t_{i_0} \right ) 
\geqslant \epsilon$. Then,
\begin{align*}
f_+ \left ( t_{i_0}  \right )
- \frac{2 M_{\mathcal{F}}}{N} \left \lfloor \frac{N}{4 M_{\mathcal{F}}} 
\left ( f_+  \left ( t_{i_0} \right ) + f_- \left ( t_{i_0} \right ) \right ) 
\right \rfloor
& \geqslant \; f_+ \left ( t_{i_0}  \right ) - \frac{1}{2}
\left ( f_+  \left ( t_{i_0} \right ) + f_- \left ( t_{i_0} \right ) \right ) \\
& \geqslant \; \frac{1}{2} \left ( f_+  \left ( t_{i_0} \right )
- f_- \left ( t_{i_0} \right ) \right ) \\
& \geqslant \; \frac{\epsilon}{2} \\
& \geqslant \; \frac{\epsilon}{2} - \frac{3 M_{\mathcal{F}}}{N}
\end{align*}
and
\begin{align*}
f_- \left ( t_{i_0}  \right )
- \frac{2 M_{\mathcal{F}}}{N} \left \lfloor \frac{N}{4 M_{\mathcal{F}}} 
\left ( f_+  \left ( t_{i_0} \right ) + f_- \left ( t_{i_0} \right ) \right ) 
\right \rfloor
& \leqslant \; 
f_- \left ( t_{i_0}  \right ) - \frac{1}{2}
\left ( f_+  \left ( t_{i_0} \right ) + f_- \left ( t_{i_0} \right ) \right ) 
+ \frac{2 M_{\mathcal{F}}}{N} \\
& \leqslant \;  \frac{1}{2}
\left ( f_-  \left ( t_{i_0} \right ) - f_+ \left ( t_{i_0} \right ) \right ) 
+ \frac{2 M_{\mathcal{F}}}{N} \\
& \leqslant \;   
- \frac{\epsilon}{2} + \frac{2 M_{\mathcal{F}}}{N} \\
& \leqslant \; 
- \left ( \frac{\epsilon}{2} - \frac{3 M_{\mathcal{F}}}{N} \right ).
\end{align*}
Since $\frac{2 M_{\mathcal{F}}}{N} 
\left \lfloor \frac{N}{4 M_{\mathcal{F}}} 
\left ( f_+  \left ( t_{i_0} \right ) + f_- \left ( t_{i_0} \right ) \right ) 
\right \rfloor 
\in \mathcal{S} \setminus \left \{ 0, 2 M_{\mathcal{F}} \right \}$,
we have established that the set $\left \{ f_+, f_- \right \}$
$\left ( \frac{\epsilon}{2} - \frac{3 M_{\mathcal{F}}}{N} \right )$-shatters
$\left ( \left \{ t_{i_0} \right \}, \mathbf{b}_1 \right )$ 
with $\mathbf{b}_1 = \left ( \frac{2 M_{\mathcal{F}}}{N} 
\left \lfloor \frac{N}{4 M_{\mathcal{F}}} 
\left ( f_+  \left ( t_{i_0} \right ) + f_- \left ( t_{i_0} \right ) \right ) 
\right \rfloor \right )$, which concludes the proof of
$\mbox{shat} \left ( 2 \right ) \geqslant 1$.
Choose an even $r \in \ieg 2, \mathcal{M} \left ( \epsilon, \mathcal{F}, 
d_{p, \mathbf{t}_n} \right ) \ied$ and
let $\bar{\mathcal{F}}$ be a subset of $\mathcal{F}$
of cardinality $r$
$\epsilon$-separated in the metric $d_{p, \mathbf{t}_n}$.
Split $\bar{\mathcal{F}}$ arbitrarily into $\frac{r}{2}$ pairs.
For each such pair $\left ( f_+, f_- \right )$, let
$$
\mbox{ind} \left ( f_+, f_- \right ) = \left |
\left \{ i \in \ieg 1, n \ied: \; \left | f_+ \left ( t_i \right ) 
- f_- \left ( t_i \right ) \right | \geqslant \epsilon 
- \frac{2 M_{\mathcal{F}}}{N} \right \}
\right |.
$$
Then
\begin{align*}
d_{p, \mathbf{t}_n}  \left ( f_+, f_- \right )
& = \; \left ( \frac{1}{n} \sum_{i=1}^n \left | f_+  \left ( t_i \right )
- f_- \left ( t_i \right ) \right |^p \right )^{\frac{1}{p}} \\
& \leqslant \; \left \{ \frac{1}{n} \left [
\mbox{ind} \left ( f_+, f_- \right ) \left ( 2 M_{\mathcal{F}} \right )^p
+ \left ( n - \mbox{ind} \left ( f_+, f_- \right ) \right )
\left ( \epsilon - \frac{2 M_{\mathcal{F}}}{N} \right )^p \right ] 
\right \}^{\frac{1}{p}} \\
& \leqslant \;
\left [ \frac{\mbox{ind} \left ( f_+, f_- \right )}{n} 
\left ( 2 M_{\mathcal{F}} \right )^p
+ \left ( \epsilon - \frac{2 M_{\mathcal{F}}}{N} \right )^p 
\right ]^{\frac{1}{p}}.
\end{align*}
By hypothesis,
$d_{p, \mathbf{t}_n}  \left ( f_+, f_- \right ) \geqslant \epsilon
> \frac{6 M_{\mathcal{F}}}{N}$,
which implies that 
\begin{align*}
\mbox{ind} \left ( f_+, f_- \right )
& \geqslant \;
\frac{n}{\left ( 2 M_{\mathcal{F}} \right )^p}
\left [ \epsilon^p - \left ( \epsilon - 
\frac{2 M_{\mathcal{F}}}{N} \right )^p \right ] \\
& \geqslant \; \frac{n}{N \left ( 2 M_{\mathcal{F}} \right )^{p-1}} 
\sum_{j=0}^{p-1} \epsilon^{p-j-1}
\left ( \epsilon - \frac{2 M_{\mathcal{F}}}{N} \right )^j \\
& \geqslant \; \frac{n}{N \left ( 2 M_{\mathcal{F}} \right )^{p-1}} 
\sum_{j=0}^{p-1} \left ( \frac{6 M_{\mathcal{F}}}{N} \right )^{p-j-1}
\left ( \frac{4 M_{\mathcal{F}}}{N} \right )^j \\
& \geqslant \; \frac{n}{N^p} \sum_{j=0}^{p-1} 3^{p-j-1} 2^j \\
& \geqslant \; \frac{3^p - 2^p}{N^p} n \\
& \geqslant \; \frac{n}{N^p}.
\end{align*}
Thus, each pair $\left ( f_+, f_- \right )$
has at least $\frac{n}{N^p}$ indices $i$ such that
$\left | f_+  \left ( t_i \right ) - f_- \left ( t_i \right ) \right |
\geqslant \epsilon - \frac{2 M_{\mathcal{F}}}{N}$. 
Applying the pigeonhole principle,
there is at least one index $i_0$ such that at least
$\left \lceil \frac{r n}{2 N^p n} \right \rceil 
= \left \lceil \frac{r}{2 N^p} \right \rceil$
pairs $\left ( f_+, f_- \right )$ satisfy
$\left | f_+  \left ( t_{i_0} \right ) - f_- \left ( t_{i_0} \right ) \right |
\geqslant \epsilon - \frac{2 M_{\mathcal{F}}}{N}$.
Keeping in mind that 
$\epsilon - \frac{2 M_{\mathcal{F}}}{N} > \frac{4 M_{\mathcal{F}}}{N}$, 
it is easy to establish that there are less than $\frac{N^2}{2}$ different pairs
$\left ( u_1, u_2 \right ) \in \mathcal{S}^2$ such that
$\left | u_1 - u_2 \right | \geqslant \epsilon - \frac{2 M_{\mathcal{F}}}{N}$.
Thus, applying once more the pigeonhole principle, there are at least
$\left \lceil \frac{r}{N^{p+2}} \right \rceil$
pairs $\left ( f_+, f_- \right )$ such that
$\left | f_+  \left ( t_{i_0} \right ) - f_- \left ( t_{i_0} \right ) \right |
\geqslant \epsilon - \frac{2 M_{\mathcal{F}}}{N}$ and
the pair $\left ( f_+ \left ( t_{i_0} \right ),
f_- \left ( t_{i_0} \right ) \right )$ is the same.
This implies that there is a quintuplet
$\left ( i_0, s_+, s_-, \bar{\mathcal{F}}_+, \bar{\mathcal{F}}_- \right )$
such that $i_0 \in \ieg 1, n \ied$,
$\left ( s_+, s_- \right ) \in \mathcal{S}^2$
with $s_+ - s_- \geqslant \epsilon - \frac{2 M_{\mathcal{F}}}{N}$,
$\bar{\mathcal{F}}_+$ and $\bar{\mathcal{F}}_-$
are two subsets of $\bar{\mathcal{F}}$ of cardinality at least
$\left \lceil \frac{r}{N^{p+2}} \right \rceil$,
and for each $ \left ( f_+, f_- \right ) \in
\bar{\mathcal{F}}_+ \times \bar{\mathcal{F}}_-$,
the ordered pairs
$ \left ( f_+ \left ( t_{i_0} \right ), f_- \left ( t_{i_0} \right ) \right )$
and $\left ( s_+, s_- \right )$ are identical.
Obviously, any two functions in $\bar{\mathcal{F}}_+$
are $\epsilon$-separated in the metric
$d_{p, \mathbf{t}_n}$, and the same holds true for $\bar{\mathcal{F}}_-$.
So, by definition, both $\bar{\mathcal{F}}_+$ and
$\bar{\mathcal{F}}_-$
$\left ( \frac{\epsilon}{2} - \frac{3 M_{\mathcal{F}}}{N} \right )$-shatter 
at least
$\mbox{shat} \left ( 
\left \lceil \frac{r}{N^{p+2}} \right \rceil \right )$ pairs.
Neither $\bar{\mathcal{F}}_+$ nor $\bar{\mathcal{F}}_-$ 
$\left ( \frac{\epsilon}{2} - \frac{3 M_{\mathcal{F}}}{N} \right )$-shatters
any pair $\left ( s_{\mathcal{T}^q}, \mathbf{b}_q \right )$
such that $\left \{ t_{i_0} \right \} \subset s_{\mathcal{T}^q}$.
If the same pair $\left ( s_{\mathcal{T}^q}, \mathbf{b}_q \right )$ is
$\left ( \frac{\epsilon}{2} - \frac{3 M_{\mathcal{F}}}{N} \right )$-shattered
by both sets, then the pair
$\left ( s'_{\mathcal{T}^{q+1}}, \mathbf{b}'_{q+1} \right )$
where $s'_{\mathcal{T}^{q+1}} = 
\left \{ t_{i_0} \right \} \bigcup s_{\mathcal{T}^q}$
and $\mathbf{b}'_{q+1} = \left ( b_0 \; \mathbf{b}_q^T \right )^T$ is
the vector deduced from $\mathbf{b}_q$ by adding one component $b_0$
corresponding to the point $t_{i_0}$, component equal to
$\frac{2 M_{\mathcal{F}}}{N} \left \lfloor \frac{N}{4 M_{\mathcal{F}}} 
\left ( s_+ + s_- \right ) \right \rfloor$,
is $\left ( \frac{\epsilon}{2} - \frac{3 M_{\mathcal{F}}}{N} \right )$-shattered
by $\bar{\mathcal{F}}$.
Indeed,
\begin{align*}
\forall f_+ \in \bar{\mathcal{F}}_+, \;\; f_+ \left ( t_{i_0}  \right )
- \frac{2 M_{\mathcal{F}}}{N} \left \lfloor \frac{N}{4 M_{\mathcal{F}}} 
\left ( s_+ + s_- \right ) \right \rfloor
& = \; 
s_+ - \frac{2 M_{\mathcal{F}}}{N} \left \lfloor \frac{N}{4 M_{\mathcal{F}}} 
\left ( s_+ + s_- \right ) \right \rfloor \\
& \geqslant s_+ - \frac{1}{2} \left ( s_+ + s_- \right ) \\
& \geqslant \frac{1}{2} \left ( s_+ - s_- \right ) \\
& \geqslant \frac{1}{2} \left ( \epsilon 
- \frac{2 M_{\mathcal{F}}}{N} \right ) \\
& \geqslant \frac{\epsilon}{2} - \frac{3 M_{\mathcal{F}}}{N}
\end{align*}
and
\begin{align*}
\forall f_- \in \bar{\mathcal{F}}_-, \;\; f_- \left ( t_{i_0}  \right )
- \frac{2 M_{\mathcal{F}}}{N} \left \lfloor \frac{N}{4 M_{\mathcal{F}}} 
\left ( s_+ + s_- \right ) \right \rfloor
& = \; 
s_- - \frac{2 M_{\mathcal{F}}}{N} \left \lfloor \frac{N}{4 M_{\mathcal{F}}} 
\left ( s_+ + s_- \right ) \right \rfloor \\
& \leqslant 
\frac{1}{2} \left ( s_- - s_+ \right ) + \frac{2 M_{\mathcal{F}}}{N} \\
& \leqslant 
\frac{1}{2} \left ( \frac{2 M_{\mathcal{F}}}{N} - \epsilon \right ) 
+ \frac{2 M_{\mathcal{F}}}{N} \\
& \leqslant - \left ( \frac{\epsilon}{2} - \frac{3 M_{\mathcal{F}}}{N} \right ).
\end{align*}
Summarizing, for each pair $\left ( s_{\mathcal{T}^q}, \mathbf{b}_q \right )$
$\left ( \frac{\epsilon}{2} - \frac{3 M_{\mathcal{F}}}{N} \right )$-shattered 
by both $\bar{\mathcal{F}}_+$ and $\bar{\mathcal{F}}_-$,
we can exhibit by means of an injective mapping a pair
$\left ( s'_{\mathcal{T}^{q+1}}, \mathbf{b}'_{q+1} \right )$
$\left ( \frac{\epsilon}{2} - \frac{3 M_{\mathcal{F}}}{N} \right )$-shattered 
by $\bar{\mathcal{F}}$ but not by
$\bar{\mathcal{F}}_+$ or $\bar{\mathcal{F}}_-$, so that the number of pairs
$\left ( \frac{\epsilon}{2} - \frac{3 M_{\mathcal{F}}}{N} \right )$-shattered
by $\bar{\mathcal{F}}$ is superior to the sum of the number of pairs
$\left ( \frac{\epsilon}{2} - \frac{3 M_{\mathcal{F}}}{N} \right )$-shattered
by $\bar{\mathcal{F}}_+$ and the number of pairs
$\left ( \frac{\epsilon}{2} - \frac{3 M_{\mathcal{F}}}{N} \right )$-shattered
by $\bar{\mathcal{F}}_-$.
This implies that $\mbox{shat} \left ( r \right ) \geqslant
2 \cdot \mbox{shat} \left ( 
\left \lceil \frac{r}{N^{p+2}} \right \rceil \right )$.
Since it has been proved that
$\mbox{shat} \left ( 2 \right ) \geqslant 1$, by induction, for all
$u \in \mathbb{N}$ satisfying
$2 N^{\left ( p+2 \right ) u} \leqslant 
\mathcal{M} \left ( \epsilon, \mathcal{F}, d_{p, \mathbf{t}_n} \right )$,
$\mbox{shat} \left ( 2 N^{\left ( p+2 \right ) u} \right ) 
\geqslant 2^u$.
Suppose now that we can set $u = \left \lceil \log_2 \left ( K \right )
\right \rceil$. We then obtain
\begin{align*}
\mbox{shat} \left ( 2
N^{\left ( p+2 \right ) \left \lceil \log_2 \left ( K \right ) \right \rceil}
\right )
& \geqslant \; 2^{\left \lceil \log_2 \left ( K \right ) \right \rceil} \\
& > \; K-1.
\end{align*}
However, the number of pairs $\left ( s_{\mathcal{T}^q}, \mathbf{b}_q \right )$
that can be
$\left ( \frac{\epsilon}{2} - \frac{3 M_{\mathcal{F}}}{N} \right )$-shattered 
is trivially
bounded from above by the total number of those pairs, i.e., $K-1$.
We have thus established by contradiction that
\begin{equation}
\label{eq:partial-1-Sauer-Shelah-L_p}
2 N^{\left ( p+2 \right ) \left \lceil \log_2 \left ( K \right ) \right \rceil}
> \mathcal{M} \left ( \epsilon, \mathcal{F}, d_{p, \mathbf{t}_n} \right ).
\end{equation}
A well-known computation
(see for instance the proof of Corollary~3.3 in \cite{MohRosTal12})
produces the following upper bound on $K$:
\begin{equation}
\label{eq:partial-2-Sauer-Shelah-L_p}
K \leqslant \left ( \frac{e \left ( N-1 \right ) n}{d} \right )^d.
\end{equation}
Substituting (\ref{eq:partial-2-Sauer-Shelah-L_p}) into
(\ref{eq:partial-1-Sauer-Shelah-L_p}) gives:
\begin{align*}
\mathcal{M} \left ( \epsilon, \mathcal{F}, d_{p, \mathbf{t}_n} \right )
& < \;
2 N^{\left ( p+2 \right ) \left \lceil 
\log_2 \left [ \left ( \frac{e \left ( N-1 \right ) n}{d} \right )^d
\right ] \right \rceil} \\
& < \;
2 N^{\left ( p+2 \right )
\log_2 \left [ 2 \left ( \frac{e \left ( N-1 \right ) n}{d} \right )^d
\right ]} \\
& < \; 2 \left [
2 \left ( \frac{e \left ( N-1 \right ) n}{d} 
\right )^d \right ]^{\left ( p+2 \right ) \log_2 \left ( N \right )},
\end{align*}
and the last inequality is precisely Inequality~(\ref{eq:Sauer-Shelah-L_p}).
\end{proof}

\bibliography{App}
\bibliographystyle{plain}

\end{document}